\setlist[itemize]{itemsep=0cm}
\setlist[enumerate]{itemsep=0cm}
\newcommand{\G}{\Sigma}
\newcommand{\CC}{\mathcal{C}}
\newcommand{\F}{\mathcal{F}}
\newcommand{\Gaug}{\mathrm{Gauge}}
\newcommand{\g}{\mathfrak{g}}
\newcommand{\h}{\mathfrak{h}}
\newcommand{\m}{\mathfrak{m}}
\newcommand{\z}{\mathfrak{z}}
\newcommand{\RR}{\mathbb{R}}
\newcommand{\tto}{\rightrightarrows}
\newcommand{\lto}{\leftarrow}
\newcommand{\BB}{\mathbf{X}}
\newtheorem{definition}{Definition}[section]
\newtheorem{theorem}[definition]{Theorem}
\newtheorem{proposition}[definition]{Proposition}
\newtheorem{lemma}[definition]{Lemma}
\newtheorem{corollary}[definition]{Corollary}
\newtheorem*{introtheorem}{Theorem}
\theoremstyle{definition}
\newenvironment{remark}
  {\pushQED{\qed}\remarkx}
  {\popQED\endremarkx}
\newenvironment{example}
  {\pushQED{\qed}\examplex}
  {\popQED\endexamplex}
\newenvironment{axiom}[1]{%
  \manualtheoreminner
}{\endmanualtheoreminner}
\newcommand{\dom}{\text{Dom}}
\DeclareMathOperator{\Bis}{Bis}
\DeclareMathOperator{\germ}{germ}
\DeclareMathOperator{\Germ}{Germ}
\DeclareMathOperator{\pr}{pr}
\DeclareMathOperator{\id}{id}
\DeclareMathOperator{\Fr}{Fr}
\DeclareMathOperator{\GL}{GL}
\begin{document}

\title{Pseudogroups of symmetries and Morita equivalences}

\author{Luca Accornero
%\address{Depart. of Math., Utrecht University, The Netherlands}
%\email{l.accornero@uu.nl}
\footnote{Departement Wiskunde, KU Leuven, Belgium, \url{luca.accornero@kuleuven.be} }
\and
Francesco Cattafi \footnote{Institut f\"ur Mathematik, Julius-Maximilians-Universit\"at W\"urzburg, Germany, \url{francesco.cattafi@mathematik.uni-wuerzburg.de} }}
%\author{Francesco Cattafi}
%\address{Institut f\"ur Mathematik, Julius-Maximilians-Universit\"at W\"urzburg, Germany}
%\email{francesco.cattafi@mathematik.uni-wuerzburg.de}

\maketitle

\begin{abstract}

This work is a spin-off of an on-going programme \cite{INPROGRESS} which aims at revisiting the original studies of Lie and Cartan on pseudogroups and geometric structures from a modern perspective.

Within the framework of Lie groupoids equipped with a special multiplicative form -- called Pfaffian groupoids -- we focus on principal bibundles and Morita equivalences. In particular, we discuss in details the notion of Pfaffian Morita equivalence, its relation to the gauge construction in the Pfaffian setting, and its interactions with principal actions. 

We briefly present some examples and applications to transitive pseudogroups of symmetries, which we explored in great detail in \cite{AccorneroCattafi2}.
%We lay the ground to some examples and applications to transitive pseudogroups of symmetries, which are presented briefly here and are explored in great detail in a separate work.
\end{abstract}

\begin{center}
 \textbf{MSC2020}: 58H05, 58A10, 58A15, 58A20
\end{center}

%58H05 Pseudogroups and differentiable groupoids

%53Cxx Global differential geometry
%53C10 G-structures
%53C15 General geometric structures on manifolds (almost complex, almost product structures, etc.)
%53D17 Poisson manifolds; Poisson groupoids and algebroids

%58A10 Differential forms in global analysis
%58A15 Exterior differential systems (Cartan theory)
%58A17 Pfaffian systems
%58A20 Jets in global analysis

\begin{center}
 \textbf{Keywords}: Lie groupoids, pseudogroups, symmetries, Morita equivalence, multiplicative forms, principal bundles
\end{center}

\tableofcontents

\section*{Introduction}

\addcontentsline{toc}{section}{Introduction}% adds the bibliography in the table of contents

The main subject of this paper is an enhancement of the classical notion of Morita equivalence between Lie groupoids. To be precise, we consider groupoids equipped with a vector-valued 1-form, called a {\it Pfaffian form}, which is compatible with the multiplication and originates from the theory of PDEs. Applications are discussed in \cite{AccorneroCattafi2}.

Both the point of view and the tools are part of an on-going programme conceived by Marius Crainic, which aims at revisiting the original studies of Lie \cite{LieEngel88TrGroupAll} and Cartan \cite{CARTANINFINITEGROUPS} on pseudogroups and geometric structures from a modern perspective. The first output of such long-term programme was the PhD thesis of Mar\'ia Amelia Salazar in 2013 \cite{MARIA}, followed by the PhD theses of Ori Yudilevich \cite{ORI} and of the two authors \cites{FRANCESCO, LUCA}. All the main outcomes will be collected in the forthcoming monograph \cite{INPROGRESS}, where also this paper will be embedded.

\subsection*{Motivation and background}
Our driving motivation is that groupoids equipped with a Pfaffian form -- called {\it Pfaffian groupoids} -- underlie the study of symmetries of geometric structures on manifolds. To be more precise, this paper fits the on-going programme of understanding and generalising Sophus Lie's~\cite{LieEngel88TrGroupAll} and \'Elie Cartan's~\cite{CARTANINFINITEGROUPS} seminal work on symmetries in differential geometries by means of Lie groupoid theory. Lie's and Cartan's work is about what nowadays are called {\it pseudogroups}. Loosely speaking, a pseudogroup on a manifold $\BB$ is a set of transformations 
\[
\Gamma \subset {\rm Diff}_{\rm loc}(\BB) :=\{\varphi \in C^\infty(U,V):\ U,V\subset \BB \text{ open subsets of } \BB,\ \varphi \text{ is a diffeomorphism}\}
\]
with the minimal properties expected from a set of symmetries, namely:
\begin{itemize}
\item it contains the identity;
\item it is closed under group-like operations (composition and inversion);
\item it is closed under sheaf-like operations (restriction and continuation/gluing).
\end{itemize}
Lie and Cartan focused on the case when the pseudogroup is {\it transitive} -- that is, the geometric structure is ``homogeneous'': any two points are connected by a symmetry of the structure. Apart for unraveling the structure behind their work, we are also interested in removing this ``homogeneity'' assumption. Our approach, which explicitly builds on previous work~\cite{MARIA,ORI,FRANCESCO,LUCA}, is designed to deal precisely with this task. Let us describe it briefly.

There is a deep relation between Lie's and Cartan's work on symmetries and {\it groupoids}. This fact was first made explicit by Charles Ehresmann~\cite{Ehresmann53}; later fundamental insights came from André Haefliger's work on foliations~\cite{HAEFLIGERGAMMASTRUCTURES}. The key observation is that any pseudogroup $\Gamma$ comes with a canonical tower of objects associated to it: the tower of jets of elements in $\Gamma$
\[
\dots \to J^{k+1}\Gamma \to J^k\Gamma \to \dots \to \BB\times \BB\to \BB,
\]
where
\[
J^k\Gamma := \{j^k_x\varphi \in J^k(\BB,\BB):\ \varphi\in \Gamma,\ x\in \dom(\varphi)\}.
\]
Since $\Gamma$ is a pseudogroup, all the $J^k\Gamma$'s are canonically groupoids. Intuitively, $J^k\Gamma$ represents the ``order-$k$ constraints'' that a local transformation of $\BB$ needs to meet in order to be an element of $\Gamma$. It is then natural to focus on the case where thess constraints are represented by regular PDEs. Geometrically, this corresponds to require that $J^k\Gamma$ cuts out an embedded submanifold in the jet manifold $J^k(\BB,\BB)$. In such a situation, the jet groupouids are {\it Lie} groupoids and $\Gamma$ is called a {\it Lie pseudogroup}.

However, the groupoid structure on $J^k\Gamma$ does not represent the full picture. In fact, as a submanifold of the jet manifold $J^k(\BB,\BB)$, $J^k\Gamma$ also comes equipped with the restriction of the canonical {\it Cartan form} on $J^k(\BB,\BB)$. Such form is a very well known object~\cite{SAUNDERS, Vinogradovetal99}, and underlies the geometric approach to the study of PDEs. What was realised only quite recently, see~\cite{MARIA}, is that the groupoid structure and the ``PDE'' structure, represented by the Cartan form, interact and are compatible with each other. This key fact led to the introduction of Pfaffian groupoids, as an abstract object whose main incarnation are jet groupoids of pseudogroup.

So far, the usage of Pfaffian groupoids as an abstract tool to understand symmetries has proved to be advantageous in many ways. Pfaffian groupoids provide a clean abstract framework where one can rigorously investigate the structure underlying symmetries, both from the local and the infinitesimal point of view~\cite{CrainicSalazarStruchiner15, INPROGRESS, MARIA}. Such framework has, in turn, been used for a variety of tasks:
\begin{itemize}
 \item producing a version of Cartan's structure equations that hold in great generality -- including for non-transitive pseudogroups of symmetries~\cite{ORI, CRAINICYUDILEVICH};
 \item provide new insights on the classical geometric approach to PDEs~\cite{CATTMARIUSMARIA};
 \item investigate the integrability problem for geometric structures in differential geometry~\cite{FRANCESCO, INPROGRESS};
 \item discuss cohomological invariants associated to certain classes of geometric structures~\cite{LUCA, AccCr2020}.
\end{itemize}

The focus of this paper is on the interaction between the theory of Pfaffian groupoids and the classical notion of Morita equivalence of Lie groupoids. In loose terms, Morita equivalence is known to capture the notion of ``transverse geometry'' of a Lie groupoid~\cite{MATIAS, MOERDIJK}, in the sense that Morita equivalent groupoids exhibit isomorphic behaviours transversely to their orbits. A bit more conceptually, Morita equivalent groupoids are different presentations of the same {\it differentiable stack}~\cite{STACKS}. In this paper we limit ourselves to the ``hands-on'' approach to Morita equivalence, by means of {\it principal bibundles}~\cite{MOERDIJK}.

The fact that Morita equivalence plays a role when studying symmetries of geometric structures is already made clear in the PhD thesis of the second author~\cite{FRANCESCO}. The focus is on geometric structures admitting local models; that is, geometric structures modelled on $\mathbb{R}^n$ equipped with some additional geometry. From~\cite{FRANCESCO} it follows that the pseudogroup of symmetries of the ``model geometry'' on $\mathbb{R}^n$ and the pseudogroup of symmetries of the structure on the manifold are Morita equivalent; to be more precise, the groupoids of {\it germs} of their elements are Morita equivalent. This induces Morita equivalences of the corresponding jet groupoids; and such Morita equivalences are indeed compatible with the Pfaffian forms. The Morita equivalences of jet groupoids were used as the inspiration for the notion of {\it Pfaffian Morita equivalence}~\cite[Definition 5.4.1]{FRANCESCO}. Such notion matches the one presented in this paper -- Definition~\ref{def:Pfaffain-Morita-equivalence} -- but was not thoroughly investigated. Three key questions, that this work answers, are the following:
\begin{itemize}
\item how much of the geometry of a Pfaffian groupoid is preserved under Pfaffian Morita equivalence?
\item can principal {\it Pfaffian} actions be ``transported'' across Pfaffian Morita equivalences?
\item what is the correct notion of {\it Pfaffian isotropy} -- which should provide a group-like object describing transitive Pfaffian groupoids, hence the transitive pseudogroups of Lie and Cartan?
\end{itemize}
The answer to these questions is not immediately clear even for jet groupoids of pseudogroups.

\subsection*{Structure of the paper and main contributions}

The paper assumes familiarity with the basics of Lie groupoid theory, as well as jet bundles and their Cartan distribution(s). For convenience of the readers not experienced with such topics, we collected the material that we need in an appendix. We feel safe to say that the ``initiated'' reader can skip most of it; we mention here some remarks especially relevant for the study of pseudogroups, concerning étale groupoids, Hausdorff issues, and transitivity:~\ref{rmk:transitivity_and_second_countability},~\ref{rmk:transitivity_of_pseudo}
and~\ref{rmk:transitivity_of_germ_groupoid}.

We start by providing a rather extensive introductory section, namely Section~\ref{sec:intro_section}, concerning (Lie) pseudogroups and their associated groupoids. In particular, we discuss the proof of Theorem~\ref{thm:Haef_corresp}, usually attributed to Haefliger, which is at the core of the groupoid approach to pseudogroup.

In Section~\ref{sec:Pfaffian-language} we recall and describe Pfaffian groupoids and principal Pfaffian bundles. Most of the theory discussed appears in the PhD theses~\cite{MARIA,FRANCESCO}, but has not yet appeared elsewhere in the literature; a thorough monograph on Pfaffian groupoids and their applications to geometric structures is currently in preparation~\cite{INPROGRESS}. In Section~\ref{sec:Pfaffian-language} we pay special attention to the case of {\it non-full Pfaffian groups}, which are relevant when describing isotropy groups of Pfaffian groupoids; most of this material first appeared in the PhD thesis of the first author~\cite[Chapter 1]{LUCA}. In particular, here we study principal Pfaffian bundles for an action of a Pfaffian group, and we prove that they are precisely the {\it Cartan bundles} introduced in~\cite{FRANCESCOPAPER}.

Section~\ref{sec:PME} contains most of the novel material. First, we introduce the notions of principal Pfaffian bibundle and Pfaffian Morita equivalence, Definitions~\ref{def:princ-Pfaff-bibundle} and~\ref{def:Pfaffain-Morita-equivalence}. Even though \ref{def:Pfaffain-Morita-equivalence} is clearly equivalent to~\cite[Definition 5.4.1]{FRANCESCO}, we provide a different characterisation in terms of the {\it gauge construction}. It is well known that a Lie groupoid $\Sigma\tto \BB$ is Morita equivalent to a groupoid $\Sigma'\tto \BB'$ if and only if $\Sigma'\tto \BB'$ is isomorphic to the gauge groupoid of $\Sigma\tto \BB$ with respect to some principal $\Sigma$-bundle. A Pfaffian version of the gauge construction is introduced in~\cite[Proposition 5.4.3]{FRANCESCO}. Here we use it to prove Theorem~\ref{prp:PME_is_gauge_construction}:
\begin{introtheorem}
Two Pfaffian groupoids $(\Sigma,\omega,E)$ and $(\Sigma',\omega',E')$ are Pfaffian Morita equivalent if and only if $(\Sigma',\omega',E')$ is Pfaffian isomorphic to the Pfaffian gauge of $(\Sigma,\omega,E)$ with respect to some Pfaffian principal $(\Sigma,\omega,E)$-bundle.
\end{introtheorem}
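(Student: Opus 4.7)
The plan is to mirror, step by step, the classical proof that Morita equivalence between Lie groupoids is the same as being (isomorphic to) a gauge groupoid, lifting each step to the Pfaffian setting. The classical result has two halves: first, for any principal $\Sigma$-bundle $P \to \BB'$, the manifold $P$ itself is a $(\Sigma, \Gaug(P))$-bibundle; second, any Morita bibundle $P$ between $\Sigma$ and $\Sigma'$ canonically identifies $\Sigma'$ with the gauge groupoid of $P$ viewed as a left principal $\Sigma$-bundle. I would run both halves again, checking that the Pfaffian form and the ``$E$-bundle'' on each side are respected.

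For the implication ($\Leftarrow$), start with a Pfaffian principal $(\Sigma,\omega,E)$-bundle $P \to \BB'$ and let $(\Sigma',\omega',E')$ be the Pfaffian gauge produced by the construction recalled from~\cite{FRANCESCO}. The tautological right $\Sigma'$-action on $P$ makes $P$ into a $(\Sigma,\Sigma')$-bibundle on the nose; it suffices to verify that with its original Pfaffian datum $(\omega_P,E_P)$, the right action is again Pfaffian principal. But by construction $\omega'$ is characterised as the descent of $\omega_P$ along the left $\Sigma$-quotient $P \times_{\BB'} P \to \Sigma'$, so the compatibility identity linking $\omega_P$ and $\omega'$ across the action map is built into the definition. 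This makes $P$ a principal Pfaffian bibundle in the sense of Definition~\ref{def:princ-Pfaff-bibundle}, hence a Pfaffian Morita equivalence in the sense of Definition~\ref{def:Pfaffain-Morita-equivalence}.

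For ($\Rightarrow$), suppose a Pfaffian principal bibundle $P$ witnesses the Morita equivalence between $(\Sigma,\omega,E)$ and $(\Sigma',\omega',E')$. Forgetting the right action, $P$ becomes a Pfaffian principal $(\Sigma,\omega,E)$-bundle over $\BB'$, so the Pfaffian gauge construction yields a Pfaffian groupoid $(\Gaug(P),\omega_{\mathrm{gauge}},E_{\mathrm{gauge}})$. The classical isomorphism sending $g' \in \Sigma'$ to the pair $[p,p\cdot g']$ (for any $p$ in the appropriate fibre) is already a Lie groupoid isomorphism over $\BB'$; I would promote it to a Pfaffian isomorphism by observing that both $\omega'$ and $\omega_{\mathrm{gauge}}$ are forced, by the Pfaffian compatibility of the bibundle, to be the \emph{unique} descent of $\omega_P$ to the space $P\times_{\BB'}P$ modulo the diagonal left $\Sigma$-action, and similarly for the corresponding $E$-bundles. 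Hence the isomorphism intertwines $\omega'$ with $\omega_{\mathrm{gauge}}$ and $E'$ with $E_{\mathrm{gauge}}$.

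The main obstacle, hidden behind the word ``descent'' above, is to carry out the uniqueness-of-descent argument rigorously. Concretely, one has to analyse the short exact sequence relating the tangent spaces of $P$, $\Sigma$ and $\Sigma'$ coming from the bibundle structure, and use it to show that the Pfaffian form and the corank bundle on $\Sigma'$ are completely determined by their pullback to $P$ along the action map, together with the multiplicativity of $\omega$. Once this descent lemma is in place, it simultaneously identifies the Pfaffian form on $\Sigma'$ with the one on $\Gaug(P)$ and supplies the verification needed in the ($\Leftarrow$) direction, so both implications follow from a single technical fact.
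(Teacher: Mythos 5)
Your proposal is correct and follows essentially the same route as the paper: the backward implication rests on the fact that the Pfaffian gauge construction makes $(P,\theta)$ a principal Pfaffian bibundle by construction (plus Lemma~\ref{lemma:morphism_which_are_PME} and transitivity of Pfaffian Morita equivalence to absorb the Pfaffian isomorphism), and the forward implication promotes the classical isomorphism $\Sigma'\cong\Gaug(P)$ by checking that $\omega'$ and $\omega_{\rm gauge}$ have the same pullback to $P\rtimes\Sigma'\cong P\times_{\mu}P$. Your ``uniqueness of descent'' lemma is precisely what the paper establishes via the multiplicativity equation for the right action (Remark~\ref{remark:mullt-from-right}) together with Lemma~\ref{prp:Phi_descends} for the coefficient bundles.
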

In Proposition~\ref{prp:Pfaffian-transverse-geometry} we discuss which geometric properties of a Pfaffian groupoid are preserved under Pfaffian Morita equivalence. We derive as a consequence that two Pfaffian groups are Pfaffian Morita equivalent if and only if they are Pfaffian isomorphic. We proceed to state and prove Theorem~\ref{prop:Pfaffian_principal_category}, one of the core theorems of the paper, especially for applications to jet groupoids and geometric structures:
\begin{introtheorem}
Let $(\Sigma,\omega,E)$ and $(\Sigma',\omega',E')$ be Pfaffian groupoids. The categories of principal Pfaffian $(\Sigma,\omega,E)$-bundles and of principal Pfaffian $(\Sigma',\omega',E')$-bundles are equivalent.
\end{introtheorem}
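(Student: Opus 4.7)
The plan is to build the equivalence by transport along a Pfaffian principal bibundle $(P,\omega_P,E_P)$ realising the Pfaffian Morita equivalence. This mimics the classical construction for Lie groupoids, where to a right principal $\Sigma$-bundle $Q$ one associates the quotient $F(Q) := (P \times_\BB Q)/\Sigma$, which is naturally a right principal $\Sigma'$-bundle; the inverse functor is obtained analogously with the bibundle read in the opposite direction.

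First I would define the action on objects. Given a principal Pfaffian $(\Sigma,\omega,E)$-bundle $(Q,\omega_Q,E_Q)$, one pulls back $\omega_P$ and $\omega_Q$ along the two projections from $P \times_\BB Q$. The key point is that the natural combination of these pullbacks, dictated by the symbol maps, is basic with respect to the diagonal $\Sigma$-action -- this is exactly the compatibility encoded in the definitions of Pfaffian bibundle and of principal Pfaffian bundle -- and therefore descends to a form $\omega_{F(Q)}$ on the quotient, valued in a bundle $E_{F(Q)}$ obtained as the analogous quotient of $E_P$ and $E_Q$. Rather than verifying the axioms by brute force, I would invoke Theorem~\ref{prp:PME_is_gauge_construction} to reduce to the case where $(\Sigma',\omega',E')$ is the Pfaffian gauge of $(\Sigma,\omega,E)$ along $P$; in that picture $F(Q)$ together with its Pfaffian data is simply the output of the Pfaffian gauge construction applied to $P \times_\BB Q$.

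The action on morphisms is then formal: an arrow $f\colon Q_1 \to Q_2$ of principal Pfaffian bundles induces $\id_P \times f$, which is $\Sigma$-equivariant and intertwines all the relevant forms by hypothesis; hence it descends to $F(f)$. The inverse functor $G$ is obtained by swapping the roles of $\Sigma$ and $\Sigma'$ in the construction, and the natural isomorphisms $G \circ F \simeq \id$ and $F \circ G \simeq \id$ follow from the classical identifications of the double biquotient with the original bundle, which are automatically compatible with the Pfaffian forms once the object-level construction is shown to be well-defined.

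The principal obstacle is precisely this well-definedness: checking that the combined form on $P \times_\BB Q$ is basic for the diagonal $\Sigma$-action with the expected symbol, and that the descended datum $(F(Q),\omega_{F(Q)},E_{F(Q)})$ meets every axiom of a principal Pfaffian $(\Sigma',\omega',E')$-bundle. The delicate part is the bookkeeping of the four symbol bundles $E, E', E_P, E_Q$ and the verification that the induced $\Sigma'$-action on $F(Q)$ is strong with respect to $\omega_{F(Q)}$. Once this is in place, functoriality, the construction of $G$, and the naturality of the unit and counit proceed along the classical patterns familiar from non-Pfaffian Morita theory, and the materials developed in Section~\ref{sec:Pfaffian-language} on principal Pfaffian bundles supply all the local tools needed for the verifications.
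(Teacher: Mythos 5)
Your overall strategy --- transporting a principal Pfaffian bundle across the bibundle by forming $P\times_{\BB}Q$, descending a combined form to the quotient, and inverting the construction by reading the bibundle backwards --- is exactly the paper's. The problem is that the step you yourself flag as ``the principal obstacle'' is the entire mathematical content of the theorem, and the device you propose to discharge it does not do so. The Pfaffian gauge construction (Proposition~\ref{prop:Pfaffian_Gauge_construction}) takes a principal $(\Sigma,\omega,E)$-bundle $P\to M$ and outputs a Pfaffian \emph{groupoid} structure on $(P\times_M P)/\Sigma$ over $M$. Applied to $P\times_{\BB}Q$, viewed as a principal $\Sigma$-bundle over $F(Q)$, it would produce a Pfaffian groupoid over $F(Q)$ --- not the structure of a principal $(\Sigma',\omega',E')$-bundle on $F(Q)$, which is what is needed. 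Theorem~\ref{prp:PME_is_gauge_construction} does let you assume $\Sigma'\cong\Gaug(P)$ with its gauge Pfaffian data, but that identification hands you nothing about the multiplicativity of the descended form for the induced action on $F(Q)$, nor about the Pfaffian condition (equality of $\ker(\theta_2)\cap\ker(d\tau_2)$ and $\ker(\theta_2)\cap\ker(d\pi_2)$ on the quotient); these must still be verified directly, and they are precisely what the paper's proof spends its length on.

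Concretely, the paper works with $\widehat{\theta}=\pr_2^*\theta-\pr_1^*\theta_1$ on $P_1\times_{\BB_1}P$. The relative sign is essential: horizontality for the diagonal $\Sigma_1$-action holds because multiplicativity of $\theta$ and of $\theta_1$ produces the same $\omega_1$-contribution in each term, which then cancels. One must also use $\Phi$ to re-coefficient both summands in $\mu_2^*E_2$, check equivariance with respect to a representation extended beyond the normal directions by means of the Pfaffian forms, compute multiplicativity of the descended form against the $\Sigma_2$-action (where the twisted multiplicativity equation of Remark~\ref{remark:mullt-from-right} and the representation property of $\Phi$ enter), and trace the two kernel intersections back through the quotient projection. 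Your ``natural combination of the pullbacks, dictated by the symbol maps'' specifies neither the sign nor the coefficient identification, and the functoriality and unit/counit claims all presuppose the object-level construction you have deferred. As written, this is a correct plan rather than a proof.
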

Then, we prove Theorem~\ref{prop_transitive_Pfaffian_Morita_equivalence}, which can be regarded as a characterisation of Pfaffian isotropy:
\begin{introtheorem}
Let $(\Sigma,\omega,E)$ be a (full) transitive Pfaffian groupoid. Its isotropy group possesses a canonical structure of (not necessarily full) Pfaffian group that is Pfaffian Morita equivalent to $(\Sigma,\omega,E)$.
\end{introtheorem}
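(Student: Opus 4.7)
The plan is to realise the isotropy $\Sigma_{x_0} := s^{-1}(x_0)\cap t^{-1}(x_0)$ at a chosen basepoint $x_0\in\BB$ as the ``Pfaffian structure group'' of a principal Pfaffian bundle whose Pfaffian gauge recovers $(\Sigma,\omega,E)$, and then apply the first Introtheorem (characterising Pfaffian Morita equivalence via the gauge construction) to conclude.

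First, I would equip $\Sigma_{x_0}$ with the restriction $\omega_{x_0} := \iota^{*}\omega$ along the inclusion $\iota\colon \Sigma_{x_0}\hookrightarrow \Sigma$, with values in the vector space $E_{x_0}$. Transitivity of $\Sigma$, together with the fact that $\Sigma$ is a Lie groupoid, ensures that $\Sigma_{x_0}$ is an embedded Lie subgroup; since both legs of the groupoid multiplication restricted to $\Sigma_{x_0}\times\Sigma_{x_0}$ land over $x_0$, multiplicativity of $\omega$ transfers to multiplicativity of $\omega_{x_0}$, giving $(\Sigma_{x_0},\omega_{x_0},E_{x_0})$ the structure of a Pfaffian group. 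One cannot expect this to be full: the rank of $\omega_{x_0}$ at the identity may drop strictly below that of $\omega$, and this is precisely what is accommodated by the machinery of non-full Pfaffian groups introduced in Section~\ref{sec:Pfaffian-language}, which is where the ``not necessarily full'' clause originates.

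Next, I would consider the source fibre $P := s^{-1}(x_0)$ with projection $t\colon P\to \BB$, which in the plain Lie groupoid world is a right-principal $\Sigma_{x_0}$-bundle (by right multiplication) and simultaneously a left-principal $\Sigma$-bundle along $t$; this bibundle is the one that realises the usual Morita equivalence between $\Sigma$ and $\Sigma_{x_0}$ in the transitive case. I would endow $P$ with the restriction $\omega_P := \iota_P^{*}\omega$ of the Pfaffian form and verify, using the characterisations from Section~\ref{sec:Pfaffian-language} (especially the comparison between principal Pfaffian bundles for Pfaffian groups and Cartan bundles), that $(P,\omega_P)$ is indeed a principal Pfaffian $(\Sigma_{x_0},\omega_{x_0},E_{x_0})$-bundle, and that the $\Sigma$-action together with $\omega$ makes it into a principal Pfaffian bibundle between $(\Sigma,\omega,E)$ and $(\Sigma_{x_0},\omega_{x_0},E_{x_0})$; compatibility with the $\Sigma$-action is essentially a restatement of multiplicativity of $\omega$.

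Finally, I would compute the Pfaffian gauge of $(\Sigma_{x_0},\omega_{x_0},E_{x_0})$ along $P$. At the level of underlying Lie groupoids one has the well-known identification $P\times_{\Sigma_{x_0}} P\xrightarrow{\sim}\Sigma$ sending $[p,q]\mapsto p\cdot q^{-1}$, so the content is to check that under this isomorphism the Pfaffian gauge form and coefficient bundle provided by the Pfaffian gauge construction of Section~\ref{sec:PME} pull back to $\omega$ and $E$ respectively. Once this is done, the first Introtheorem immediately yields the Pfaffian Morita equivalence. The main obstacle I foresee is the bookkeeping of coefficients: $E$ lives over $\BB$ while $E_{x_0}$ is a single fibre, so one has to trace how the gauge construction promotes $E_{x_0}$ back to a vector bundle over $\BB$ via $P$ and verify that the resulting bundle is canonically isomorphic to $E$, and furthermore that this identification intertwines $\omega_P$ with $\omega$ rather than with some genuinely smaller subform — a check in which the (possible) non-fullness of $\omega_{x_0}$ must be handled with care so that no information is lost in the gauge reconstruction.
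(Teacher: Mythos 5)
Your proposal is correct and follows essentially the same route as the paper: the Pfaffian Morita equivalence is realised by the $s$-fibre $s^{-1}(x_0)$ equipped with the restriction of $\omega$, acted on by $(\Sigma,\omega,E)$ from the left and by the isotropy Pfaffian group (whose form is $l\circ\omega_{\rm MC}$ by Lemma~\ref{lemma:Pfaffian_and_MC}) from the right, with the coefficient identification $t^*E\to s^{-1}(x_0)\times E_{x_0}$, $(g,\alpha)\mapsto(g,g^{-1}\cdot\alpha)$, which the paper writes down explicitly and you only gesture at in your final paragraph. Your closing step through the Pfaffian gauge construction and Theorem~\ref{prp:PME_is_gauge_construction} is superfluous: once you have verified in your second step that the triple is a principal Pfaffian bibundle, Definition~\ref{def:Pfaffain-Morita-equivalence} already gives the Pfaffian Morita equivalence, which is exactly how the paper concludes.
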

In the last subsection, we discuss applications of Theorem~\ref{prop:Pfaffian_principal_category} and Theorem~\ref{prop_transitive_Pfaffian_Morita_equivalence} to transitive pseudogroups. This connects our approach to existing ones that focus on the transitive case, e.g.~\cite{GUILLEMINSTERNBERG, SingerSternberg65}, but also~\cite{SHARPE}. In fact, it shows that the Cartan bundles from Definition \ref{def:Cartan-bundle} can be used as a model for transitive differential geometry, as they encompass all the geometries arising from transitive Lie pseudogroups. For instance, we recover $G$-structures \cite{STERNBERG} and Cartan geometries \cite{SHARPE} as principal Pfaffian bundles. We explored these insights in great detail in~\cite{AccorneroCattafi2}.

\subsection*{Notations and conventions}
Throughout the paper, we use the notation $\BB$ for the spaces of objects of (Lie) groupoids and for the spaces/manifolds over which pseudogroups are defined. Base spaces of principal bundles are denoted by $M$, and principal bundles themselves are denoted by $P$.

We use the notation $\Sigma\tto \BB$ for arbitrary Lie groupoids, and $\mathcal{G}\tto \BB$ for {\it étale} groupoids (this distinction is relevant only in the first section). The Lie algebroid of a Lie groupoid is denoted by ${\rm Lie}(\Sigma)\to \BB$ or, if there is no risk of confusion, simply by $A\to \BB$. The unit of a (Lie) group $G$ is denoted by $e$, and its Maurer-Cartan form by $\omega_{\rm MC} \in \Omega^1 (G,\g)$.

If $V$ is a vector space, we write $ \Omega^k(\BB, V)$ for the space $\Omega^k(\BB, \BB\times V)$ of differential forms on $\BB$ valued in the {\it trivial} vector bundle $\BB\times V \to \BB$.

The canonical Cartan form on the jet prolongation $J^kY$ of a submersion $Y\to X$ is denoted by $\omega^k$; the same notation is used for the restriction of $\omega^k$ to a submanifold of $J^kY$. Similarly, when there is no danger of confusion, we use $\pr$ to denote the canonical projection from $J^kY$ to $J^hY$, whenever $h<k$. 

All group(oid) actions are considered from the left and all manifolds and maps are smooth, unless explicitly stated otherwise.

\subsection*{Acknowledgements}
Both the authors would like to thank Marius Crainic for starting the long-term project of which this paper is part, for several insightful discussions, and for the support during their PhD years.

The first author was partially supported by the NWO through the Utrecht Geometry Center graduate programme (The Netherlands), and by the FWO-FNRS under EOS project G0I2222N (Belgium).
 
The second author was partially supported by the FWO-FNRS under EOS project G0H4518N (Belgium), by the ESI under the Junior Research Fellowship ``Cartan geometries via Pfaffian groupoids'' (Austria), by the FWF under Mozart Grant I 5015-N (Austria) and by the DFG under Walter Benjamin project 460397678 (Germany), and is a member of the GNSAGA - INdAM (Italy).

\section{From Lie pseudogroups to Pfaffian groupoids}\label{sec:intro_section}

\subsection{(Pseudo)groups of symmetries}\label{sec:pseudogroups}

The definition below is the modern formulation of Lie's definition of "transformation group" of symmetries~\cite{LieEngel88TrGroupAll}.
\begin{definition}\label{def:pseudogroups}
A {\bf pseudogroup} $\Gamma$ on a manifold $\BB$ is a subset of the set of smooth embeddings of open sets of $\BB$ into $\BB$ which is 
\begin{itemize}
\item closed under the group-like operations, i.e.\ composition (when defined) and inversion, and containing the identity $\id_\BB:\BB\to \BB$;
\item local, i.e.\ such that restrictions of elements in $\Gamma$ belong to $\Gamma$;
\item closed under gluings, i.e.\ if $\{V_k\}_{k\in K}$ covers an open $U$ and $\varphi_k\in \Gamma$ are the restrictions of an embedding $\varphi:U\hookrightarrow \BB$ to the opens $V_k$'s, then $\varphi\in \Gamma$.
\end{itemize}
\end{definition}

%\begin{definition}\label{def:transitive-orbits}
%Let $\Gamma$ be a pseudogroup on a manifold $\BB$. The {\bf orbit} $\mathcal{O}_x$ of $x\in \BB$ is the set of elements $y\in \BB$ for which there exists $\varphi\in \Gamma$ such that $\varphi(x)=y$.

%A pseudogroup on $\BB$ is called {\bf transitive}  if  for all $x,y\in \BB$ there is $\varphi\in \Gamma$ such that $\varphi(x)=y$, i.e.\ the only orbit is $\BB$ itself. 
%\end{definition}

The three defining properties of a pseudogroup listed above are the minimal ones to expect from sets of symmetries of geometric structures on manifolds -- Lie's idea of ``transformation group''.

\begin{example}
Let $\BB$ be a manifold. The set ${\rm Diff}_{\rm loc}(\BB)$ of diffeomorphisms defined on open subsets of $\BB$ is a pseudogroup. We will sometimes denote such a pseudogroup by $\Gamma^\BB$.
\end{example}

\begin{example}[Symmetries of geometric structures]\label{ex:pseudo_as_sym}
Let $M$ be a manifold equipped with some ``smooth geometric structure'' $\mathcal{S}$. The set
\[
\Gamma^M_\mathcal{S} = \{\varphi\in \Gamma^M:\ \varphi\ \text{is a ``local symmetry'' of } \mathcal{S}\}
\]
is a pseudogroup. To be more precise on the notion of symmetry, we provide a list of concrete examples below.

\begin{center}
\begin{tabular}{ |p{6.5cm}|p{7.5cm}| } 
 \hline
 \textbf{Geometric structure} & \textbf{Pseudogroup of symmetries} \\  [1.5ex]
 \hline\hline
 $(M,\omega)$ symplectic manifold & $\Gamma^M_{\omega}=\{\varphi\in \Gamma^M:\ \varphi^*\omega = \omega\}$ \\ [1.5ex]
 \hline
 $(M,J)$ complex manifold & $\Gamma^M_{J}=\{\varphi\in \Gamma^M:\ d\varphi\circ J = J\circ d\varphi\}$ \\ [1.5ex]
 \hline
 $(M,\mathcal{F})$ foliated manifold & $\Gamma^M_{\mathcal{F}}=\{\varphi\in \Gamma^M:\ \varphi \text{ sends leaves to leaves}\}$ \\ [1.5ex]
 \hline
 $(M,\alpha)$ co-orientable contact manifold & $\Gamma^M_{\alpha}=\{\varphi\in \Gamma^M:\ \varphi^*\alpha = \alpha\}$ \\ [1.5ex]
 \hline
 $(M,g)$ Riemannian manifold & $\Gamma^M_{ g}=\{\varphi\in \Gamma^M:\ \varphi_*g = g\}$ \\ [1.5ex]
 \hline
\end{tabular}
\end{center}
\end{example}

All the examples above except the last one share the following property: given two points in the manifold, there is an element of the pseudogroup sending one to the other. This is a simple consequence of the existence of local models for the geometric structures involved (for instance, the Darboux model for symplectic and contact structures). The last example is different: a general Riemannian metric does not have local models, and given two points of $M$ there might be no isometry sending one to the other. However, if the metric is flat, then this is surely true, again due to the fact that in such a situation the structure has a local  model: any point has a neighbourhood isometric to the Euclidean space.

The way to formalise this observation is through Definition~\ref{def:transitive-orbits} and Definition~\ref{def:Gamma-atlas} below.

\begin{definition}\label{def:transitive-orbits}
Let $\Gamma$ be a pseudogroup on a manifold $\BB$. The orbit $\mathcal{O}_x$ of $x\in \BB$ is the set of all points $y\in \BB$ for which there exists some $\varphi\in \Gamma$ such that $\varphi(x)=y$.

A pseudogroup on $\BB$ is called {\bf transitive} if, for all $x,y\in \BB$, there is some $\varphi\in \Gamma$ such that $\varphi(x)=y$, i.e.\ the only orbit of $\Gamma$ is $\BB$ itself. 
\end{definition}

All the pseudogroups of symmetries in Example \ref{ex:pseudo_as_sym} are transitive, with the exception of the pseudogroup of isometries of an arbitrary Riemannian metric.

%As for local models, they are encoded into the following definition.
\begin{definition}\label{def:Gamma-atlas}
Let $\Gamma$ be pseudogroup on $\BB$ and $M$ be a smooth manifold. A {\bf $\Gamma$-atlas} on $M$ is an atlas\footnote{An atlas on a manifold $M$ valued in another manifold $\BB$ is defined analogously to an ordinary atlas -- the only difference being that charts are $\BB$-valued.} whose transition functions are elements of $\Gamma$. A {\bf $\Gamma$-structure} $\mathcal{S}_\Gamma$ on $M$ is an equivalence class of $\Gamma$-atlases/a maximal $\Gamma$-atlas. 
\end{definition}

With the exception of Riemannian metrics,\footnote{Notice that flat Riemannian metrics are $\Gamma$-structures as well: they can be represented by atlases whose transition functions belong to the pseudogroup of isometries of the Euclidean metric.
} all the geometric structures considered in Example \ref{ex:pseudo_as_sym}
are $\Gamma$-structures, where $\Gamma$ is a suitable pseudogroup on $\mathbb{R}^n$: the pseudogroup of symplectomorphisms of the standard symplectic form, the pseudogroup of bi-holomorphisms of the standard complex structure, etc. Notice that the structure $\mathcal{S}_\Gamma$ on $M$ is completely determined by the $\Gamma$-atlas: the charts can be used to locally ``pull-back'' the structure from the Euclidean space to $M$, and the transition functions belonging to $\Gamma$ ensure that these local data glue together to define a structure on $M$.

\begin{remark}\label{rmk:pseudo_of_sym} 
The set of local symmetries of a $\Gamma$-structure $\mathcal{S}_\Gamma$ on $M$ is a pseudogroup, which we denote by $\Gamma^M_{\mathcal{S}_\Gamma}$.

However, let us stress that $\Gamma$-structures do not exhaust the class of geometric structures on manifolds whose local symmetries are pseudogroups. 
%In fact, any sufficiently reasonable notion of geometric structure will have the property that local symmetries form a pseudogroup.
For example, local isometries of a Riemannian manifolds form a pseudogroup. The same holds e.g.\ for symmetries of almost symplectic structures, almost complex structures and distributions (i.e.\ versions of the structures from Example~\ref{ex:pseudo_as_sym} that do not possess a local model).
\end{remark}

\subsection{Haefliger's correspondence}\label{sec:Haef_corr}
In this section we make extensive use of Lie groupoid theory. The definitions and properties we need, which are rather well known and estabilished, are briefly discussed in Appendix~\ref{app:Lie_gpds}. In particular, note that we impose Axiom \ref{axiom_moment_map} on principal groupoid bundles.

In the previous subsection we observed that the transitivity property of a pseudogroup $\Gamma$ on a manifold $M$ can be derived from the existence of local models. This is a manifestation of something deeper that lies at the core of our approach to pseudogroups. We describe it below.

\begin{theorem}\label{thm:Haef_corresp}
There is a one to one correspondence between pseudogroups and effective étale groupoids (see definitions~\ref{def:etale_gpd} and~\ref{def:effective_gpd}). Under such correspondence, if $\mathcal{G}_\Gamma$ denotes the groupoid associated to a pseudogroup $\Gamma$, the following facts are true.
\begin{enumerate}
\item $\Gamma$-structures $\mathcal{S}_\Gamma$ on a manifold $M$ correspond to principal $\mathcal{G}_\Gamma$-bundles over $M$ whose moment map is étale (see Definition~\ref{def:princ_gpd_bundle}).
\item Let $\Gamma^M_{\mathcal{S}_\Gamma}$ (cf.\  Remark~\ref{rmk:pseudo_of_sym}) be the pseudogroup of symmetries of a $\Gamma$-structure on $M$. Then $\mathcal{G}_\Gamma$ and $\mathcal{G}_{\Gamma^M_{\mathcal{S}_\Gamma}}$ are Morita equivalent (see Definition~\ref{def:classical-ME}).
\end{enumerate}
\end{theorem}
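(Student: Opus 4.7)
The overall strategy is to construct the correspondence explicitly via germs, and then to exploit it uniformly to package both the $\Gamma$-structure/bundle dictionary and the Morita equivalence.

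For the correspondence itself, given a pseudogroup $\Gamma$ on $\BB$ I define $\mathcal{G}_\Gamma$ as the set of germs $[\varphi]_x$ with $\varphi\in\Gamma$ and $x\in\dom(\varphi)$, endowed with $s([\varphi]_x)=x$, $t([\varphi]_x)=\varphi(x)$, and the sheaf topology whose basic opens are $U(\varphi)=\{[\varphi]_y : y\in\dom(\varphi)\}$. Closure of $\Gamma$ under group-like operations supplies multiplication and inversion; closure under restriction and gluing makes $s$ a local diffeomorphism with canonical local sections $y\mapsto[\varphi]_y$; effectiveness is built in. Conversely, from an effective étale groupoid $\mathcal{G}\tto\BB$ I set $\Gamma_\mathcal{G}:=\{t\circ\sigma : \sigma \text{ a smooth local section of } s\}$, whose pseudogroup axioms are immediate. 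The identity $\Gamma_{\mathcal{G}_\Gamma}=\Gamma$ is tautological, whereas $\mathcal{G}_{\Gamma_\mathcal{G}}\cong\mathcal{G}$ crucially uses effectiveness, which is exactly the requirement that distinct arrows have distinct germs of bisections.

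For item (i), given a $\Gamma$-atlas $\{\phi_\alpha:U_\alpha\to\BB\}$ on $M$, I form
\[
P \;=\; \bigl\{[\phi]_x \;:\; x\in M,\ \phi \text{ a chart of the maximal } \Gamma\text{-atlas around } x\bigr\}
\]
with $\pi:P\to M$, $[\phi]_x\mapsto x$ and moment map $\mu:P\to\BB$, $[\phi]_x\mapsto\phi(x)$; both maps are étale, since $\pi$ admits the local sections $x\mapsto[\phi_\alpha]_x$ and $\mu$ is étale chart-by-chart. The left $\mathcal{G}_\Gamma$-action $[\varphi]_{\phi(x)}\cdot[\phi]_x=[\varphi\circ\phi]_x$ has moment map $\mu$ and turns $P\to M$ into a principal $\mathcal{G}_\Gamma$-bundle: freeness, transitivity on $\pi$-fibres, and bijectivity of the double-fibre-product map are a direct translation of the transition-function property of the atlas. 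Conversely, smooth local sections of $\pi$ compose with $\mu$ to give $\BB$-valued charts of $M$ whose transition functions are precisely the $\mathcal{G}_\Gamma$-arrows witnessing the change of section, hence elements of $\Gamma$ by the correspondence.

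For item (ii), any $\psi\in\Gamma^M_{\mathcal{S}_\Gamma}$ acts on $P$ by $[\phi]_x\mapsto[\phi\circ\psi^{-1}]_{\psi(x)}$, which preserves membership in the maximal atlas precisely because $\psi$ is an $\mathcal{S}_\Gamma$-symmetry; passing to germs this upgrades to a smooth left action of $\mathcal{G}_{\Gamma^M_{\mathcal{S}_\Gamma}}$ with moment map $\pi$, which manifestly commutes with the $\mathcal{G}_\Gamma$-action. Principality on the $\mathcal{G}_\Gamma$-side was established above; principality on the other side is the assertion that any two germs of $\Gamma$-charts at a point with equal value in $\BB$ differ by a unique germ of local symmetry of $\mathcal{S}_\Gamma$, which is exactly the rigidity encoded in the notion of a $\Gamma$-structure (locally all charts pull back the "same" local model). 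Hence $P$ is biprincipal and witnesses the Morita equivalence. The main obstacle, beyond verifying that the sheaf topology on $\mathcal{G}_\Gamma$ yields a smooth étale groupoid with its usual non-Hausdorff subtleties addressed in the appendix, is precisely this last rigidity statement together with the effectiveness argument in the first step -- the two places where the geometric content of "pseudogroup" and "$\Gamma$-structure" really enters; everything else is a careful unpacking of definitions.
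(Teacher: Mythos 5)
Your construction is correct and rests on the same skeleton as the paper's: the germ groupoid $\mathcal{G}_\Gamma=\Germ(\Gamma)$ with the sheaf topology, inverse given by $\mathcal{G}\mapsto\{t\circ\sigma\}$, and the bundle of germs of charts (your $P$ is exactly the paper's $\Germ(\mathcal{A})$). Where you diverge is in the mechanism for the two numbered items. For (i), the paper does not verify principality by hand; it encodes a $\Gamma$-atlas as a $\Germ(\Gamma)$-valued cocycle $\sigma_{ij}(x)=\germ_{f_i(x)}(\varphi_{ij})$ and invokes the general cocycle--bundle correspondence of Mr\v{c}un, whereas you check freeness, fibrewise transitivity and the étale property of $\pi$ and $\mu$ directly on $P$; your route is more self-contained, the paper's is shorter and makes clear that the statement is a special case of a more general one (arbitrary étale groupoids, non-étale moment maps). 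For (ii), the paper simply applies the gauge construction to $\Germ(\mathcal{A})$, obtains the bibundle between $\Germ(\Gamma)$ and $\Gaug(\Germ(\mathcal{A}))$ for free from Theorem~\ref{thm:gauge_eq_morita}, and then identifies the pseudogroup of $\Gaug(\Germ(\mathcal{A}))$ with $\Gamma^M_{\mathcal{S}_\Gamma}$; you instead put the second action $[\phi]_x\mapsto[\phi\circ\psi^{-1}]_{\psi(x)}$ on $P$ by hand and prove biprincipality via the rigidity statement that two germs of charts with the same value in $\BB$ differ by a unique germ of a local symmetry (explicitly $\psi=(\phi')^{-1}\circ\phi$). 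These are the same fact seen from two sides -- your rigidity lemma is precisely the identification of the gauge groupoid $(P\times_M P)/\mathcal{G}_\Gamma$ with $\Germ(\Gamma^M_{\mathcal{S}_\Gamma})$ -- so the arguments are equivalent, with yours more explicit and the paper's better aligned with the bibundle formalism it reuses later. Two small points to tighten: in defining $\Gamma_{\mathcal{G}}$ you should take $t\circ\sigma$ for local \emph{bisections} rather than arbitrary local sections of $s$ (a section of an étale source map gives a local diffeomorphism $t\circ\sigma$ that need not be injective, while elements of a pseudogroup are embeddings), and when you say the transition-function property makes the double-fibre-product map bijective you are implicitly also using properness, which for an étale action on an étale bundle should at least be remarked upon.
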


The core ideas behind Theorem~\ref{thm:Haef_corresp} and its proof -- in particular, the correspondence between pseudogroups and groupoids and the fact that $\Gamma$-structures induce groupoid valued cocycles -- can be traced back to André Haefliger~\cite[section I.6]{HAEFLIGERGAMMASTRUCTURES}. Later authors -- in particular,  Moerdijk~\cite{IEKEHAEFLIGER} and Mr\v{c}un~\cite{JANEZ} -- significantly elaborated on such core ideas. The result is the theorem in the form seen above. See also~\cite{FRANCESCO,LUCA} for expositions with aims similar to those of this paper.

For ours and the reader's convenience, we recall the main steps of the proof.

\begin{proof}
The claimed correspondence is based on the germ construction: if $\Gamma$ is a pseudogroup on $\BB$, one sets
\[
\mathcal{G}_\Gamma := {\rm Germ}(\Gamma) = \{{\rm germ}_x(\varphi):\ \varphi\in \Gamma,\ x\in {\rm dom}(\varphi)\}.
\]
The set $\mathcal{G}_\Gamma$ is a groupoid over $\BB$ due to the fact that $\Gamma$ is a pseudogroup over $\BB$ (in particular, it is closed under group-like operations): the source and target maps are defined by 
\[
s: {\rm germ}_x(\varphi)\mapsto  x,\quad t: {\rm germ}_x(\varphi)\mapsto \varphi(x),
\]
the unit and inversion maps are defined by 
\[
u: x\to\mapsto  {\rm germ}_x({\rm id}_\BB),\quad i: {\rm germ}_x(\varphi)\mapsto {\rm germ}_{\varphi(x)}(\varphi^{-1}),
\]
and the multiplication is defined by
\[
{\rm germ}_{\varphi(x)}(\varphi_2)\cdot {\rm germ}_x(\varphi_1) = {\rm germ}_x(\varphi_2\circ \varphi_1),
\]
for all $\varphi$, $\varphi_1$ and $\varphi_2$ in $\Gamma$. Moreover, $\mathcal{G}_\Gamma$ can be equipped with the étale topology, which makes source and target into local homeomorphisms. 
%With such a topology, $\Germ(\Gamma)\tto \BB$ is an étale Lie groupoid -- an atlas on the arrow space $\Germ(\Gamma)$ can be obtained by pulling back an atlas of $\BB$ via the source map. 
Notice that local bisections of $\mathcal{G}_\Gamma$ correspond precisely to elements of $\Gamma$. Consequently, $\mathcal{G}_\Gamma$ is {\bf effective}, i.e.\ for $\sigma_1, \sigma_2 \in \Bis_{\rm loc}(\mathcal{G}_\Gamma)$,
\[
t\circ \sigma_1 = t\circ \sigma_2 \iff \sigma_1 = \sigma_2.
\]
The resulting map $\Gamma\to \mathcal{G}_\Gamma$ from pseudogroups to effective étale groupoids is a bijection: its inverse is the map assigning to an effective étale groupoid $\mathcal{G}$ the set
\[
\Gamma_\mathcal{G} := \{t \circ \sigma:\ \sigma\in {\rm Bis}_{\rm loc}(\Gamma)\} \subset {\rm Diff}_{\rm loc}(\BB),
\]
which can be easily checked to be a pseudogroup (see also Example \ref{effective_etale_pseudogroups}).

The rest of the proof is mostly based on work in~\cite{JANEZ}.

\begin{enumerate}
\item The key observation here is the following. Let $\{U_i\}_{i\in I}$ be an open cover of $M$ supporting a $\Gamma$-atlas. If $f_i:U_i\subset M\to \BB$ and $f_j:U_j\subset M\to \BB$ are charts of the $\Gamma$-atlas, the transition function
\[
 \varphi_{ij}\in \Gamma, \quad \varphi_{ij}: f_i(U_i\cap U_j)\to f_j(U_i\cap U_j),
\]
is equivalently encoded into the ${\rm Germ}(\Gamma)$-valued map
\[
\sigma_{ij}: x\mapsto {\rm germ}_{f_i(x)}({\varphi_{ij}}), \quad x\in U_i\cap U_j.
\]
In fact, charts themselves can be seen as ${\rm Germ}(\Gamma)$-valued maps whose image lies in the unit manifold; if $f_i:U_i\to \BB$ is a chart of a $\Gamma$-atlas then
\[
\sigma_{ii}: x\mapsto {\rm germ}_{f_i(x)}({{\rm id}_\BB}), \quad x\in U_i
\]
completely determines $f_i$. 

The properties defining a $\Gamma$-atlas imply (and are in fact, equivalent to) the following: the datum of the cover $\{U_i\}_{i\in I}$ with the maps $\{\sigma_{ij}\}_{i,j\in I}$ is a $\Germ(\Gamma)$-valued cocycle, i.e.:
\begin{itemize}
\item $\sigma_{ii}(x)$ is a unit for all $x\in U_i$;
\item $\sigma_{ij}(x)$ is an arrow from $\sigma_{ii}(x)$ to $\sigma_{jj}(x)$ for all $x\in U_i\cap U_j$;
\item the cocycle condition
\[
\sigma_{ik}(x) = \sigma_{jk}(x)\cdot \sigma_{ij}(x),\quad x\in U_i\cap U_j\cap U_k
\]
holds.
\end{itemize}
From this one sees that $\Gamma$-structures are in one to one correspondence with equivalence classes of ${\rm Germ}(\Gamma)$-cocycles such that the maps $\{\sigma_{ii}\}_{i\in I}$ are diffeomorphisms onto their image; the equivalence relation is the one identifying two cocycles when there is a larger cocycles containing both of them. Details are provided in~\cite{JANEZ} (see in particular Proposition {\rm I}.3.1), where the correspondence between such cocycles and principal bundles with étale moment map is also discussed. In the end, if $\mathcal{A}=(\{U_i\}_{i\in I},\{f_i\}_{i\in I})$ is a maximal $\Gamma$-atlas, the principal bundle associated to the $\Gamma$-structure is
\[
\xymatrix{
\Germ(\Gamma) \ar@<0.25pc>[dr] \ar@<-0.25pc>[dr]  & \ar@(dl, ul) &  \Germ(\mathcal{A}) \ar[dl]^{\mu}\ar[dr]^{\pi} &    \\
&\BB  & &  M,}
\]
where 
\[
\Germ(\mathcal{A}) := \{{\rm germ}_x(f_i):\ x\in U_i\subset M\},
\]
\[
\pi({\rm germ}_x(f_i)) := x, \quad \quad \mu({\rm germ}_x(f_i)) := f_i(x),
\]
and the action is induced by left composition.
\item This point follows quickly using the previous one. If $M$ carries a $\Gamma$-structure, we have a maximal $\Gamma$-atlas $\mathcal{A}$ and the principal bundle
\[
\xymatrix{
\Germ(\Gamma) \ar@<0.25pc>[dr] \ar@<-0.25pc>[dr]  & \ar@(dl, ul) &  \Germ(\mathcal{A}) \ar[dl]^{\mu}\ar[dr]^{\pi} &    \\
&\BB  & &  M.}
\]
By applying the gauge construction (cf.\ Theorem~\ref{thm:gauge_eq_morita} and the discussion above it), we get a groupoid $\Gaug(\Germ(\mathcal{A}))$ over $M$ and a principal bibundle
\[
\xymatrix{
\Germ(\Gamma) \ar@<0.25pc>[dr] \ar@<-0.25pc>[dr]  & \ar@(dl, ul) &  \Germ(\mathcal{A}) \ar[dl]^{\mu}\ar[dr]^{\pi} &\ar@(dr, ur) & \Gaug(\Germ(\mathcal{A}))\ar@<0.25pc>[dl] \ar@<-0.25pc>[dl]\\
&\BB  & &  M & .}
\]
It can be checked directly that the pseudogroup associated to $\Gaug(\Germ(\mathcal{A}))$ is precisely the pseudogroup of symmetries of the $\Gamma$-structure on $M$ defined by $\mathcal{A}$. \qedhere
\end{enumerate}
\end{proof}

\begin{remark}
A complete proof of point $(i)$ in Theorem~\ref{thm:Haef_corresp} can be read in~\cite[Proposition ${\rm I}.3.1$]{JANEZ}. However, we need to stress here that~\cite[Proposition ${\rm I}.3.1$]{JANEZ} is a more general statement: it deals with the case when the moment map is not necessarily étale, $\dim(M)$ is not necessarily equal to $\dim(\BB)$, and $\Germ(\Gamma)$ is replaced with a -- not necessarily effective -- étale groupoid $\mathcal{G}\tto \BB$. At such level of generality, the author proves that there is a bijective correspondence between isomorphism classes of principal $\mathcal{G}$-bundles over $M$ and {\it Haefliger $\mathcal{G}$-structures} on $M$. Defining Haefliger $\mathcal{G}$-structures goes beyond the scope of our discussion; $\Gamma$-structures are a particular case arising when $\mathcal{G}=\Germ(\Gamma)$ and $\dim(M)=\dim(\BB)$.
%The fact that Proposition ${\rm I}.3.1$ in~\cite{JANEZ} specialises to Proposition~\ref{prp:Gamma-struct-are-pb} above can be seen from the discussion preceding the statement in~\cite{JANEZ}. In particular, one sees that the moment map of any representative in the isomorphism class of principal $\Germ(\Gamma)$-bundles induced by a $\Gamma$-structure is étale.
\end{remark}

\subsection{The ``continuity'' condition}
Observe that all the pseudogroups from Example~\ref{ex:pseudo_as_sym} share a common feature: their elements arise as solutions of a certain system of PDEs. Furthermore, such PDEs are rather well behaved, since they arise from well-behaved geometric objects on manifolds (symplectic forms, complex structures, contact forms, Riemannian metrics). In general, pseudogroups can exhibit a wilder (and richer!) behaviour. 
\begin{example}\label{ex:gamma_coord}
On $\mathbb{R}^n$ with coordinates $(x^1,\dots,x^n)$ define
\[
\Gamma_{x^n} := \{\varphi\in {\rm Diff}_{\rm loc}(\mathbb{R}^n):\ x^n\circ \varphi = x^n|_{{\rm dom}(\varphi)}\}.
\]
The set $\Gamma_{x^n}$ is a pseudogroup, geometrically characterised as the set of transformations of $\mathbb{R}^n$ preserving the hyperplanes cut by $x^n = c$, $c\in \mathbb{R}$. While $\Gamma_{x^n}$ is not transitive, its orbits define a regular foliation in $\mathbb{R}^n$ (cf.\ Definition~\ref{def:transitive-orbits}). Notice that similar examples can be constructed on any manifold equipped with a regular foliation. Notice also that the elements $\varphi$ of $\Gamma_{x^n}$ can be equivalently characterised by saying that their $n$-th component $\varphi_n$ is constant; that is, they are solutions of the system of PDEs
\[
\frac{\partial \varphi_n}{\partial x^i} = 0,\quad i = 1,\dots, n. \qedhere
\]
\end{example}

\begin{example}\label{ex:gamma_conf}
On $\mathbb{R}^4$, with coordinates $(x,y,z,t)$ consider the 1-form $\Omega= dz-tydx$ and define 
\[
\Gamma_{\Omega} := \{\varphi\in {\rm Diff}_{\rm loc}(\mathbb{R}^4):\ \varphi^*\Omega = \Omega\}.
\]
The set $\Gamma_{\Omega}$ is a pseudogroup, geometrically characterised as the set of transformations of $\mathbb{R}^4$ preserving the form $\Omega$. Again, $\Gamma_\Omega$ is not transitive, because $\Omega$ is closed along the submanifold cut by the equation $t=0$, and not closed elsewhere. There are three orbits of $\Gamma_\Omega$ (cf.\ Definition~\ref{def:transitive-orbits}): the submanifold cut by $t=0$ and the two connected components of its complement. Hence, the orbits of $\Gamma_\Omega$ do not define a regular foliation on $\mathbb{R}^4$. 
%, even though they are all embedded submanifolds of $\mathbb{R}^4$ 
The elements of $\Gamma_\Omega$ are solutions of the system of PDEs
\[
\begin{cases}
\dfrac{\partial\varphi_3}{\partial x} - 	\varphi_4\varphi_2\dfrac{\partial \varphi_1}{\partial x} = -ty\\
\dfrac{\partial\varphi_3}{\partial y} - \varphi_4\varphi_2\dfrac{\partial \varphi_1}{\partial y} = 0\\
\dfrac{\partial\varphi_3}{\partial z} - \varphi_4\varphi_2\dfrac{\partial \varphi_1}{\partial z} = 1
\\
\dfrac{\partial\varphi_3}{\partial t} - \varphi_4\varphi_2\dfrac{\partial \varphi_1}{\partial t} = 0
    \end{cases} \qedhere
\]
\end{example}

\begin{example}\label{ex:gamma_hyp}
On $\mathbb{R}^2$, consider the function $f(x,y) = x^2-y^2$ and define 
\[
\Gamma_f := \{\varphi\in {\rm Diff}_{\rm loc}(\mathbb{R}^2):\ f\circ \varphi = f|_{{\rm dom}(\varphi)}\}.
\]
The set $\Gamma_f$ is a pseudogroup, geometrically characterised as the set of transformations of $\mathbb{R}^2$ preserving the level sets of $f$. Not only $\Gamma_f$ is not transitive, but its orbits are given by the level sets of $f$: they do not define a regular foliation on $\mathbb{R}^2$ 
%and are not all embedded submanifolds of $\mathbb{R}^2$ 
(cf.\ Definition~\ref{def:transitive-orbits}). The elements of $\Gamma_f$ are also characterised by the fact that $\varphi_1^2-\varphi_2^2$ is constant, i.e.\ they are solutions of the system of PDEs
\[
\begin{cases}
2\varphi_1\dfrac{\partial\varphi_1}{\partial x}-2\varphi_2\dfrac{\partial\varphi_2}{\partial x} = 0\\
2\varphi_1\dfrac{\partial\varphi_1}{\partial y}-2\varphi_2\dfrac{\partial\varphi_2}{\partial y} = 0
\end{cases} \qedhere
\]
\end{example}
%\textcolor{red}{Remove?}
%\begin{example}
%Let $M$ be a manifold, and let $\mathcal{S}=\{\mathcal{S}_i\}_{I}$ be a stratification of $M$. The set
%\[
%\Gamma_{\mathcal{S}} = \{\varphi \in {\rm Diff}_{\rm loc}(M):\ \varphi(\mathcal{S}_i)\subset \mathcal{S}_i,\  i\in \mathbb{N},\  \mathcal{S}_i\cap {\rm dom}(\varphi) \neq \emptyset\}
%\]
%is a pseudogroup. The structure of the orbits of $\Gamma_\mathcal{S}$ entirely depends on $\mathcal{S}$ and can be rather wild.
%\end{example}

In his analysis of ``transformation groups'' Lie restricted to a class of well behaved ones, which he called ``continuous transformation groups''. There are essentially three conditions characterising ``continuous transformation groups'' among all ``transformation groups'':
\begin{itemize}
\item the elements of the ``transformation group'' arise as solutions of a system of PDEs;
\item the system of PDEs is sufficently well behaved;
\item the ``transformation group'' is transitive, cf.\ Definition~\ref{def:transitive-orbits}.
\end{itemize}
The last assumption seems, and indeed is, of a different nature than the first two. One of the main merit of our approach to pseudogroups compared to older ones -- e.g.~\cite{SingerSternberg65} -- is that we are able to get rid of the transitivity assumptions. We still need (a version of) the first two assumptions. Below, we provide details. 

A starting point is the following observation:

\begin{lemma}
There is a one to one correspondence between pseudogroups $\Gamma$ on $\BB$ and sets $\hat{\Gamma}$ of local bisections of the pair groupoid $\BB\times \BB$ satisfying the following properties:
\begin{itemize}
\item $\hat{\Gamma}$ is closed under group-like operations, i.e.\ multiplication of bisections (when defined) and inversion, and the unit bisection is an element of $\hat{\Gamma}$;
\item $\hat{\Gamma}$ is closed under restriction of bisections;
\item  $\hat{\Gamma}$ is closed under gluing of bisections, i.e.\ if $\{V_k\}_{k\in K}$ covers an open subset $U\subset \BB$, $\sigma:U\hookrightarrow \BB\times \BB$ is a bisection over $U$ and $\sigma|_{V_k}\in \Gamma$ for all $k\in K$, then $\sigma\in \Gamma$.
\end{itemize}
\end{lemma}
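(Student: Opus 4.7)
The proof is essentially a bookkeeping exercise built on one key observation: local bisections of the pair groupoid $\BB\times\BB \tto \BB$ are the same data as local diffeomorphisms of $\BB$. Writing $s$ and $t$ for the two projections (say $s = \pr_2$, $t = \pr_1$), a local bisection $\sigma : U \hookrightarrow \BB\times\BB$ over an open set $U\subset\BB$ satisfies $s\circ \sigma = \id_U$, hence has the form $\sigma(x) = (\varphi(x), x)$, and the requirement that $t\circ\sigma$ be an open embedding into $\BB$ is precisely the requirement that $\varphi : U \hookrightarrow \BB$ be a local diffeomorphism. Thus the assignment
\[
\Phi : \varphi \longmapsto \sigma_\varphi, \qquad \sigma_\varphi(x) = (\varphi(x), x),
\]
is a bijection between $\mathrm{Diff}_{\rm loc}(\BB)$ and $\Bis_{\rm loc}(\BB\times\BB)$, with inverse $\sigma\mapsto t\circ\sigma$.

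The plan is then to show that $\Phi$ intertwines the three sets of operations listed in the two definitions, so that sub\-sets $\Gamma\subset \mathrm{Diff}_{\rm loc}(\BB)$ closed under pseudogroup operations correspond bijectively to sub\-sets $\hat\Gamma\subset \Bis_{\rm loc}(\BB\times\BB)$ closed under the operations in the lemma. Specifically:
\begin{itemize}
\item \emph{Group-like operations.} A direct computation shows $\sigma_{\varphi_2\circ \varphi_1} = \sigma_{\varphi_2}\cdot \sigma_{\varphi_1}$ whenever the composition is defined (both sides send $x$ to $(\varphi_2\varphi_1(x), x)$), and $\sigma_{\varphi^{-1}} = \sigma_\varphi^{-1}$; moreover $\sigma_{\id_\BB}$ is the unit bisection $x\mapsto (x,x)$.
\item \emph{Restrictions.} If $V\subset U\subset \dom(\varphi)$ is open, then $\sigma_\varphi|_V = \sigma_{\varphi|_V}$ tautologically.
\item \emph{Gluings.} Given an open cover $\{V_k\}$ of $U$ and $\varphi : U\hookrightarrow \BB$ with $\varphi|_{V_k}\in \Gamma$, the associated bisection $\sigma_\varphi$ restricts on each $V_k$ to $\sigma_{\varphi|_{V_k}}$, and conversely local bisections glue iff their underlying diffeomorphisms glue, since the gluing is performed pointwise.
\end{itemize}

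Having verified these correspondences, one concludes by defining $\hat\Gamma := \Phi(\Gamma)$ and observing, by the itemised checks, that $\Gamma$ satisfies the three defining properties of a pseudogroup if and only if $\hat\Gamma$ satisfies the three properties in the statement; bijectivity of $\Phi$ on the full sets then upgrades to bijectivity on the corresponding sub\-classes. The only mildly delicate point, which is really just a matter of fixing conventions, is making sure that the chosen convention $s = \pr_2$, $t = \pr_1$ (so that the arrow of $\sigma_\varphi(x)$ goes from $x$ to $\varphi(x)$) is consistent with the multiplication rule used for bisections; once that convention is fixed, all identities above hold on the nose and no further argument is needed.
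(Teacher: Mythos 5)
Your proof is correct and follows exactly the route the paper indicates: the paper omits the details and simply notes that "the proof is a rather simple check, based on the correspondence between diffeomorphisms of $\BB$ and bisections of $\BB\times \BB \tto \BB$," which is precisely the bijection $\varphi \mapsto \sigma_\varphi$ you construct and then verify to intertwine the three families of operations. Your itemised checks supply the details the paper leaves to the reader, so there is nothing to add.
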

The proof is a rather simple check, based on the correspondence between diffeomorphisms of $\BB$ and bisections of $\BB\times \BB \tto \BB$.

\begin{remark}
The properties satisfied by $\hat{\Gamma}$ in the proposition above are completely analogous to the properties defining a pseudogroup (Definition~\ref{def:pseudogroups}). Indeed, one can define a {\it generalised pseudogroup} as a subset $\Gamma$ of the set of local bisections of some groupoid $\G\tto \BB$ -- called the {\it support} of $\Gamma$ -- satisfying the ``pseudogroup properties'' listed above. This point of view allows to treat as ``continuous transformation groups'' geometric examples whose associated {\it classical} pseudogroup is not ``continuous'' in the sense of Lie. See~\cite{ORI,FRANCESCO,LUCA} for details. 

We stress that the techniques of this paper, developed in Sections~\ref{sec:Pfaffian-language} and~\ref{sec:PME}, apply verbatim to generalised pseudogroups -- another advantage of our approach. 
\end{remark}

Under the identification of $\Gamma$ with $\hat{\Gamma}$, any pseudogroup can be realised as bisections of both $\Germ(\Gamma) \tto \BB$ and $\BB \times \BB \tto \BB$. What makes the pair groupoid $\BB\times \BB\tto \BB$ interesting for our purposes is that, differently from $\Germ(\Gamma)$, $\BB\times\BB$ carries a non-discrete topology ``along the fibres''. This topology can be used to impose conditions on $\Gamma$.

%\textcolor{red}{SENTENCES NOT CLEAR, WHAT IS THE MAIN POINT?
%One can see the pair groupoid $\BB\times\BB\tto \BB$ as a groupoid canonically associated to any pseudogroup $\Gamma$, to be paired with the germ groupoid $\Germ(\Gamma)\tto \BB$. Notice that $\Germ(\Gamma)\tto \BB$ maps to $\BB\times \BB\tto \BB$ and the map induced on bisections is injective -- a property equivalent to effectiveness of the étale groupoid $\Germ(\Gamma)\tto \BB$. What makes the pair groupoid $\BB\times \BB\tto \BB$ interesting for our purposes is that, differently from $\Germ(\Gamma)$, $\BB\times\BB$ carries a non-discrete topology ``along the fibres''. This topology can be used to impose conditions on $\Gamma$.}

From now on, together with Lie groupoids, we will make extensive use of jet prolongations and their canonical Cartan distributions, which provide a well estabilished framework for the geometric study of systems of PDEs. The definitions and facts that we use are recalled in Appendix~\ref{app:jets}.

We observe now that, since
\[
s:\BB\times \BB \to \BB
\] 
is a surjective submersion, we have the jet prolongations (cf.\ Definition~\ref{def:jet_prol})
\[
\dots \to J^{k+1}(\BB\times \BB)\to J^k(\BB\times \BB) \to \dots\to \BB\times \BB \to \BB,
\]
where $k\in \mathbb{N}$. For each $k$, $J^k(\BB\times \BB)\to \BB$ is a Lie groupoid; the multiplication map is the one sending composable pairs of jets to the jet of their composition. We also have an analogous tower 
\[
\dots \to J^{k+1}\Gamma\to J^k\Gamma \to \dots\to J^0\Gamma \to \BB,
\]
where, for each $k$, the set $J^k\Gamma$ is defined as
\[
J^k\Gamma := \{j^k_x\varphi\in J^k(\BB\times \BB):\ \varphi\in \Gamma\}.
\]
Notice that $J^k\Gamma$ is a subgroupoid of $J^k(\BB\times \BB)$. In particular, it can be viewed as the order $k$ differential constraints satisfied by elements of $\Gamma$. Notice that $J^k\Gamma$ is not necessarily a smooth submanifold, hence a system of PDEs according to Definition~\ref{def:diff_eq}; this will be required in Definition \ref{def:Lie_pseudo}.

\begin{definition}\label{def:order}
Let $\Gamma$ be a pseudogroup on $\BB$. We say that $\Gamma$ is {\bf of order $k$} if $\varphi \in \Gamma$ if and only if $j^{k}\varphi\in J^{k}\Gamma$.% i.e.\ if and only if $\varphi$ is a solution (cf.\ Definition~\ref{def:diff_eq}) of the system of PDEs $J^{k}\Gamma\subset J^k(\BB\times \BB)$. 
\end{definition} 
Elements of $\Gamma$ are local solutions of the system of PDEs $J^k\Gamma\subset J^k(\BB\times \BB)$, for all $k$'s. In more geometric terms:
\begin{lemma}\label{lemma_holonomic_bisection_jet_pseudogroup}
Let $\varphi\in \Gamma$ be an element of $\Gamma$. Its jet prolongation (cf.\ Definition~\ref{def:hol_sec})
\[
j^k\varphi:{\rm dom}(\varphi)\to J^k\Gamma,\quad x\mapsto j^k_x\varphi
\]
is a local holonomic bisection of $J^k\Gamma\subset J^k(\BB\times \BB)$.

If $\Gamma$ has order $k$ -- see Definition~\ref{def:order} -- then the map
\[
\Gamma\to {\rm Bis}_{\rm loc}(J^k(\BB\times \BB))
\]
is bijective onto holonomic bisections with image in $J^k\Gamma$.
% there is a one-to-one correspondence between elements of $\Gamma$ and bisections of $J^k(\BB\times \BB)$ that are holonomic and such that their image lies in $J^k\Gamma$.
\end{lemma}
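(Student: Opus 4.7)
The first statement is essentially unpacking the definitions, while the second requires pinning down how a holonomic bisection of $J^k(\BB\times\BB)$ encodes a local map $\BB\to\BB$.

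For the first claim, I would start from the observation that the source and target of the Lie groupoid $J^k(\BB\times\BB)\tto\BB$ act on a jet by $s(j^k_x\varphi)=x$ and $t(j^k_x\varphi)=\varphi(x)$. Therefore $s\circ j^k\varphi=\id_{\dom(\varphi)}$ and $t\circ j^k\varphi=\varphi$; since $\varphi$ is a local diffeomorphism, $j^k\varphi$ is a local bisection of $J^k(\BB\times\BB)$, and by definition of $J^k\Gamma$ its image lies inside $J^k\Gamma$, so it is a local bisection of $J^k\Gamma$. Holonomicity is automatic: viewing $\varphi$ as (the projection onto the second factor of) the local section $x\mapsto(x,\varphi(x))$ of the source submersion $s:\BB\times\BB\to\BB$, the map $j^k\varphi$ is by construction the $k$-jet prolongation of this section, which is exactly the definition of a holonomic section of $J^k(\BB\times\BB)\to \BB$ (cf.\ Definition~\ref{def:hol_sec} in the appendix).

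For the second claim, injectivity of $\varphi\mapsto j^k\varphi$ is immediate because $\varphi$ can be recovered as $t\circ j^k\varphi$. For surjectivity onto holonomic bisections with image in $J^k\Gamma$, I would take such a bisection $\sigma:U\to J^k(\BB\times\BB)$. Being holonomic means $\sigma=j^k\tilde\varphi$ for some local section $\tilde\varphi$ of $s:\BB\times\BB\to\BB$, which is the same datum as a smooth map $\varphi:U\to\BB$. The bisection condition $t\circ\sigma$ diffeomorphism onto its image translates directly to $\varphi$ being a local diffeomorphism. At this point it remains only to conclude that $\varphi\in\Gamma$: since the image of $\sigma=j^k\varphi$ is contained in $J^k\Gamma$ by assumption, this is exactly where the hypothesis that $\Gamma$ has order $k$ (Definition~\ref{def:order}) is used.

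The only subtle point — and therefore the main obstacle — is the translation in the surjectivity step between \emph{holonomic bisections of $J^k(\BB\times\BB)\tto\BB$} and \emph{$k$-jet prolongations of local maps $\BB\to\BB$}. One must keep straight that $J^k(\BB\times\BB)$ is viewed as the jet bundle of the source submersion, so that its holonomic sections correspond to $k$-jets of graphs $x\mapsto(x,\varphi(x))$, and that the bisection condition imposes precisely that $\varphi$ be a local diffeomorphism. Once these identifications are in place, the order-$k$ hypothesis on $\Gamma$ closes the argument in one line.
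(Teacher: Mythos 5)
Your proof is correct and is precisely the definition-unpacking the paper intends: the paper states this lemma without proof, treating it as an immediate consequence of Definitions~\ref{def:hol_sec} and~\ref{def:order} together with the identification of local sections of $s:\BB\times\BB\to\BB$ with smooth maps $U\to\BB$. Your identification of the order-$k$ hypothesis as the exact ingredient needed for surjectivity, and of the bisection condition as encoding that $t\circ\sigma=\varphi$ is a diffeomorphism onto its image, matches the intended argument.
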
 
Using the construction of the germ groupoid from Theorem~\ref{thm:Haef_corresp}, there is a one to one correspondence between elements of a pseudogroup $\Gamma$ over $\BB$ and bisections (with no additional property) of its germ groupoid $\Germ(\Gamma)\tto \BB$.

Lie's ``continuity'' condition for pseudogroups is expressed in terms of the jet groupoids $J^k\Gamma\tto \BB$.

\begin{definition}\label{def:Lie_pseudo}
A pseudogroup $\Gamma$ on a manifold $\BB$ is called {\bf Lie pseudogroup} when there exists $k_\Gamma\in \mathbb{N}$ such that: 
\begin{itemize}
\item $J^k\Gamma$ is a Lie subgroupoid of $J^k(\BB, \BB)$, for all $k\geq k_\Gamma$;
\item all the groupoid maps $J^{k+1}\Gamma\to J^{k}\Gamma$ are surjective submersions for $k\geq k_\Gamma$.
\end{itemize}
\end{definition}
%If $\Gamma$ is a Lie pseudogroup of order $k_0$, then $\varphi \in \Gamma$ if and only if $\varphi$ is a solution of the system of PDEs $J^{k_0}\Gamma$.

Lie pseudogroups are the modern version of Lie's ``continuous transformation groups''. As promised, transitivity is not assumed in our definition.
%\begin{remark}
%Let $\Gamma$ be a Lie pseudogroup of order $k_0$. Then $k_\Gamma\leq k_0$. Viceversa, if $\Gamma$ is a pseudogroup of order $k_0$ and $J^{k_0}\Gamma$ is an embedded submanifold.
%\end{remark}
\begin{remark}
Notice that a subgroupoid of the pair groupoid $\BB\times \BB$ is a Lie subgroupoid if and only if its orbits define a regular foliation on $\BB$; hence, if $\Gamma$ is a Lie pseudogroup on $\BB$, to have $k_\Gamma=0$ in Definition~\ref{def:Lie_pseudo} the orbits of $\Gamma$ need to define a regular foliation on $\BB$.
\end{remark}

\begin{example}
All the examples from Example~\ref{ex:pseudo_as_sym} are Lie pseudogroups of order $1$. Notice that the integer $k_\Gamma$ from Definition~\ref{def:Lie_pseudo} is equal to $0$ in all examples except the pseudogroup $\Gamma^M_{g}$ of local isometries of a Riemannian metric $(M,g)$. In fact, the orbits of the $0$-th jet groupoid of $\Gamma^M_{g}$ do not necessarily define a regular foliation of $\BB$. Nonetheless, $\Gamma^M_{g}$ is a Lie pseudogroup with $k_{\Gamma^M_g}=1$.
\end{example}

\begin{example}
The pseudogroup $\Gamma_{x^n}$ from Example~\ref{ex:gamma_coord} is Lie of order $0$, and $k_{\Gamma_{x^n}}=0$. 

The pseudogroup $\Gamma_\Omega$ from Example~\ref{ex:gamma_conf} is Lie of order 1, and $k_{\Gamma_{\Omega}}=1$. 
%In fact, the equations defining $\Gamma_\Omega$ define an embedded submanifold in $J^1\Gamma_\Omega$.

On the other hand, the pseudogroup $\Gamma_f$ from Example~\ref{ex:gamma_hyp} is still of order $1$ but it is not Lie. In fact, the system of differential equations defining $\Gamma_f$ is singular and does not define a submanifold of $J^1\Gamma_f$; moreover, the systems obtained by differentiating the defining one are also singular. In other words, no $k_\Gamma$ exists satisfying Definition~\ref{def:Lie_pseudo}.
\end{example}

Let $\Gamma$ be a pseudogroup. Recall that, under Haefliger's correspondence described in Theorem~\ref{thm:Haef_corresp}, $\Gamma$-structures are equivalently encoded as principal $\Germ(\Gamma)$-bundles whose moment map is étale (this last condition explains while we imposed Axiom \ref{axiom_moment_map} in the Appendix):
\[
\xymatrix{
\Germ(\Gamma) \ar@<0.25pc>[dr] \ar@<-0.25pc>[dr]  & \ar@(dl, ul) &  \Germ(\mathcal{A}) \ar[dl]^{\mu}\ar[dr]^{\pi} &    \\
&\BB  & &  M.}
\]
Here $\Germ(\mathcal{A})$ denotes the set of germs of charts in a $\Gamma$-atlas $\mathcal{A}$, cf.\  Definition~\ref{def:Gamma-atlas}. If $\Gamma$ is also Lie, for all $k\geq k_\Gamma$ we have the induced principal bundles (the notiation having the obvious meaning)
\[
\xymatrix{
J^k\Gamma \ar@<0.25pc>[dr] \ar@<-0.25pc>[dr]  & \ar@(dl, ul) &  J^k\mathcal{A} \ar[dl]^{\mu}\ar[dr]^{\pi} &    \\
&\BB  & &  M.}
\]
\begin{definition}\label{def:almost-gamma-structure}
Let $\Gamma$ be a Lie pseudogroup over $\BB$ and let $k\geq k_\Gamma$ be a positive integer. An {\bf almost $\Gamma$-structure of order $k$} is a principal $J^k\Gamma$-bundle
\[
\xymatrix{
J^k\Gamma \ar@<0.25pc>[dr] \ar@<-0.25pc>[dr]  & \ar@(dl, ul) &  P \ar[dl]^{\mu}\ar[dr]^{\pi} &    \\
&\BB  & &  M,}
\]
such that the inclusion of Lie groupoids $J^k\Gamma\hookrightarrow J^k({\rm Diff}_{\rm loc}(\BB))$ extends to an inclusion of principal bundles into 
\[
\xymatrix{
J^k({\rm Diff}_{\rm loc}(\BB))\ar@<0.25pc>[dr] \ar@<-0.25pc>[dr]  & \ar@(dl, ul) &  J^k(M,\BB) \ar[dl]^{\mu}\ar[dr]^{\pi} &    \\
&\BB  & &  M.}
\]
%An almost $\Gamma$-structure of order $k$ on $M$ is called {\bf integrable} if it is the projection, in the natural sense, of a $\Gamma$-structure on $M$.
\end{definition}

\begin{remark}
In the terminology from~\cite{FRANCESCO}, an almost $\Gamma$-structure of order $k$ is a $J^k\Gamma$-reduction of $J^k (M,\BB)$. The techniques developed there concerning integrability work actually for a more general class of principal groupoid bundles, namely principal Pfaffian bundles, which we present in Definition~\ref{def:princ-pfaff-bundles}. We stress already here that, anyways, any almost $\Gamma$-structure is Pfaffian.
\end{remark}

\begin{example}
A Riemannian manifold $(M,g)$ can be equivalently described as a $J^1\Gamma_{\rm eucl}$-structure, where $\Gamma_{\rm eucl}$ is the pseudogroup of local isometries of the Euclidean metric $g^{\rm eucl}$ on $\mathbb{R}^n$. Explicitly proving such claim is rather instructive; we sketch the argument below.

The key fact to use is that principal groupoid bundles are equivalently encoded into equivalence classes of groupoid cocycles, a fact already exploited in Theorem~\ref{thm:Haef_corresp} for étale groupoids.

On the one hand, let $(M,g)$ be a Riemannian manifold. Notice that for all $(x,y)\in \mathbb{R}^n\times \mathbb{R}^n$ there is a subbundle ${\rm Aut}_{g^{\rm eucl}}(T\mathbb{R}^n,T\mathbb{R}^n)$ of the bundle ${\rm Aut}(T\mathbb{R}^n,T\mathbb{R}^n)$ of linear maps between $T_x\mathbb{R}^n$ and $T_y\mathbb{R}^n$ consisting of linear isometries from $(T_x\mathbb{R}^n,g^{\rm eucl}_x)$ to $(T_y\mathbb{R}^n,g^{\rm eucl}_y)$. If $(\{U_i\}_{i\in I},\{f_i\}_{i\in I})$ is an atlas of $M$, then we can ``complete'' it by choosing, for each par $i$, $j$ of indices in $I$, a smooth section $L_{ij}$ of ${\rm Aut}_{g^{\rm eucl}} (T\mathbb{R}^n,T\mathbb{R}^n)$ over $U_i\cap U_j$ lifting the transition function $\varphi_{ij}$ from $U_i\cap U_j$. The pair $(\varphi_{ij},L_{ij})$ is also called a {\it formal isometry} over $f_i(U_i\cap U_j)$. Notice that, if $\varphi_{ij}$ was an isometry, then we could choose $L_{ij}=d\varphi_{ij}$. The pair $(\varphi_{ij},L_{ij})$ canonically defines a map 
\[
\sigma_{ij}: U_i \cap U_j \to J^1\Gamma_{\rm eucl};
\]
such map is tangent to the Cartan distribution on $J^1\Gamma_{\rm eucl}$ -- see Definition~\ref{def:cart-dist} and Proposition~\ref{prp:cart-dist} -- precisely when $\varphi_{ij}$ is an isometry and $L_{ij}=d\varphi_{ij}$. It follows immediately that collection of $\sigma_{ij}$'s defines a cocycle valued in $J^1\Gamma_{\rm eucl}$, hence a principal $J^1\Gamma_{\rm eucl}$-bundle. and that such cocycle can be ``promoted'' to a $\Gamma_{\rm eucl}$-cocycle if and only if the fixed atlas is a $\Gamma_{\rm eucl}$-atlas; that is, if and only if the Riemannian metric $g$ is flat.

On the other hand, given an almost $J^1\Gamma_{\rm eucl}$-structure of order 1 on $M$, we can choose a $J^1\Gamma_{\rm eucl}$-cocycle supported by an open cover $\{U_i\}_{i\in I}$ of $M$, and local sections over the opens $U_i$'s of the principal $J^1\Gamma_{\rm eucl}$-bundle compatible with the cocycle. The cocycle can be used to construct an atlas $(\{U_i\}_{i\in I},\{f_i\}_{i\in I})$ with transition functions $\varphi_{ij}$'s ``completed'' with linear isometries $L_{ij}$'s as above. The local sections define sections of 
\[
{\rm Iso}(TM|_{U_i},T\mathbb{R}^n|_{f_i(U_i)})\to U_i,
\] for each $i\in I$. A Riemannian metric $g_i$ over $U_i\subset M$ can be defined using such a section to pullback $g^{\rm eucl}$; the compatibility between the cocycle $J^1\Gamma_{\rm eucl}$ and the sections of the principal $J^1\Gamma_{\rm eucl}$-bundle imply that there exists a (unique) metric $g$ on $M$ such that, for all $i\in I$ 
\[
g|_{U_i} = g_i.
\]
Then on each $T_xM$ one defines a metric $g_x$ on $T_xM$ as the pullback via $d_xf$ of ${g^{\rm eucl}}$ at $f(x)\in \mathbb{R}^n$.
\end{example}

Let us sum up the main points discussed in this subsection. Given a Lie pseudogroup $\Gamma$, we have a commutative diagram of Lie groupoids
\[
\begin{tikzcd}
\dots\arrow[r] & J^{k_\Gamma +1} \Gamma \arrow[r]& J^{k_\Gamma} \Gamma  \arrow[shift right,r]\arrow[r,shift left] & \BB\\
\Germ(\Gamma)\arrow[u]\arrow[ur]\arrow[urr,] & & 
\end{tikzcd} 
\]
%\[
%\dots \to J^{k+1}\Gamma\to J^k\Gamma \to \dots\to J^{k_\Gamma}\Gamma \tto \BB,
%\]
where each horizontal map is a surjective submersion.
%Lie groupoid map, and a collection of Lie groupoid morphisms
%\[
%\Germ(\Gamma) \to J^k\Gamma,\quad k\in \mathbb{N},\ k\geq k_\Gamma
%\]
%sending ${\rm germ}_x\varphi$ to $j^k_x\varphi$.
This object should be interpreted as the abstract groupoid-theoretical concept corresponding to the {\it Lie} pseudogroup $\Gamma$ in the same sense in which the effective étale Lie groupoid $\Germ(\Gamma)$ corresponds to the pseudogroup $\Gamma$. 
%To make this statement precise, some preparatory work is needed. 
In the next section, we will consider (see Definitions~\ref{def_Pfaffian_groupoid-form} and~\ref{def_Pfaffian_groupoid-distribution}) a class of groupoids whose properties are designed to capture the properties of the pair $(J^k\Gamma,\omega)$, where $\Gamma$ is a Lie pseudogroup and $\omega$ is (the restriction to $J^k\Gamma$ of) the Cartan form. Similarly, we will introduce an abstract version of almost $\Gamma$-structures (see Definitions \ref{def:princ-pfaff-bundles} and \ref{def:princ-pfaff-bundles-dist}).

\section{The Pfaffian language}\label{sec:Pfaffian-language}

As discussed in the previous section, geometries on manifolds can be described using principal groupoid bundles. Moreover, the groupoids involved in such a description (and, in fact, the bundles themselves) are jet prolongations, and as such they carry additional structures: the Cartan distribution (see Section \ref{app:jets}). The framework that we present below, introduced in~\cite{MARIA} and later developed in~\cites{ORI,FRANCESCO}, allows to study jet groupoids and their principal bundles from an abstract point of view. This approach will be quite advantegeous especially when dealing with Morita equivalences in Section~\ref{sec:PME}. 

There are two equivalent ways of describing all the concepts presented in this section: one makes use of distributions, and the other one of vector-valued 1-forms. We present them both, as it is often the context to dictate which approach provides the best insight. 

\subsection{Pfaffian groupoids}

\begin{definition}[cf.\ Definition \ref{def_Pfaffian_groupoid-distribution}]\label{def_Pfaffian_groupoid-form}
A {\bf Pfaffian groupoid} $(\G, \omega, E)$ over $\BB$ consists of a Lie groupoid $\G \rightrightarrows \BB$ together with a representation $E \to \BB$ of $\G$ and a differential form $\omega \in \Omega^1 (\G, t^* E)$ such that
\begin{enumerate}
\item $\omega$ is {\bf multiplicative} i.e.\
\[
(m^*\omega)_{(g, h)}=(\pr_1^*\omega)_{(g, h)}+g\cdot (\pr_2^*\omega)_{(g, h)},\quad (g, h)\in \G\times_\BB\G;
\]
\item $\omega$ has constant rank; %, i.e $\dim (\ker(\om_g))$ is constant for every $g \in \mathcal{G}$
\item $\omega$ is {\bf $s$-transversal}, i.e.\ $T \G = \ker (\omega) + \ker (ds) \subset T \G$;
\item the subbundle
\[
\g(\omega):=\ker(\omega)\cap \ker(ds) \subset T\G
\]
is involutive;
\item it holds
\begin{equation*}
\ker (\omega) \cap \ker(dt) = \ker (\omega) \cap \ker(ds). 
\end{equation*}
\end{enumerate}
We call $\g(\omega)$ the {\bf symbol bundle} of $(\G, \omega, E)$. A {\bf holonomic bisection} of $(\G, \omega, E)$ is a local bisection $\sigma:U\subset \BB \to \G$ such that $\sigma^*\omega = 0$.

A Pfaffian groupoid $(\G,\omega, E)$ is called {\bf full} if the form $\omega$ is pointwise surjective.
\end{definition}

The main examples to have in mind are the jet groupoids of a Lie pseudogroup together with their Cartan form -- see Example~\ref{exm:jet-groupoids} below.

\begin{remark}\label{symbol_space_is_ideal}
One can replace the condition 4.\ on $\g(\omega)$ with the involutivity of $\mathfrak{g}_\BB: = (\ker (\omega) \cap \ker (ds))|_{ \BB} \subset A$, i.e.\ asking that $\g_\BB$ is a Lie subalgebroid of $A:={\rm Lie}(\G)$. We will often use this point of view. We also stress here that $\g_\BB$ is in fact an {\it ideal} of $A$; see Proposition~\ref{prp:coefficent-space-is-alg}, point $1$.
%the involutivity of the foliation $\g(\omega):=\ker(\omega)\cap \ker(ds) \subset T\G$. 
%Notice also that if $\mathfrak{g}_\BB$ is a subalgebroid then it is automatically an ideal.
%, since the image $\omega(A)\subset E$ carries a unique Lie algebroid structure making $\omega|_A$ into a Lie algebroid map.
\end{remark}

%\begin{remark}\label{rmk:unit-bis-is-hol}
%When $(\G, \omega, E)$ is a Pfaffian groupoid, the unit bisection $\BB\hookrightarrow \Sigma$ is holonomic -- as one readily sees from the multiplicativity equation.
%\end{remark}

\begin{remark}
Actually, the $s$-transversality condition in Definition~\ref{def_Pfaffian_groupoid-form} follows from the multiplicativity condition and the fact that $\omega$ has constant rank. See~\cite[Proposition 3.4.12]{FRANCESCO}.
\end{remark}
%For our purposes, the main examples where pointwise surjectivity will be dropped are Pfaffian {\it groups}. 
We list below some properties of Pfaffian groupoids that are relevant for our discussion; see~\cite{MARIA} for details.

\begin{proposition}\label{prp:adjoint-rep}
Let $(\Sigma, \omega, E)$ be a Pfaffian groupoid: then the following facts hold true.
\begin{itemize}
\item The quotient $A/\mathfrak{g}_\BB\to \BB$ is a representation of $\Sigma\tto \BB$. The action is given by
\[
g\cdot [V_x] \mapsto [dR_{g^{-1}}\cdot dm({\rm hor}^\omega_g(dt(V_x)), V_x)],
\]
where $g\in \Sigma$, $x=s(g)$, $[V_x]\in \ker(ds)|_\BB/\g_\BB$, and ${\rm hor}^\omega_g(v_x)$ is any $s$-lift of $v_x$ tangent to $\ker(\omega)$.
\item The tangent space $T\BB\to \BB$ is a representation of $\Sigma\tto \BB$. The action is given by
\[
\Sigma\times_\BB T\BB \to T\BB,\quad (g, v_x) \mapsto dt({\rm hor}^\omega_g(v_x)),
\]
where ${\rm hor}^\omega_g(v_x)$ is any $s$-lift of $v_x$ tangent to $\ker(\omega)$.
\item The map $A/\mathfrak{g}_\BB\to T\BB$ induced by the anchor of $A$ is a morphism of representations.
\end{itemize}
\end{proposition}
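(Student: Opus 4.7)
The plan is to exploit the two defining properties of a Pfaffian groupoid that feed directly into the horizontal-lift construction: the $s$-transversality $T\Sigma = \ker(\omega) + \ker(ds)$ (condition 3) guarantees the existence of the $s$-lift ${\rm hor}^\omega_g$, and the symmetry $\ker(\omega) \cap \ker(dt) = \ker(\omega) \cap \ker(ds) = \g(\omega)$ (condition 5) ensures that the ambiguity of this lift is invisible to both $ds$ and $dt$. The constant-rank condition 2 is there to make sure that the short exact sequence $0 \to \g(\omega) \to \ker(\omega) \xrightarrow{ds} s^*T\BB \to 0$ is a sequence of smooth vector bundles.

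First I would establish well-definedness. For the $T\BB$-action, any two $s$-lifts of $v_x$ annihilating $\omega$ differ by an element of $\g(\omega)_g \subset \ker(dt)$, so $dt({\rm hor}^\omega_g(v_x)) \in T_{t(g)}\BB$ is independent of the chosen lift. For the $A/\g_\BB$-action, I would observe that the pair $({\rm hor}^\omega_g(dt(V_x)), V_x)$ is composable in $T\Sigma \times_{T\BB} T\Sigma$ (the $s$-projection of the first matches the $t$-projection of the second, both equal to $dt(V_x)$), that $dm$ preserves $\ker(ds)$ in the second factor, and that $dR_{g^{-1}}$ carries $\ker(ds)_g$ onto $A_{t(g)}$. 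The residual ambiguity produced, after $dm$ and $dR_{g^{-1}}$, by changing the horizontal lift lands exactly in $\g_\BB$, which is killed by the quotient.

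The action axioms for both representations would be verified in parallel. The identity axiom uses $\omega|_{u(\BB)} = 0$, a standard consequence of multiplicativity evaluated at $(u(x), u(x))$; this identifies ${\rm hor}^\omega_{u(x)}(v)$ with $du(v)$ modulo $\g(\omega)$, and then $dt \circ du = \id$ gives the identity on $T\BB$, while $dm(du(dt(V_x)), V_x) = V_x$ combined with $R_{u(x)} = \id$ on the relevant $s$-fiber gives it on $A/\g_\BB$. The composition axiom is the crux and is where multiplicativity is genuinely used: for composable $X_g, Y_h \in \ker(\omega)$, the identity $(m^*\omega)_{(g,h)}(X_g, Y_h) = \omega(X_g) + g\cdot \omega(Y_h)$ shows that $dm(X_g, Y_h)$ is again horizontal. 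Taking $Y_h = {\rm hor}^\omega_h(v)$ and $X_g = {\rm hor}^\omega_g(dt(Y_h))$, the vector $dm(X_g, Y_h)$ is horizontal at $gh$ and $s$-projects to $v$, so it represents ${\rm hor}^\omega_{gh}(v)$ modulo $\g(\omega)$. Applying $dt$ and using $dt \circ dm = dt \circ \pr_1$ on composable arrows delivers $(gh)\cdot v = g \cdot (h \cdot v)$; the $A/\g_\BB$-side follows analogously by a careful accounting of right translations, using $R_{(gh)^{-1}} = R_{g^{-1}} \circ R_{h^{-1}}$.

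Finally, the morphism-of-representations claim reduces to two observations. Condition 5 implies $\g_\BB \subset \ker(dt)|_\BB$, so the anchor $\rho = dt|_A$ vanishes on $\g_\BB$ and descends to a bundle map $\bar\rho : A/\g_\BB \to T\BB$. Since $t \circ R_{g^{-1}} = t$ on $s^{-1}(t(g))$ and $t \circ m = t \circ \pr_1$ on composable arrows, both $\bar\rho(g \cdot [V_x])$ and $g \cdot \bar\rho([V_x])$ collapse to $dt({\rm hor}^\omega_g(dt(V_x)))$, and the identity is immediate. The main obstacle throughout is the bookkeeping of right-translations, source/target conditions, and composabilities to ensure that at every step the ambiguity $\g(\omega)$ is quotiented out in the correct place; once this is carefully set up, all geometric input is supplied by multiplicativity together with conditions 3 and 5.
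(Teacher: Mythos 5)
The paper itself gives no proof of this proposition --- it is stated as background recalled from \cite{MARIA} --- so there is nothing in the text to compare your argument against; I can only assess it on its own terms, and on those terms it is correct and is the expected argument. You correctly isolate the three inputs (existence of horizontal lifts from $s$-transversality, well-definedness of $dt\circ{\rm hor}^\omega_g$ from condition 5, stability of $\ker(\omega)$ under $dm$ from multiplicativity), and the composition axiom and the compatibility with the anchor are handled exactly as one should, via $t\circ m = t\circ \pr_1$ and $t\circ R_{g^{-1}}=t$.

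One step is missing, though it is routine: for the $A/\g_\BB$-action you check independence of the choice of horizontal lift, but not independence of the representative $V_x$ of the class $[V_x]$, which is also required for the formula to descend to the quotient. This needs two further observations, both again consequences of multiplicativity together with condition 5: if $W\in\g_{\BB}$ at $x$, then $dt(W)=0$, so replacing $V_x$ by $V_x+W$ does not change ${\rm hor}^\omega_g(dt(V_x))$; and the resulting correction $dR_{g^{-1}}\,dm(0_g,W)=dR_{g^{-1}}\,dL_g(W)$ lies in $\g_{\BB}$ at $t(g)$, because $\omega$ is left-equivariant and right-invariant on $\ker(ds)$ (cf.\ Remark~\ref{rmk:Pf-form-inv-equiv}), so $dL_g$ and $dR_{g^{-1}}$ preserve $\ker(\omega)\cap\ker(ds)$. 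The same right-invariance is what justifies your claim that the lift-ambiguity ``lands exactly in $\g_\BB$'' after applying $dR_{g^{-1}}$, so it is worth stating explicitly. With these additions, and your remark that constant rank makes $\ker(\omega)$ a smooth subbundle (hence the actions smooth), the proof is complete.
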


\begin{remark}
If $(\Sigma, \omega, E)$ is a Pfaffian groupoid, the representation on $T\BB$ from the above proposition extends the normal representation of $\Sigma$ (see the discussion after Proposition \ref{prp:Transverse-geometry}). 

Moreover, there is an induced representation of the isotropy group $\Sigma_x$ on $A_x/\g_\BB$. If $\g_x$ denotes the Lie algebra of $\Sigma_x$ carrying the adjoint representation, we see that the map
\[
\g_x\to A_x/\g_\BB
\]
induced by the quotient map is a morphism of $\G_x$-representations.
\end{remark}

In what follows, we denote by $T_\BB\Sigma$ the pullback of $T\Sigma \to \Sigma$ via the unit map $u: \BB \to \Sigma$.

\begin{proposition}\label{prp:coefficent-space-is-alg}
Let $(\Sigma, \omega, E)$ be a Pfaffian groupoid. 
\begin{enumerate}
\item The image $\omega(T_\BB\Sigma)$ is a vector bundle isomorphic to  $A/\mathfrak{g}_\BB$ and possesses a canonical Lie algebroid structure making 
\[
\omega|_{\ker(ds)|_\BB}: \ker(ds)|_\BB\to \omega(\ker(ds)|_\BB)
\]
into a Lie algebroid map.
\item The inclusion $\omega(T_\BB\Sigma)\hookrightarrow E$ is a map of representation.
\item The following map is an isomorphism of representations:
\[
\omega(T_\BB \Sigma)\to A/\mathfrak{g}_\BB,\quad \omega(v_g)\mapsto [v^{\ker(ds)}_g],
\]
where $v_x\in T_x\Sigma$ and $v_x=v^{\ker(ds)}_x+v_x^{\ker(\omega)}$, $v^{\ker(ds)}_x\in \ker_x(ds)$, $v_x^{\ker(\omega)}\in \ker_x(\omega)$.
\end{enumerate}
\end{proposition}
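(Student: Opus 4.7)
The plan is to prove all three statements in sequence, building from a purely linear isomorphism of vector bundles up to compatibility with the Lie algebroid and representation structures.

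First, I would establish the underlying vector bundle isomorphism. By $s$-transversality (condition 3), every $v_x \in T_\BB\Sigma$ admits a decomposition $v_x = v_x^A + v_x^{\ker(\omega)}$ with $v_x^A \in A_x$ and $v_x^{\ker(\omega)} \in \ker(\omega)_x$; any two such decompositions differ by an element of $\ker(ds)|_\BB \cap \ker(\omega) = \g_\BB$, so the class $[v_x^A] \in A/\g_\BB$ is canonical and $\omega(v_x) = \omega(v_x^A)$ depends only on this class. This shows $\omega(T_\BB\Sigma) = \omega(A)$ and defines a well-defined map $\omega(T_\BB\Sigma) \to A/\g_\BB$, $\omega(v_x) \mapsto [v_x^A]$, whose inverse $[X] \mapsto \omega(X)$ is induced by $\omega|_A$ (whose kernel equals $\g_\BB$ by construction). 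The constant-rank condition (2) ensures $\omega(T_\BB\Sigma)$ is a smooth subbundle of $E|_\BB$.

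The crux of the argument is upgrading Remark \ref{symbol_space_is_ideal}: I must show $\g_\BB$ is actually an \emph{ideal} of $A$, not merely a Lie subalgebroid. The anchor $dt|_A$ vanishes on $\g_\BB$ because condition (5) places $\g_\BB \subset \ker(dt)|_\BB$. For the bracket closure $[\Gamma(A), \Gamma(\g_\BB)] \subset \Gamma(\g_\BB)$, my plan is to extend sections to right-invariant vector fields on $\Sigma$ and exploit multiplicativity. For $X \in \Gamma(A)$ with right-invariant extension $X^r$ and $Y \in \Gamma(\g_\BB)$ extended to $Y^r \in \Gamma(\g(\omega))$, combine involutivity of $\g(\omega)$ (condition 4) with a Cartan-type formula for $\mathcal{L}_{X^r}\omega$ derived from multiplicativity, to conclude that $[X^r, Y^r]|_\BB$ lies in $\g_\BB$. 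This is the step I expect to be the main obstacle, as it requires careful bookkeeping of the interaction between multiplicativity and the representation structure on $E$; once in place, everything else follows formally.

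Given the ideal property, the quotient $A/\g_\BB$ inherits a canonical Lie algebroid structure (bracket $[[X],[Y]] := [[X,Y]_A]$ and descended anchor), which transports to $\omega(T_\BB\Sigma)$ via the isomorphism. The map $\omega|_{\ker(ds)|_\BB}$ then factors as the quotient projection composed with this isomorphism, hence is automatically a Lie algebroid morphism, proving (1). For (2) and (3), I would apply multiplicativity to the composable pair $(\mathrm{hor}^\omega_g(dt(V_x)), V_x)$ with $x = s(g)$: since $\omega$ vanishes on the horizontal lift, the multiplicativity identity yields
\[
\omega_g\bigl(dm(\mathrm{hor}^\omega_g(dt(V_x)), V_x)\bigr) \;=\; g \cdot \omega(V_x) \;\in\; E_{t(g)}.
\]
This already shows that the subbundle $\omega(T_\BB\Sigma) \subset E$ is $\Sigma$-invariant, proving (2). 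For (3), a second application of multiplicativity to the pair $(g, g^{-1})$ identifies $\omega \circ dR_{g^{-1}}$ on the relevant tangent vectors with $g \cdot \omega(\cdot)$, so the iso $\omega(T_\BB\Sigma) \to A/\g_\BB$ intertwines the two $\Sigma$-actions described in Proposition \ref{prp:adjoint-rep}.
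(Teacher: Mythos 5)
First, a point of comparison: the paper does not actually prove Proposition~\ref{prp:coefficent-space-is-alg} --- it is recalled as a known fact with a pointer to \cite{MARIA} --- so there is no in-paper argument to measure yours against. That said, your architecture is the right one. The linear-algebra step (decomposition $v_x = v_x^A + v_x^{\ker(\omega)}$ from $s$-transversality, well-definedness modulo $\g_\BB$, constant rank for smoothness of the image) is correct; your use of condition (5) to show $dt|_{\g_\BB}=0$ is exactly right; and your multiplicativity computation on the composable pair $(\mathrm{hor}^\omega_g(dt(V_x)), V_x)$, combined with right-invariance of $\omega|_{\ker(ds)}$ (Lemma~\ref{lemma:Pfaffian_and_MC}, which is what your ``second application of multiplicativity to $(g,g^{-1})$'' amounts to), is precisely what identifies the action of Proposition~\ref{prp:adjoint-rep} with the restriction of the $E$-action, giving points (2) and (3).

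The gap sits where you flag it: the bracket closure $[\Gamma(A),\Gamma(\g_\BB)]\subset\Gamma(\g_\BB)$. The ingredients you name are not the operative ones. Involutivity of $\g(\omega)$ (condition 4) only controls brackets of two sections of the symbol; it cannot yield closure under bracketing with an arbitrary $X\in\Gamma(A)$, whose right-invariant extension is not tangent to $\g(\omega)$. What actually closes the step is the infinitesimal counterpart of multiplicativity: differentiating the multiplicativity equation produces the Spencer operator $D$ of $(\Sigma,\omega,E)$ together with an identity of the form $l([\alpha,\beta]) = \alpha\cdot l(\beta) - D_{\rho(\beta)}(\alpha)$ (see \cite{MARIA} or \cite{CrainicSalazarStruchiner15}). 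For $\beta\in\Gamma(\g_\BB)$ one has $l(\beta)=0$ and, by condition (5), $\rho(\beta)=dt(\beta)=0$, so both terms on the right vanish and $[\alpha,\beta]\in\ker(l)\cap\Gamma(A)=\Gamma(\g_\BB)$. Your ``Cartan-type formula for $\mathcal{L}_{X^r}\omega$'' is essentially this Spencer identity in disguise, so your route can be completed --- but as written the key identity is asserted rather than derived, and the emphasis on involutivity is a red herring. Note that the remark following Proposition~\ref{prp:coefficent-space-is-alg} in the paper stresses that the statement fails without condition (5); this confirms that (5), not (4), is the decisive hypothesis at this step.
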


As a consequence, {\it full} Pfaffian groupoids $(\Sigma, \omega, E)$ are completely encoded into the Lie groupoid $\Sigma$ and the form $\omega$ -- the representation $E\to \BB$ can be reconstructed out of these two pieces of data. 
%Motivated by this fact, from now on {\it we will assume the coefficent space $E\to \BB$ of a (non necessarily full) Pfaffian groupoid $(\Sigma, \omega, E)$, $\omega\in \Omega^1(\Sigma, t^*E)$ to possess a Lie algebroid structure such that 
%\[
%A/\mathfrak{g}_\BB\cong \omega(T_\BB\Sigma) \hookrightarrow E
%\]
%is a map of Lie algebroids over $\BB$}.
%\begin{definition}[cf.\ Definition \ref{def_Lie-Pfaffian_groupoid-distribution}]\label{def_Lie-Pfaffian_groupoid-form}
%An {\bf extended Pfaffian groupoid} over $\BB$ is a Pfaffian groupoid $(\Sigma, \omega, E)$ over $\BB$, together with a Lie algebroid structure on the coefficient space $E\to \BB$ such that 
%\[
%A/\mathfrak{g}_\BB\cong \omega(T_\BB\Sigma) \hookrightarrow E
%\]
%is a map of Lie algebroids over $\BB$.
%\end{definition}
%For full Pfaffian groupoids $(G, \omega, E)$ we have $A/\g_\BB\cong E$; consequently, a full Pfaffian groupoid possesses one and only one structure of extended Pfaffian groupoid.

\begin{remark}
A remark about terminology: in \cites{MARIA, ORI, FRANCESCO} a Pfaffian groupoid was not required to satisfy $\ker(\omega) \cap \ker(dt) = \ker(\omega) \cap \ker(ds)$ (a full Pfaffian groupoid satisfying such condition was called ``Lie-Pfaffian''). In such setting, Proposition \ref{prp:coefficent-space-is-alg} does not hold. For our purposes, it is more convenient to require the ``Lie-Pfaffian'' condition directly in the definition. A more detailed discussion on the Lie-Pfaffian condition, also in the non-full case, can be found in \cite{INPROGRESS}.
%We then introduce the notion of extended Pfaffian groupoid as an intermediate object between arbitrary Pfaffian groupoids and full ones (which are canonically extended, as explained). \textcolor{red}{See also \cite{INPROGRESS}}
\end{remark}

As Proposition~\ref{prp:coefficent-space-is-alg} suggests, full Pfaffian groupoids can be equivalently described in terms of distributions.

\begin{proposition}\label{prp:dist-picture}
A full Pfaffian groupoid is equivalently described as a groupoid $\G\tto \BB$ equipped with a distribution $\CC$ that is
\begin{itemize}
\item {\bf multiplicative} in the sense that $\CC\subset T\Sigma$ is a wide subgroupoid (the right hand side denotes the tangent groupoid $T\Sigma\tto T\BB$, where the structure maps are obtained by differentiating those of $\Sigma\tto \BB$);
\item {\bf $s$-transversal} in the sense that $\CC+\ker(ds)=T\G$;
\item such that $\g(\CC):=\CC\cap \ker(ds)$ is involutive;
\item such that $\CC\cap \ker(ds)=\CC\cap \ker(dt)$.
\end{itemize}
\end{proposition}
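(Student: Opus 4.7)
The plan is to exhibit the correspondence explicitly in both directions and then verify it is bijective.

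First, given a full Pfaffian groupoid $(\Sigma,\omega,E)$, I would simply set $\CC := \ker(\omega)$. Since $\omega$ has constant rank (condition 2), $\CC$ is a genuine distribution. Conditions 3, 4, and 5 of Definition \ref{def_Pfaffian_groupoid-form} translate verbatim into $s$-transversality, involutivity of $\CC\cap\ker(ds)$, and the Lie--Pfaffian identity $\CC\cap\ker(ds)=\CC\cap\ker(dt)$. The only non-immediate point is multiplicativity: I would show that $\ker(\omega)\subset T\Sigma$ is a wide subgroupoid over $T\BB$. Closure under multiplication follows directly from the multiplicativity formula for $\omega$, since for $(v_g,v_h)\in\ker(\omega)\times_{T\BB}\ker(\omega)$ one has $\omega(dm(v_g,v_h))=\omega(v_g)+g\cdot\omega(v_h)=0$. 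Closure under unit and inversion follow by specialising multiplicativity to $(u(x),u(x))$ and $(g,g^{-1})$ in the standard way.

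Conversely, given a distribution $\CC$ with the four listed properties, I would construct $(\omega,E)$ as follows. Set $\g_\BB:=(\CC\cap\ker(ds))|_\BB\subset A$; the Lie--Pfaffian condition guarantees that $\g_\BB$ is an ideal, and hence $E:=A/\g_\BB$ carries a canonical $\Sigma$-action by the formula of Proposition \ref{prp:adjoint-rep} (whose construction only uses the distribution $\CC$, not the form). To define $\omega\in\Omega^1(\Sigma,t^*E)$, I use $s$-transversality: every $v_g\in T_g\Sigma$ admits a (non-unique) decomposition $v_g = v_g^{\CC} + v_g^{\ker(ds)}$, and the ambiguity lies in $\CC\cap\ker(ds)$. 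Setting
\[
\omega(v_g):=\bigl[dR_{g^{-1}}(v_g^{\ker(ds)})\bigr]\in (A/\g_\BB)_{t(g)}
\]
is therefore well-defined, and the formula is independent of the choice of decomposition.

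The remaining work is to verify that this $\omega$ satisfies Definition \ref{def_Pfaffian_groupoid-form}. Pointwise surjectivity and the identification of the kernel of $\omega$ with $\CC$ are direct consequences of the construction, which also gives constant rank. Involutivity of the symbol and the Lie--Pfaffian condition are inherited from the corresponding properties of $\CC$. The step I expect to be the main obstacle is multiplicativity of $\omega$: this requires showing that for $(g,h)\in\Sigma\times_\BB\Sigma$ and $(v_g,v_h)\in T_g\Sigma\times_{T\BB}T_h\Sigma$, the identity
\[
\omega(dm(v_g,v_h))=\omega(v_g)+g\cdot\omega(v_h)
\]
holds. The argument will decompose $v_g$ and $v_h$ using $s$-transversality, use that $\CC\subset T\Sigma$ is a wide subgroupoid to see that the $\CC$-components of $v_g$ and $v_h$ multiply to a vector in $\CC$, and then identify the $\ker(ds)$-components with elements of $A$ via right translation; the action of $\Sigma$ on $A/\g_\BB$ is tailored exactly so that the composition formula matches. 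Finally, the two constructions are mutually inverse by inspection: starting from $\omega$, reading off $\CC=\ker(\omega)$ and reconstructing a form recovers $\omega$ (using Proposition \ref{prp:coefficent-space-is-alg} to identify $E$ canonically with $A/\g_\BB$ in the full case), and starting from $\CC$, the kernel of the reconstructed form is $\CC$ by definition.
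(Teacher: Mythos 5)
Your proposal is correct and follows essentially the same route as the paper, which takes $\CC=\ker(\omega)$ in one direction and in the other defines $\omega$ as the projection $T\G\to\nu_\CC\cong\ker(ds)/\g(\CC)$ followed by right translation — exactly your formula $\omega(v_g)=[dR_{g^{-1}}(v_g^{\ker(ds)})]$ (the paper defers the verifications to the literature). The only point worth making explicit in your write-up is that well-definedness of this formula uses that $dR_{g^{-1}}$ carries $\CC_g\cap\ker_g(ds)$ into $\g_\BB$ at $t(g)$, which follows from multiplicativity of $\CC$ applied to the composable pair $(v_g,0_{g^{-1}})$.
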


More precisely, one has the following facts (see~\cite{CrainicSalazarStruchiner15} for the details).
\begin{itemize}
\item Given a distribution $\CC$ on $\G$ as Proposition~\ref{prp:dist-picture}, the projection onto the normal bundle $T\G\to \nu_{\CC}$ post-composed with the right multiplication defines a form $\omega\in \Omega^1(\G, t^*(A/\g(\CC)|_\BB) )$. Such form $\omega$ makes $\G$ into a full Pfaffian groupoid.
\item On the other hand, given a full Pfaffian groupoid $(\G, \omega, E)$, the kernel $\ker(\omega)$ has the properties listed in Proposition~\ref{prp:dist-picture}.
\end{itemize}

Motivated by the above proposition, we give the following alternative definition of (not necessarily full) Pfaffian groupoid.

\begin{definition}[cf.\ Definition \ref{def_Pfaffian_groupoid-form}]\label{def_Pfaffian_groupoid-distribution}
A {\bf Pfaffian groupoid} $(\G, \CC, E)$ over $\BB$ is the datum of
\begin{itemize}
\item a Lie groupoid $\G\tto \BB$;
\item a regular distribution $\CC$ on $\Sigma$ that is:
\begin{itemize}
\item multiplicative;
\item transversal to $s$;
\item such that $\g(\CC):=\CC\cap \ker(ds)$ is involutive;
\item such that $\g(\CC)=\CC\cap \ker(dt)$;
\end{itemize}
\item a representation $E\to \BB$ of $\Sigma\tto \BB$ admitting the subrepresentation on $\nu_\CC|_\BB \to \BB$
\[
g\cdot [V_x]:= [dR_{g^{-1}}\cdot dm({\rm hor}^s_g(dt(V_x)), V_x)],
\]
where $g\in \Sigma$, $x=s(g)$, and $[V_x]\in \nu_\CC|_\BB$ is an element of the normal bundle of $\CC$ at $x\in \BB$ -- see Remark~\ref{rmk:explain_rep} for the notation.
\end{itemize}
We call $\g(\CC)$ the {\bf symbol bundle} of $(\G, \CC, E)$. A {\bf holonomic bisection} of $(\G, \CC, E)$ is a local bisection $\sigma:U\subset \BB \to \G$ tangent to the distribution $\CC$.

A Pfaffian groupoid $(\Sigma, \CC, E)$ is called {\bf full} if $E$ is isomorphic to $\nu_\CC|_\BB$ as representations.
\end{definition}

\begin{remark}\label{rmk:explain_rep}
Let us explain the notation appearing in the last point of Definition~\ref{def_Pfaffian_groupoid-distribution}. First of all, ${\rm hor}^s_g$ denotes any $s$-projectable lift tangent to $\CC$.

We observe that, given a Lie groupoid $\Sigma\tto \BB$, the normal bundle $\nu_\BB$ can be identified with $\ker(ds)|_\BB$, the total space of $A={\rm Lie}(\Sigma)$. If $(\G, \CC, E)$ is a Pfaffian groupoid in the sense of the previous definition, $\CC$ is transversal to $s$, and we may identify $\nu_\CC$ with $\nu_\BB/\mathfrak{g}(\CC)$. Consequently, equivalence classes in $\nu_\CC$ can be represented by $s$-vertical vectors. 
%In the last point, we are identifying $\nu|_\BB$ with the vertical bundle $\ker(ds)$ and representing elements of $(\nu_\CC)|_\BB$ as $s$-vertical vectors.
\end{remark}

%\begin{definition}[cf.\ Definition \ref{def_Lie-Pfaffian_groupoid-form}]\label{def_Lie-Pfaffian_groupoid-distribution}
%An {\bf extended Pfaffian groupoid} over $\BB$ is a Pfaffian groupoid $(\G, \CC, E)$ over $\BB$ such that $E\to \BB$ is a Lie algebroid and the injective representation map
%\[
%\nu_{\CC}|_\BB\hookrightarrow E, \quad [V_x]\mapsto \alpha_x
%\]
%is a Lie algebroid morphism (for the Lie algebroid structure on the left hand side see Proposition~\ref{prp:coefficent-space-is-alg} and Proposition~\ref{prp:dist-picture}).
%
%A Pfaffian groupoid $(\G,\CC, E)$ is {\bf full} if the last condition in Definition \ref{def_Pfaffian_groupoid-distribution} is trivially satisfied, i.e.\ the representation on $E$ coincides with the subrepresentation.
%\end{definition}

%It is useful to keep in mind the following fact, whose proof is a simple check.

%\begin{lemma}\label{lem:conn-as-pfaff} \textcolor{red}{It was used only in a commented part, we could probably erase it?}
%Let $(\Sigma,\CC, E)$ be a full Pfaffian groupoid. Then $\g(\CC) = 0$ if and only if the multiplicative distribution $\CC$ is an Ehresmann connection for the source map $s:\Sigma\to \BB$.
%\end{lemma}

In what follows, we will alternatively use the form description of Pfaffian groupoids from Definition~\ref{def_Pfaffian_groupoid-form} and the distribution one from Definition~\ref{def_Pfaffian_groupoid-distribution}, according to convenience. We will often use the notations $(\Sigma, \omega)$ and $(\Sigma, \CC)$ when working with full Pfaffian groupoids. In such a situation, the reader should always keep in mind that $\omega\in \Omega^1(\Sigma, t^*E)$ is a vector-valued form and its coefficent space is the pullback of a representation $E\to \BB$ of $\Sigma$ possessing a Lie algebroid structure (Proposition~\ref{prp:coefficent-space-is-alg}); moreover, $E\cong A/\mathfrak{g}_\BB$ both as an algebroid and as a representation.
%When nothing is specified, translating concepts from the former to the latter or viceversa is either simple or not of significant importance for our purposes.
\begin{definition}\label{def:morphism_Pfaffian_groupoids}
Let  $(\G_1, \omega_1, E_1)$ and $(\G_2, \omega_2, E_2)$ be Pfaffian groupoids on $\BB_1$ and $\BB_2$, respectively.
 A {\bf morphism of Pfaffian groupoids} from $(\G_1, \omega_1, E_1)$ to $(\G_2, \omega_2, E_2)$ is a pair $(\Phi, \Psi)$, where $\Phi: \G_1 \to \G_2$ is a Lie groupoid morphism (Definition \ref{def_morphisms_groupoids}) and $\Psi: E_1 \to E_2$ is a morphism of representations (see below), such that $\Phi$ and $\Psi$ lift the same map $\phi:\BB_1\to \BB_2$ and 
 \[
 \Phi^*\omega_2 = \Psi \circ \omega_1.
 \] 
 %A {\bf morphism} between Pfaffian groupoids is a weak morphism which is also a surjective submersion.
% If both groupoids are extended Pfaffian and $\Psi$ is also an isomorphism of algebroids, then $(\Psi, \Phi)$ is a {\bf morphism of extended Pfaffian goupoids}.
\end{definition}
By ``morphism of representations'' we mean a vector bundle morphism $\Psi$ from $\pi_1:E_1\to \BB_1$ to $\pi_2:E_2\to \BB_2$ covering $\phi:\BB_1\to \BB_2$, such that
\[
\Psi(g_1\cdot e_1)=\Phi(g_1)\cdot \Psi(e_1)
\]
for all $e_1\in E_1$, $g_1\in \G_1$ such that $\pi_1(g_1)=s(g_1)$.
%\begin{remark}
%It is customary to add a surjectivity requirement on $\phi$. We do not need such a condition, hence we omit it.
%\end{remark}
%\begin{example}\label{ex:gauge_sf_cont_1}
%The groupoids $J^1\Gamma_{Sp(k, 1)}$ and $J^1\Gamma_{\rm cont}$ from the section above are isomorphic but not Pfaffian isomorphic.
%\end{example}

Notice that, in the above definition, if both $(\Sigma_1, \omega_1)$ and $(\Sigma_2, \omega_2)$ are full Pfaffian groupoids then $\Psi$ is uniquely determined by $\Phi$. In fact, $E_i\cong A_i/(\g_i)_\BB$, therefore $\Psi$ is forced to be induced by the map ${\rm Lie}(\Phi):A_1\to A_2$ from Proposition~\ref{prp:morph-algebroid}.
%\begin{remark}\label{remark:invertible_Pfaffian_morphism}
%If a morphism $\phi: \G_1 \to \G_2$ between Pfaffian groupoids is invertible (as a groupoid morphism), then its inverse is a morphism as well.
%\end{remark}

\begin{example}[Main example]\label{exm:jet-groupoids}
As anticipated at the end of Section~\ref{sec:intro_section}, when $\Gamma$ is a Lie pseudogroup over $\BB$, the jet groupoid $J^k\Gamma\tto \BB$ is a Pfaffian groupoid for all $k\geq k_\Gamma$ (see Definition~\ref{def:Lie_pseudo}). The Pfaffian form is given by the restriction to $J^k\Gamma$ of the Cartan form $\omega^k$ on $J^k(\BB\times \BB)$, from Proposition \ref{prp:cart-dist}. It is an instructive exercise to check that such restriction has constant rank and has all the properties listed in Definition~\ref{def_Pfaffian_groupoid-form}. Moreover, by Lemma \ref{lemma_holonomic_bisection_jet_pseudogroup}, the holonomic bisections of $(J^k \Gamma,\omega^k)$ are in bijection with the elements of $\Gamma$, for any $k$ greated or equal than the order of $\Gamma$ (Definition \ref{def:order}).

Notice also that the projection $\pr: (J^{k+1}\Gamma, \omega^{k+1})\to (J^{k}\Gamma, \omega^{k})$ is a morphism of Pfaffian groupoids for all $k\in \mathbb{N}$, $k\geq k_\Gamma$. 
%It is in fact a prolongation of Pfaffian groupoids; see Definition~\ref{def:prolongation} and example~\ref{exm:jet-groupoids-prol}.
Actually, the symbol bundle $\g(\omega^{k+1})$ of $(J^{k+1}\Gamma,\omega^{k+1})$ is given precisely by the vertical bundle of the projection $\pr$ restricted to $\BB$ -- as one sees by Proposition \ref{prp:cart-dist}. It follows that the image of the restriction of $\omega^{k+1}$ on $J^{k+1}\Gamma$ is isomorphic to
\[
{\rm Lie}(J^{k+1}\Gamma)/ \g(\omega^{k+1})|_\BB \cong {\rm Lie}(J^{k}\Gamma).
\]
Consequently, the Pfaffian groupoid $(J^{k+1}\Gamma, \omega^{k+1}, {\rm Lie}(J^{k}\Gamma))$ is full. 
%As for the image of $\ker(\omega^{k+1})$ in $J^{k}\Gamma$, one can check directly that it defines a multiplicative connection on $J^{k}\Gamma$.  Moreover, the curvature of such connection takes value in $\ker(\omega^{k})$. 

For more details see, for instance,~\cite{MARIA,ORI}.
\end{example}

A second class of examples is explored in the next subsection.

\subsection{Pfaffian groups}\label{subs:Pf-groups}

Recall that the {\bf Maurer-Cartan form} $\omega_{\rm MC}$ of a Lie groupoid $\Sigma\tto \BB$ is the $s$-foliated $t^*A$-valued form
\[
\omega_{\rm MC}: \ker(ds) \to t^*A\cong t^*\left(\ker(ds)|_\BB\right), \quad  V_g\mapsto (g, dR_{g^{-1}}(V_g)),
\] 
equivalently encoded into the bundle map
\[
\begin{tikzcd}
\ker(ds)\arrow[r, "\omega_{\rm MC}"]\arrow[d] & A\cong \ker(ds)|_\BB\arrow[d]\\
\Sigma\arrow[r, "t"] & \BB 
\end{tikzcd}
\]
sending $V_g$ to $dR_{g^{-1}}(V_g)\in A_{t(g)}$.

\begin{example}
When $\Sigma\tto \BB$ is a Lie group $G$, the Maurer-Cartan form becomes the usual Maurer-Cartan form of a Lie group (defined using right multiplication).
\end{example}

\begin{lemma}~\label{lemma:Pfaffian_and_MC}
Let $(\G, \omega, E)$ be a Pfaffian groupoid. The map $\omega|_{\ker(ds)}: \ker(ds)\to E$ factors as $l\circ \omega_{\rm MC}$, where $l:A\to E$ is defined as $\omega|_\BB\circ \iota_A$. 
\end{lemma}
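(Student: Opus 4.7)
The plan is to verify the identity $\omega(V_g) = l(\omega_{\rm MC}(V_g))$ pointwise for an arbitrary $V_g \in \ker_g(ds)$; the stated factorisation then follows by $\mathbb R$-linearity. Both sides land in $E_{t(g)}$: unwinding the definition of $l$, the right-hand side equals $\omega|_\BB(\iota_A(dR_{g^{-1}}(V_g)))$, where we view $dR_{g^{-1}}(V_g) \in A_{t(g)}$ inside $T_{1_{t(g)}}\Sigma$ via the inclusion $\iota_A$. Writing $y = t(g)$, the task reduces to showing $\omega(V_g) = \omega(\widetilde V)$, with $\widetilde V := dR_{g^{-1}}(V_g)$.

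The key tool is the multiplicativity of $\omega$ applied to the trivial factorisation $g = 1_y \cdot g$. The plan is to consider the composable pair $(1_y, g) \in \Sigma \times_\BB \Sigma$ together with the tangent vector $(\widetilde V, 0_g)$. The compatibility condition $ds(\widetilde V) = 0 = dt(0_g)$ is immediate, and one checks that $dm(\widetilde V, 0_g) = dR_g(\widetilde V) = V_g$: indeed, multiplication restricted to the slice $(t^{-1}(y) \cap s^{-1}(y)) \times \{g\}$ agrees with right translation $R_g$, so differentiating at $(1_y, g)$ along a curve through $1_y$ tangent to $\widetilde V$ yields $dR_g(\widetilde V)$, which recovers $V_g$ by definition of $\widetilde V$.

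Multiplicativity of $\omega$ at $(1_y, g)$ then gives
\[
\omega(V_g) = \omega(dm(\widetilde V, 0_g)) = \omega(\widetilde V) + 1_y \cdot \omega(0_g) = \omega(\widetilde V),
\]
using $\omega(0_g) = 0$ and that $1_y$ acts as the identity on $E_y$. Since $\widetilde V \in A_y$ sits in $T_\BB\Sigma|_y$ via the unit, the right-hand side equals $\omega|_\BB(\iota_A(\widetilde V)) = l(\widetilde V) = l(\omega_{\rm MC}(V_g))$, closing the argument.

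The only real obstacle is choosing a composable decomposition that makes multiplicativity useful: it has to force one of the two $\omega$-summands to vanish, leaving the other to compute $\omega(V_g)$ intrinsically. The factorisation $g = 1_y \cdot g$ accomplishes exactly this through $\omega(0_g) = 0$, packaging the whole content of the lemma into a one-line application of multiplicativity once the Maurer--Cartan vector has been placed at the unit side. No further analytic or algebraic difficulty arises.
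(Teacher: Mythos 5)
Your proposal is correct and takes essentially the same route as the paper: a single application of the multiplicativity identity to a composable pair carrying a zero tangent vector in one slot, which yields the right-invariance $\omega(V_g)=\omega(dR_{g^{-1}}(V_g))$ and hence the factorisation through $\omega_{\rm MC}$ and $l=\omega|_\BB\circ\iota_A$. The only cosmetic differences are that you apply multiplicativity at $(1_y,g)$ to $(\widetilde V,0_g)$ whereas the paper uses $(g,g^{-1})$ with $(V_g,0_{g^{-1}})$, and that the slice you invoke to compute $dm(\widetilde V,0_g)$ should be $s^{-1}(y)\times\{g\}$ rather than $\bigl(t^{-1}(y)\cap s^{-1}(y)\bigr)\times\{g\}$, since $\widetilde V\in\ker_{1_y}(ds)$ need not be tangent to the isotropy group -- the conclusion $dm(\widetilde V,0_g)=dR_g(\widetilde V)$ is unaffected.
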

In the statement above, $\iota_A$ denotes the inclusion of the Lie algebroid $A\cong \ker(ds)|_\BB$ into $T\Sigma_\BB$, and $\omega|_\BB$ is the restriction of the vector bundle map $\omega:T\Sigma\to t^*E$ to the image of $\BB$ in $\Sigma$ via the unit bisection.
%In the statement above, $\omega|_\BB$ denotes the restriction of $\omega$ to $T_\BB\Sigma$, the pullback of $T\Sigma$ to $\BB$ by the unit map.
\begin{proof}
Given
\[
\omega: T\G\to E, \quad v_g\mapsto \omega(v_g).
\]
we use the multiplicativity equation
\[
(m^*\omega)_{(g, h)}=(\pr_1^*\omega)_{(g, h)}+g\cdot (\pr_2^*\omega)_{(g, h)}
\]
on a pair of vectors of the form $(v_g, 0_{g^{-1}})$, $v_g\in \ker_g(ds)$.
We get
\[
\omega(dR_{g^{-1}}(v_g))=\omega(v_g),\quad v_g\in \ker_g(ds),
\]
which implies the claim.
\end{proof}

The following Proposition and its proof should be compared with Example $3.4.6$ in~\cite{FRANCESCO}.

\begin{proposition}[Characterisation of Pfaffian groups]\label{prp:char-pf-groups}
Let $G$ be a Lie group and $V$ be a representation of $G$. A form $\omega\in \Omega^1(G, V)$ makes the triple $(G, \omega, V)$ into a Pfaffian group over the point if and only if
\begin{itemize}
\item $\omega$ is right invariant;
\item $\omega_e:\g\to V$
is a map of representations (with $\g$ carrying the adjoint representation).
\end{itemize}
In fact, there is a one to one correspondence between Pfaffian group structures $(G, \omega, V)$ and representation maps
\[
l:\g\to V.
\]
\end{proposition}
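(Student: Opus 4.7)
The plan is to reduce the Pfaffian group axioms (Definition \ref{def_Pfaffian_groupoid-form}) to tractable conditions on $\omega$. Since $\BB$ is a point, the source and target are constant, so $\ker(ds)=\ker(dt)=TG$; this makes the $s$-transversality condition and the equality $\ker(\omega)\cap\ker(dt)=\ker(\omega)\cap\ker(ds)$ automatic, and identifies the symbol $\mathfrak{g}(\omega)$ with $\ker(\omega)\subset TG$. Only multiplicativity, constant rank, and involutivity of $\ker(\omega)$ survive as nontrivial requirements to check.

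The heart of the argument is that multiplicativity is equivalent to the conjunction of right-invariance and the statement that $\omega_e:\mathfrak{g}\to V$ is a map of $G$-representations. Using the standard identity $dm_{(g_0,h_0)}(X,Y)=dR_{h_0}(X)+dL_{g_0}(Y)$, testing multiplicativity at $(e,h)$ with $Y=0$ gives $\omega_h(dR_h X)=\omega_e(X)$ for all $X\in\mathfrak{g}$, which is right-invariance; testing at $(g,e)$ with $X=0$ gives $\omega_g(dL_g Y)=g\cdot\omega_e(Y)$, which rewrites via right-invariance as $\omega_e(\Ad_g Y)=g\cdot\omega_e(Y)$, i.e.\ the equivariance of $\omega_e$. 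For the converse direction — which I expect to be the only nontrivial computation — one decomposes the inputs using right translation from the identity, expands $dm$, and uses the identity $R_{(gh)^{-1}}\circ L_g\circ R_h=c_g$ to bring the $dL_g Y$-term into the form $\omega_e(\Ad_g(\cdot))$, which by equivariance reduces to the desired $g\cdot\omega_h(Y)$.

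The remaining Pfaffian axioms come for free once the above is established. Constant rank is immediate, since $\omega_g=\omega_e\circ dR_{g^{-1}}$ and $dR_{g^{-1}}$ is a linear isomorphism, hence $\rank(\omega_g)=\rank(\omega_e)$ at every $g$. Involutivity of the right-invariant distribution $\ker(\omega)$ is equivalent to $\ker(\omega_e)\subset\mathfrak{g}$ being a Lie subalgebra; but the kernel of a morphism of representations is a $G$-subrepresentation, hence $\Ad$-invariant, and differentiating yields $[\mathfrak{g},\ker(\omega_e)]\subset\ker(\omega_e)$, so it is in fact an ideal, consistent with Remark \ref{symbol_space_is_ideal}. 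Finally, the correspondence with representation maps $l:\mathfrak{g}\to V$ follows from the elementary bijection between right-invariant $V$-valued $1$-forms on $G$ and linear maps $\mathfrak{g}\to V$ given by evaluation at $e$; under this bijection, the Pfaffian condition on $\omega$ matches exactly the condition that $l$ be equivariant.
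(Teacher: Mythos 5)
Your proof is correct and follows essentially the same route as the paper's: both reduce the statement to the equivalence between multiplicativity of $\omega$ and the conjunction of right-invariance with equivariance of $\omega_e$, obtained by testing the multiplicativity identity on vectors of the form $(v_g,0)$ and $(0,v_h)$ and using that over a point the splitting $dm(X,Y)=dR_h(X)+dL_g(Y)$ is available for arbitrary pairs. You are in fact slightly more thorough than the paper, which leaves implicit the verification that the remaining Pfaffian axioms (constant rank, involutivity of the symbol, $s$-transversality, and $\ker(\omega)\cap\ker(dt)=\ker(\omega)\cap\ker(ds)$) are either automatic over a point or follow from right-invariance and the $\Ad$-invariance of $\ker(\omega_e)$.
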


According to the correspondence above, the symbol bundle $\g(\omega)$ of a Pfaffian group corresponds to $\ker(l)$.

\begin{proof}
If $(G, \omega, V)$ is a Pfaffian group, then $\omega$ is right invariant and $\omega_e$ is a map of representations thanks to multiplicativity. In fact
\[
(m^*\omega)_{(g, h)}=(\pr_1^*\omega)_{(g, h)}+g\cdot (\pr_2^*\omega)_{(g, h)}, \quad g,\ h\in G
\]
implies:
\begin{itemize}
\item left equivariance 
\[
\omega_{gh}\circ dL_g=g\cdot \omega_h,\quad g,\ h\in G;
\]
\item right invariance
\[
\omega_{gh}\circ dR_h=\omega_{g}\quad g,\ h\in G;
\]
\end{itemize}
from which it follows
\[
\omega_e\circ dL_g\circ dR_{g^{-1}}=g\cdot \omega_e,
\]
i.e.\ $\omega_e$ is a map of representations. Conversely, if $\omega_e$ is a map of representations and $\omega$ is right invariant, then $\omega$ is also left equivariant. Multiplicativity of $\omega$ follows since
\[
(m^*\omega)_{(g, h)}(v_g, v_h)=(m^*\omega)_{(g, h)}(v_g, 0_h)+(m^*\omega)_{(g, h)}(0_g, v_h).
\]
Notice that for this last claim we use the fact that $G$ is a Lie group, i.e.\ its unit manifold is the point.

%Now notice that if $(G, \omega, V)$ is a Pfaffian group, $\ker(\omega_e)$ is an ideal in $\g$ -- as it follows from Proposition~\ref{prp:coefficent-space-is-alg}: the quotient $\g/\ker(\omega_e)$ is a Lie algebra such that the quotient map is a Lie algebra map.

If now
\[
l:\g\to V
\]
is a map of representations, then there is one and only one induced right invariant form $\omega_l\in \Omega^1(G, V)$ such that $(\omega_l)_e=l$. Consequently, the one to one correspondence from the claim holds true.
\end{proof}

\begin{corollary}\label{cor:Pfaffian-groups-MC}
Let $(G, \omega, V)$ be a Pfaffian group. Then
\[
\omega=l \circ \omega_{\rm MC}
\]
where $l:\g\to V$ is the map of representations from Proposition~\ref{prp:char-pf-groups} -- i.e.\ $\omega_e$.
\end{corollary}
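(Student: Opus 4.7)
The plan is to simply specialise Lemma~\ref{lemma:Pfaffian_and_MC} to the case of a Lie group $G$ viewed as a Lie groupoid over the point. Since everything is in place, the corollary should essentially follow with no extra work; the content is just unpacking the definitions in this special case.

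First, I would observe that when $\BB$ is a point, the source map $s: G \to \{*\}$ is constant, so $\ker(ds) = TG$ and consequently the restriction $\omega|_{\ker(ds)}$ coincides with $\omega$ on all of $TG$. Next, the Lie algebroid $A$ of $G \tto \{*\}$ is precisely the Lie algebra $\g = T_e G$, and the bundle inclusion $\iota_A$ becomes the identification of $\g$ with $T_e G \subset TG$. Therefore the map $l: A \to E$ from Lemma~\ref{lemma:Pfaffian_and_MC}, defined as $\omega|_{\BB} \circ \iota_A$, reduces to $\omega_e: \g \to V$, which by Proposition~\ref{prp:char-pf-groups} is indeed the map of representations underlying the Pfaffian group structure.

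Putting these observations together, Lemma~\ref{lemma:Pfaffian_and_MC} gives
\[
\omega = \omega|_{\ker(ds)} = l \circ \omega_{\rm MC} = \omega_e \circ \omega_{\rm MC},
\]
which is exactly the claimed factorisation. There is no real obstacle here: the only thing to verify is that the Maurer-Cartan form of $G$ as a groupoid over the point recovers the ordinary group-theoretic Maurer-Cartan form, which was already noted in the example preceding Lemma~\ref{lemma:Pfaffian_and_MC}. Hence the corollary is a direct instance of the lemma.
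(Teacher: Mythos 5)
Your proposal is correct and follows exactly the paper's own route: the paper proves the corollary by citing Lemma~\ref{lemma:Pfaffian_and_MC} together with Proposition~\ref{prp:char-pf-groups}, and your argument just spells out the specialisation to the case $\BB = \{*\}$, where $\ker(ds) = TG$ and $l = \omega|_\BB \circ \iota_A$ reduces to $\omega_e$. Nothing is missing.
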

\begin{proof}
By Lemma~\ref{lemma:Pfaffian_and_MC} together with Proposition~\ref{prp:char-pf-groups}.
\end{proof}

\begin{remark}\label{rmk:Pf-form-inv-equiv}
If $(\Sigma, \omega, E)$ is a Pfaffian groupoid, then multiplicativity implies that
\begin{itemize}
\item $\omega|_{\ker(ds)}$ is right invariant;
\item $\omega|_{\ker(dt)}$ is left equivariant;
 \item for all $x\in \BB$ the form $\omega_x|_{\Sigma_x}:T_x\Sigma_x\to E_x$ is a map of representations;
 \end{itemize}
 by the same computations as in the above proposition. However, these properties do not characterise Pfaffian forms on Lie groupoids. Notice that the fact that $\omega_x|_{\Sigma_x}$ is a map of representations was already a consequence of Proposition~\ref{prp:coefficent-space-is-alg}. See also~\cite[Lemma 3.4.11]{FRANCESCO}. 
\end{remark}
%\begin{remark}
%Expanding on the remark above, and in the spirit of Lemma~\ref{lemma:Pfaffian_and_MC}, we stress that is possible to recover the multiplicative form $\omega$ uniquely from a vector bundle map $l:A\to E$ by requiring the following:
%\begin{itemize}
%\item $\ker(l)\subset A$ is an ideal and sits in the isotropy Lie algebras at each point;
%\item $E\to TM$ is a representation of $\Sigma$ compatible with the adjoint representation of the isotropies;
%\item ???
%\end{itemize}
%\end{remark}
\begin{proposition}[Isotropies of Pfaffian groupoids]\label{prp:isotropies-pf-groups}
Let $(\Sigma, \omega, E)$ be a Pfaffian groupoid over $\BB$. Then the triple $(\Sigma_x, \omega|_{\Sigma_x}, E_x)$ is a Pfaffian group. Furthermore for all $x, y\in \BB$ and $g\in \Sigma$ such that $s(g)=x$, $t(g)=y$ the isomorphism of Lie groups
\[
\phi_g: \Sigma_x\to \Sigma_y, \quad h\mapsto ghg^{-1}
\]
and the isomorphism of representations
\[\psi_g: E_x\to E_y, \quad \alpha_x\mapsto g\cdot \alpha_y\]
make $(\psi_g, \phi_g)$ into an isomorphism of Pfaffian groups from $(\Sigma_x, \omega|_{\Sigma_x}, E_x)$ to $(\Sigma_y, \omega|_{\Sigma_y}, E_y)$.
\end{proposition}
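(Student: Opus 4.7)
The plan is twofold. First, I would establish that $(\Sigma_x, \omega|_{\Sigma_x}, E_x)$ is indeed a Pfaffian group by appealing to the characterisation in Proposition~\ref{prp:char-pf-groups}. Second, I would verify the three compatibility conditions making $(\psi_g, \phi_g)$ into an isomorphism of Pfaffian groups; the non-trivial one, namely $\phi_g^*(\omega|_{\Sigma_y}) = \psi_g \circ \omega|_{\Sigma_x}$, I would reduce to a direct computation using multiplicativity of $\omega$ applied twice.

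For the first step, note that $E_x$ is naturally a $\Sigma_x$-representation (as $\Sigma_x$ consists of arrows $x\to x$ and $E\to\BB$ is a $\Sigma$-representation). By Proposition~\ref{prp:char-pf-groups}, it suffices to check that $\omega|_{\Sigma_x}$ is right invariant and that $\omega_{e_x}\colon \mathfrak{g}_x \to E_x$ is a morphism of $\Sigma_x$-representations. Since $T\Sigma_x \subset \ker(ds)\cap \ker(dt)$, both statements are immediate consequences of Remark~\ref{rmk:Pf-form-inv-equiv}: the first bullet there gives right invariance of $\omega|_{\ker(ds)}$, which in particular restricts to right invariance along $\Sigma_x$, while the third bullet gives exactly that $\omega_x|_{\Sigma_x}$ is a representation morphism.

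For the second step, the maps $\phi_g$ and $\psi_g$ are by construction a Lie group isomorphism and a linear isomorphism respectively. That $\psi_g$ intertwines the representations follows from the associativity of the $\Sigma$-action:
\[
\psi_g(h\cdot \alpha) = g\cdot(h\cdot \alpha) = (ghg^{-1})\cdot(g\cdot \alpha) = \phi_g(h)\cdot \psi_g(\alpha).
\]
For the form compatibility, fix $h\in\Sigma_x$ and $v\in T_h\Sigma_x$, and write $d\phi_g(v) = dR_{g^{-1}}\bigl(dL_g(v)\bigr)$. Applying multiplicativity of $\omega$ to the composable pair $(g,h)$ on the vector $(0_g, v)$ yields
\[
\omega_{gh}\bigl(dL_g(v)\bigr) = \omega_g(0_g) + g\cdot \omega_h(v) = g\cdot \omega_h(v),
\]
and applying it a second time to the composable pair $(gh, g^{-1})$ on the vector $(dL_g(v), 0_{g^{-1}})$ yields
\[
\omega_{ghg^{-1}}\bigl(dR_{g^{-1}}dL_g(v)\bigr) = \omega_{gh}\bigl(dL_g(v)\bigr) + gh\cdot \omega_{g^{-1}}(0_{g^{-1}}) = g\cdot \omega_h(v) = \psi_g\bigl(\omega_h(v)\bigr).
\]
This is precisely $\phi_g^*(\omega|_{\Sigma_y}) = \psi_g\circ \omega|_{\Sigma_x}$.

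No genuine obstacle is expected: the entire argument is bookkeeping with the multiplicativity identity. The only point requiring mild care is tracking source/target matching so that each instance of left and right translation, and each evaluation of the representation action $g\cdot(-)$, takes place in the correct fibre of $E$; once this is in order, the two applications of multiplicativity deliver the desired formula automatically.
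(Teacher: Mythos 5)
Your proposal is correct and follows essentially the same route as the paper: the paper's own proof is a two-line sketch declaring the first claim immediate and deducing the second from the right-invariance and left-equivariance of $\omega$ recorded in Remark~\ref{rmk:Pf-form-inv-equiv}, which is precisely what your two applications of the multiplicativity identity establish explicitly. Your version just spells out the computation that the paper leaves to the reader.
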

\begin{proof}
The fact that the triple $(\Sigma_x, \omega|_{\Sigma_x}, E_x)$ is a Pfaffian group is immediate. The fact that the coniugation by $g$ is a Pfaffian isomorphism follows from the equivariance and invariance properties of $\omega$ coming from multiplicativity, see Remark~\ref{rmk:Pf-form-inv-equiv}.
\end{proof}
\begin{remark}\label{remark:isotropy-not-full-not-Lie}
Let $(\Sigma, \omega, E)$ be a {\it full} Pfaffian groupoid over $\BB$ -- i.e.\ $\omega$ is pointwise surjective. Its isotropy Pfaffian group $(\Sigma_x, \omega|_{\Sigma_x}, E_x)$ is almost never full; that happens if and only if the Lie groupoid $\Sigma$ is a bundle of Lie groups over $\BB$.
%Let $(\Sigma, \omega, E)$ be an {\it extended} Pfaffian groupoid over $\BB$ (Definition~\ref{def_Lie-Pfaffian_groupoid-form}). The triple $(\Sigma_x, \omega|_{\Sigma_x}, E_x)$ is almost never an extended Pfaffian group in such a way that the inclusion map induces an extended Pfaffian morphism; that happens if and only if the Lie algebroid $E$ is a bundle of Lie algebras over $\BB$.
\end{remark}

\subsection{Principal Pfaffian bundles}

Let $(\G, \omega, E)$ be a Pfaffian groupoid over $\BB$ (Definition \ref{def_Pfaffian_groupoid-form}).

\begin{definition}[cf.\ Definition \ref{def:princ-pfaff-bundles-dist}]\label{def:princ-pfaff-bundles}
A {\bf principal $(\G, \omega, E)$-bundle} over $M$ is a principal $\G$-bundle $\pi:P\to M$ with moment map $\mu:P\to \BB$ together with a 
%pointwise surjective 
1-form $\theta\in \Omega^1(P, \mu^*E)$ such that 
\begin{itemize}
\item the form $\theta$ is {\bf multiplicative} with respect to the action $m_P:\G\times_\BB P\to P$ of $\G$ and the form $\omega$, i.e.\
\[
(m_P^*\theta)_{(g, p)}=(\pr_1^*\omega)_{(g,p)}+g\cdot (\pr_2^*\theta)_{(g, p)},\quad (g, p)\in \G\times_\BB P;
\]
\item $\ker(d\mu)\cap \ker(\theta)=\ker(d\pi)\cap \ker(\theta)$.
\end{itemize}
The principal bundle is called {\bf full} if $\theta\in \Omega^1(P, \mu^*E)$ is pointwise surjective. %A local section $\sigma:U\subset M\to P$ of $\pi$ is called {\bf holonomic} if $\sigma^*\theta=0$.

When the Pfaffian groupoid $(\G, \omega, E)$ is clear from the context, we will often use the simpler terminology {\bf principal Pfaffian bundles}.
\end{definition}

We will picture a principal $(\G, \omega, E)$-bundle as
\[
\xymatrix{
(\Sigma, \omega, E) \ar@<0.25pc>[dr] \ar@<-0.25pc>[dr]  & \ar@(dl, ul) &  (P, \theta) \ar[dl]^{\mu}\ar[dr]^{\pi} &    \\
&\BB  & &  M.}
\]

Of course, an analogous definition can be given where the groupoid acts from the right -- see also Remark~\ref{remark:mullt-from-right} later on.

\begin{remark}\label{rmk:inf_action_Pfaffian}
The multiplicativity of $\theta$ implies $\theta(a(\alpha))=\omega(\alpha)$ for all $\alpha\in \ker(ds) \cong t^*A$. As a consequence, the isomorphism $\mu^*\ker(ds)\cong \ker(d\pi)$ induced by the infinitesimal $\Sigma$-action on $P$ (Remark \ref{infinitesimal_action}) restricts to an isomorphism
 \[
\mu^*(\ker(\omega)\cap\ker(ds))\cong \ker(\theta)\cap \ker(d\pi). \qedhere
\]
We conclude that $\ker(d\pi)\cap \ker(\theta)$ is involutive since $\g(\omega) = \ker(\omega) \cap \ker(ds)$ is so.
\end{remark}

\begin{remark}
We observe that fullness of $(P, \theta)$ follows from fullness of $(\Sigma, \omega, E)$. On the other hand, if $\ker(\theta)$ is required to be transverse to $\ker(d\pi)$ -- i.e.\ if $(P, \theta)$ is a {\bf Pfaffian fibration} according to~\cite[Definition 3.1]{CATTMARIUSMARIA} -- then fullness of $(\Sigma, \omega, E)$ follows from fullness of $(P, \theta)$. At a first sight, this seems to suggest that the definition of principal ``Pfaffian'' bundle could be modified requiring $\ker(\theta)$ to be transverse to $\ker(d\pi)$ (a property satisfied by $(\Sigma, \omega, E)$ acting on itself from the left). However, some examples that we think are of great importance are ruled out by such a definition. Most notably, the action of the isotropy group $\Sigma_x$ at $x\in \BB$ of a (full!) principal $(\Sigma, \omega, E)$-bundle on $s^{-1}(x)$ does not give a principal $(\Sigma_x, \omega|_{\Sigma_x}, E_x)$-bundle when equipped with the form $\omega|_{s^{-1}(x)}$. Compare also with Theorem~\ref{prop_transitive_Pfaffian_Morita_equivalence}.
\end{remark}

As for principal bundles, the definition of morphism between principal Pfaffian bundles is quite natural, provided that the structure groupoid remains the same.

\begin{definition}\label{def:princ-pfaff-bundles-morph}
Let $(P_1, \theta_1)$ and $(P_2, \theta_2)$ be principal $(\Sigma, \omega, E)$-bundles. A {\bf morphism of principal $(\Sigma, \omega, E)$-bundles}
\[
(P_1, \theta_1) \to (P_2, \theta_2)
\]
consists of a morphism of principal $\Sigma$-bundles
\[
\Psi:P_1\to P_2
\]
such that $\Psi^*(\theta_2)=\theta_1$.
\end{definition}
\begin{remark}
Just like for principal $\Sigma$-bundles, any morphism of principal $(\Sigma, \omega, E)$-bundles is an isomorphism.
\end{remark}

Considering a Pfaffian groupoid $(\G, \CC, E)$ in terms of distributions (Definition~\ref{def_Pfaffian_groupoid-distribution}) we have the following equivalent definition of principal Pfaffian bundle.
\begin{definition}[cf.\ Definition \ref{def:princ-pfaff-bundles}]\label{def:princ-pfaff-bundles-dist}
A {\bf principal $(\G, \CC, E)$-bundle} over $M$ is a principal $\G$-bundle $\pi:P\to M$ with moment map $\mu:P\to \BB$ together with a distribution $\CC_P\subset TP$ such that 
\begin{itemize}
\item the distribution $\CC_P$ is {\bf multiplicative} with respect to the action $m_P:\G\times_\BB P\to P$ of $\G$ and the distribution $\CC$, i.e.\ $\CC\cdot \CC_P\subset \CC_P$ (here $\cdot$ denotes the differential of the action map);
\item the injection $\mu^*(\CC\cap\ker(ds))\hookrightarrow \CC_P\cap \ker(d\pi)$ induced by the infinitesimal action is an isomorphism;
\item $\ker(d\mu)\cap \CC_P=\ker(d\pi)\cap \CC_P$.
\end{itemize}
It is called {\bf full} when the inclusion (see Remark~\ref{rmk:full_pp_dist_well_given} below) 
\[
\nu_{\CC_P\cap \ker(d\pi)}\hookrightarrow \mu^*E
\]
of the normal bundle of $\CC_P\cap \ker(d\pi)$ in $\ker(d\pi)$ induced by the infinitesimal action extends to an isomorphism
\[
\nu_{\CC_P}\cong \mu^*E.
\]
\end{definition}
We will use the notation
\[
\xymatrix{
(\Sigma, \CC, E) \ar@<0.25pc>[dr] \ar@<-0.25pc>[dr]  & \ar@(dl, ul) &  (P, \CC_P) \ar[dl]^{\mu}\ar[dr]^{\pi} &    \\
&\BB  & &  M,}
\]
\begin{remark}\label{rmk:full_pp_dist_well_given}
Recall that $(\nu_{\CC})|_\BB$ is a vector subbundle of $E$ (by Definition~\ref{def_Pfaffian_groupoid-distribution} of Pfaffian groupoid). If we require the infinitesimal action, cf.\ Remark~\ref{rmk:inf_action_Pfaffian}, to induce an isomorphism
\[
\mu^*(\CC\cap\ker(ds))\cong \CC_P\cap \ker(d\pi),
\]
then the normal bundle of $\CC_P\cap \ker(d\pi)$ in $\ker(d\pi)$ can be seen as a vector subbundle of $\mu^*E$ and the Definition~\ref{def:princ-pfaff-bundles-dist} above makes sense.
\end{remark}

%\ \newline
%{
%\color{red} {\bf Remark.} One needs to specify that, in the distribution POV, an isomorphism of pullbacks of normal bundles (which are representations of $\G\ltimes P$)/the Pfaffian condition (full case) or an isomorphism of the representations have to be specified. 
%
%One also need to say that the two symbol space coincide, so that the complement is always Lie Pfaffian.
%}

The following statement, proved in~\cite[Proposition 5.4.3]{FRANCESCO}, adapts the gauge construction from Proposition \ref{prop_gauge_construction} to the Pfaffian setting.

\begin{proposition}\label{prop:Pfaffian_Gauge_construction}
Let $(\G, \omega, E)$ be a Pfaffian groupoid over $\BB$ and $(P, \theta)$ a (left) principal $(\G, \omega, E)$-bundle over $M$. The gauge groupoid $\Gaug(P) = (P\times_M P)/\G\tto M$ can be endowed with a 1-form $\omega_{\rm gauge}$ and a representation $E_{\rm gauge}$ in such a way that the right action on $P$ makes $(P, \theta)$ into a (right) principal $(\Gaug(P), \omega_{\rm gauge}, E_{\rm gauge})$-bundle.
\end{proposition}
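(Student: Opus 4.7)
The plan is to mimic the classical gauge construction (Proposition~\ref{prop_gauge_construction}) while carrying along the Pfaffian data. First I would produce the representation $E_{\rm gauge}$, then the form $\omega_{\rm gauge}$, and finally verify the axioms of Definition~\ref{def_Pfaffian_groupoid-form} for $(\Gaug(P), \omega_{\rm gauge}, E_{\rm gauge})$ together with those of Definition~\ref{def:princ-pfaff-bundles} for $(P,\theta)$ as a right principal bundle.

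Define $E_{\rm gauge} := (\mu^*E)/\Sigma$, where $\Sigma$ acts diagonally on $\mu^*E \to P$ by $g\cdot(p,e) := (g\cdot p, g\cdot e)$. Since $\Sigma$ acts freely and properly on $P$, this is a vector bundle over $M = P/\Sigma$, equipped with a tautological isomorphism $\pi^*E_{\rm gauge} \cong \mu^*E$ of $\Sigma$-equivariant bundles; the right $\Gaug(P)$-action on $E_{\rm gauge}$ is the canonical one afforded by $\Gaug(P) = (P\times_\BB P)/\Sigma$, namely $[p_1,p_2]\cdot[p_2,e] := [p_1,e]$, which is well defined since $\mu(p_1)=\mu(p_2)$. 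The form $\omega_{\rm gauge} \in \Omega^1(\Gaug(P), t^*E_{\rm gauge})$ is then defined by requiring the multiplicativity of $\theta$ under the right $\Gaug(P)$-action to hold tautologically: for $V_\gamma \in T_\gamma\Gaug(P)$, pick $p \in \pi^{-1}(s(\gamma))$ and any $w \in T_pP$ with $d\pi(w)=ds(V_\gamma)$, so that $(w,V_\gamma) \in T_{(p,\gamma)}(P\times_M\Gaug(P))$, and set
\[
\omega_{{\rm gauge},\gamma}(V_\gamma) \; := \; \theta_{p\cdot\gamma}\bigl(dm_P(w,V_\gamma)\bigr) - \theta_p(w),
\]
with both terms viewed in the common fibre $E_{\mu(p)}=E_{\mu(p\cdot\gamma)}$ via $\pi^*E_{\rm gauge}\cong\mu^*E$.

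The crucial check is the well-definedness of $\omega_{\rm gauge}$. Independence from $w$ is immediate from Remark~\ref{rmk:inf_action_Pfaffian}: two lifts differ by an infinitesimal $\Sigma$-action $a(\alpha)$ with $\alpha \in A_{\mu(p)}$, and the commutativity of the left $\Sigma$- and right $\Gaug(P)$-actions on $P$, combined with the identity $\theta\circ a = \omega$, makes the two extra contributions cancel. Independence from $p$ uses the multiplicativity of $\theta$ under the left $\Sigma$-action: replacing $(p,w)$ by $(g\cdot p,\, g\cdot w)$ produces values related by the $\Sigma$-action of $g$, which is exactly the transition between the identifications $(E_{\rm gauge})_{t(\gamma)} \cong E_{\mu(p\cdot\gamma)}$ via $p\cdot\gamma$ and via $(g\cdot p)\cdot\gamma = g\cdot(p\cdot\gamma)$.

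With $\omega_{\rm gauge}$ in hand, the remaining axioms are essentially ``transported'' versions of known properties of $(\Sigma,\omega,E)$ and $(P,\theta)$ via the orbit diffeomorphisms $r_p\colon s^{-1}(\pi(p)) \to \mu^{-1}(\mu(p))$, $\eta\mapsto p\cdot\eta$. Multiplicativity of $\theta$ under $\Gaug(P)$ is built into the definition, and multiplicativity of $\omega_{\rm gauge}$ follows from applying the defining formula twice and exploiting the commutativity of the two actions. Constant rank transports pointwise from that of $\omega$, after which $s$-transversality is automatic (as noted after Definition~\ref{def_Pfaffian_groupoid-form}). Involutivity of $\g(\omega_{\rm gauge})$ and the identity $\ker(\omega_{\rm gauge})\cap\ker(ds_{\rm gauge}) = \ker(\omega_{\rm gauge})\cap\ker(dt_{\rm gauge})$ transport analogously. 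Finally, the kernel-intersection axiom for $(P,\theta)$ as a right principal $\Gaug(P)$-bundle coincides with the one it already satisfies as a left principal $\Sigma$-bundle, with the roles of $\mu$ and $\pi$ interchanged. The heart of the argument is thus the well-definedness of $\omega_{\rm gauge}$; everything else amounts to careful but mechanical bookkeeping of coefficient-bundle identifications.
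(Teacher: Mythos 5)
Your construction coincides with the one the paper uses (deferring the details to \cite[Proposition 5.4.3]{FRANCESCO} but spelling out the ingredients in the proof of Theorem~\ref{prp:PME_is_gauge_construction}): $E_{\rm gauge}=\mu^*E/\Sigma$ with the induced representation, and $\omega_{\rm gauge}$ the descent of the basic form $\pr_2^*\theta-\pr_1^*\theta$ on $P\times_\BB P$ --- your pointwise formula $\theta_{p\cdot\gamma}(dm_P(w,V_\gamma))-\theta_p(w)$ is exactly that descent written fibrewise, and your two well-definedness checks are the horizontality and equivariance of the form upstairs. So the proposal is correct and follows essentially the same route as the paper; only minor bookkeeping (e.g.\ the constant-rank verification for $\omega_{\rm gauge}$) is left at the same level of detail as in the source.
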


%Recall that, when $\G$ is transitive and $x$ is a point in the base $M$, one has $s^{-1}(x)\otimes_{\G_x} s{-1}(x)\cong \G $; the isomorphism sends $(g_1, g_2)$ to $g_1\cdot g_2^{-1}$. To build an inverse $\G\to s^{-1}(x)\otimes_{\G_x} s{-1}(x)$ one picks a trivialising atlas of the principal $\G_x$-bundle $s^{-1}(x)$ and uses the associated division maps. 

We conclude this section by recalling some examples.

\begin{example}\label{ex:trivial_pfaffian_bundle}
The action of a Pfaffian groupoid $(\G, \omega, E)$ on itself by left multiplication yields by definition a principal $(\G, \omega, E)$-bundle.
\end{example}

\begin{example}[Main example]
If $\Gamma$ is a Lie pseudogroup over $\BB$, any almost $\Gamma$-structure of order $k$ (Definition~\ref{def:almost-gamma-structure})
\[
\xymatrix{
J^k\Gamma \ar@<0.25pc>[dr] \ar@<-0.25pc>[dr]  & \ar@(dl, ul) &  P \ar[dl]^{\mu}\ar[dr]^{\pi} &    \\
&\BB  & &  M,}
\]
can be given the structure of full principal $J^k\Gamma$-bundle by taking the restriction to $P$ of the Cartan distribution on $J^k(M\times\BB)$ (see subsection~\ref{app:jets}).
%If $P$ is integrable, then $P=J^k\mathcal{A}$ for some $\Gamma$-atlas $\mathcal{A}$ on $M$, Definition~\ref{def:almost_gamma_struct}. In this case, around each point $j^k_x\phi\in J^k\mathcal{A}$ we have the section
%\[
%y\in \dom(\phi)\subset M\mapsto j^k_y\phi,
%\]
%which is holonomic. This property is in fact equivalent to integrability of the $\Gamma$-structure $P$. That is, if, for each $p$, one has a holonomic section of $(P, \theta)$ through $p$, then $P\subset J^k(M, \BB)$ is integrable in the sense of Definition~\ref{def:almost_gamma_struct}. In fact, given an open cover $\{U_i\}_{i\in I}$ of $M$ such that for each $U_i$ one has a holonomic section $\sigma:U\subset M\to P\subset J^k(M,\BB)$, one can construct the $\Gamma$-atlas $\{(U_i, \mu\circ \sigma_i)\}_{i\in I}$.
\end{example}

In the next subsection, we explore one further class of examples: principal $(G,\omega,V)$-bundles, where $(G,\omega,V)$ is a Pfaffian group. This is the example most of the literature following Lie's work focused on.

\subsection{Principal Pfaffian bundles -- the group case}\label{subsection_Cartan_bundles}

In this subsection, we will make use of the notation $(G, \h, V)$ for Pfaffian groups (see Proposition~\ref{subs:Pf-groups} for Pfaffian groups). In other words, we will make use of the distribution description of Pfaffian groupoids from Proposition~\ref{prp:dist-picture} and Definition~\ref{def_Pfaffian_groupoid-distribution}, and we will restrict the Pfaffian distribution to the identity, getting a subspace $\h\subset \g$. We recall here that, if $(\G,\omega,E)$ is a Pfaffian groupoid with Pfaffian form $\omega\in \Omega^1(\G,E)$, one can pass to the distribution description by setting $\CC:= \ker(\omega)$. If the Pfaffian groupoid is in fact a Pfaffian group $(G,\omega,V)$, the multiplicativity of $\CC$ implies that $\CC$ can be reconstructed from the subspace $\h\subset \g$. 

Notice that, on Pfaffian groups, the Pfaffian distribution coincide with the symbol bundle (cf.\ Definition~\ref{def_Pfaffian_groupoid-distribution}). Consequently, the subspace $\h\subset \g$ is the restriction of the symbol bundle of the Pfaffian group $(G, \h, V)$ to the identity. From the properties of the symbol bundle it follows that $\h$ is a Lie subalgebra of $\g$ and, by Proposition~\ref{prp:coefficent-space-is-alg}, an ideal of $\g$.

Throughout this subsection, we allow the Pfaffian group $(G, \h, V)$ to be non-full (see Remark~\ref{remark:isotropy-not-full-not-Lie}), but we restrict to full principal $(G, \h, V)$-bundles $(P,\theta)$ -- i.e.\ $\theta\in \Omega^1(P,V)$ is pointwise surjective.

\begin{axiom}{F}\label{axiom_full}
Let $(G, \h, V)$ be a Pfaffian group. From now on, unless otherwise specified, $(G, \h, V)$-bundles $(P,\theta)$ are assumed to be full -- i.e.\ $\theta$ is assumed to be pointwise surjective, see Definition~\ref{def:princ-pfaff-bundles}. 
\end{axiom}

Below, when $p:Y\to M$ is a surjective submersion and $\mathcal{F}$ is a foliation on $Y$, we say that $\mathcal{F}$ is {\bf vertical} if the leaves of $\mathcal{F}$ are tangent to the vertical bundle of $p$.

\begin{proposition}\label{equivalence_Cartan_bundles_foliations}
A principal $(G,\h, V)$-bundle over $M$ is equivalent to the datum of a principal $G$-bundle $P$ over $M$ (action from the left) together with an $G$-equivariant vertical foliation $\F$ on $P$ and an isomorphism $\Phi: \nu_{\F} \overset{\cong}{\to} P\times V$ such that 
\[
\Phi([\alpha^\dagger_p])=(p, l(\alpha)), \quad \alpha\in \g.
\]

Alternatively, a principal $(G,\h, V)$-bundle over $M$ is equivalent to the datum of a principal $G$-bundle $P$ over $M$ together with an equivariant 1-form $\theta\in \Omega^1(P, V)$ such that $\ker(\theta)$ is involutive, is vertical, and 
\[
\theta(\alpha^\dagger_p)=l(\alpha).
\]
\end{proposition}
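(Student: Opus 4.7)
The plan is to reduce the Proposition to Definition~\ref{def:princ-pfaff-bundles} in the group case ($\BB$ a point) by showing that the second (form) description spells out the definition directly, and then to exhibit a bijection between the form and the foliation descriptions.

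First I would reduce Definition~\ref{def:princ-pfaff-bundles} to the form description. Since $\BB$ is a point, the moment map $\mu: P \to \BB$ is trivial, $\mu^*V = P\times V$, and the condition $\ker(d\mu)\cap \ker(\theta) = \ker(d\pi)\cap \ker(\theta)$ collapses to $\ker(\theta) \subset \ker(d\pi)$, i.e.\ verticality. Multiplicativity of $\theta$ with respect to $\omega$ is then unpacked along the same lines as in Proposition~\ref{prp:char-pf-groups}: testing on pairs $(v_g, 0_p)$ with $g = e$ yields the infinitesimal identity $\theta(\alpha^\dagger_p) = \omega_e(\alpha) = l(\alpha)$ (using Corollary~\ref{cor:Pfaffian-groups-MC}), while testing on $(0_g, v_p)$ yields $G$-equivariance $L_g^*\theta = g\cdot \theta$. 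Conversely, a general tangent vector at $(g,p)$ splits as a sum of one in each summand, so bilinearity of the two terms in the multiplicativity formula combined with equivariance and the infinitesimal identity implies multiplicativity. Involutivity of $\ker(\theta)$ is then automatic from its verticality and Remark~\ref{rmk:inf_action_Pfaffian}.

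Next I would establish the bijection between the form and foliation descriptions. Given $\theta$ as in the form description, set $\F := \ker(\theta)$: verticality, involutivity and $G$-equivariance are inherited directly. Fullness (Axiom~\ref{axiom_full}) makes $\theta$ pointwise surjective with kernel $\F$, so $\theta$ descends to a $G$-equivariant vector bundle isomorphism $\Phi: \nu_\F \overset{\cong}{\to} P\times V$, and the compatibility $\Phi([\alpha^\dagger_p]) = (p, l(\alpha))$ is just a rephrasing of $\theta(\alpha^\dagger_p) = l(\alpha)$. Conversely, given $(\F, \Phi)$, define $\theta \in \Omega^1(P, V)$ as the composition of the quotient $TP\twoheadrightarrow \nu_\F$ with $\Phi$ and the second projection $P\times V \to V$; then $\ker(\theta) = \F$ by construction, the infinitesimal condition is tautological, and $G$-equivariance of $\theta$ follows from the equivariance of both $\F$ and $\Phi$.

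The main obstacle I expect is the verification of the equivalence multiplicativity $\Leftrightarrow$ (equivariance $+$ infinitesimal identity). Although the argument parallels Proposition~\ref{prp:char-pf-groups}, here one leg of the product lives in $P$ rather than $G$, so one must check that the $G$-action on $V$ intertwines correctly with $l:\g\to V$ when splitting $(v_g, v_p) = (v_g, 0_p) + (0_g, v_p)$. This hinges on $l$ being a map of representations, which is built into the Pfaffian group structure on $(G,\h, V)$. All remaining steps reduce to clean bookkeeping, thanks to $\BB$ being a point and Axiom~\ref{axiom_full} ensuring that $\theta$ is pointwise surjective with kernel exactly $\F$.
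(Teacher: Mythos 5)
Your proposal is correct and matches the paper's argument in substance: the paper runs the same verification starting from the distribution picture (multiplicativity gives equivariance, verticality plus the subalgebra property of $\h$ gives involutivity, fullness via Remark~\ref{rmk:full_pp_dist_well_given} gives the isomorphism $\Phi$, and $\theta(\alpha^\dagger_p)=l(\alpha)$ is read off from multiplicativity), then notes that the form description is dual. Your form-first unpacking, modelled on Proposition~\ref{prp:char-pf-groups}, is just the dual presentation of the same proof; the only needed ingredient for splitting $(v_g,v_p)$ is the right-invariance of $\omega=l\circ\omega_{\rm MC}$, which is built into the Pfaffian group structure as you say.
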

\begin{proof}
A principal Pfaffian $(G, \h, V)$-bundle is given by a principal $G$-bundle $P$ with a vertical distribution $\CC_P$ such that the action is multiplicative.
Multiplicativity implies the equivariance of $\CC_P$. Involutivity follows combining multiplicativity and verticality, since $\h$ is a subalgebra of $\g$. In other words, the foliation $\F$ from the claim is tangent to $\CC_P$ and coincides with the foliation $\F_\h$ given by the image of the ideal $\h\subset \g$ under the infinitesimal action. The isomorphism $\Phi: \nu_{\F}\overset{\cong}{\to} P\times V$ comes from the isomorphism between the pullback of $V$ to $P$ and $\nu_{\mathcal{F}}$ from Remark \ref{rmk:full_pp_dist_well_given}, which is compatible with the infinitesimal action.

The form description is dual to the foliation one. Notice that $\theta(\alpha^\dagger_p)=l(\alpha)$ is an immediate consequence of multiplicativity.
\end{proof}

\begin{definition}\label{def:Cartan-bundle}
Principal $(G,\h, V)$-bundles are called {\bf Cartan bundles}. 
\end{definition}
When $(P, \theta)$ is a principal $(G,\h, V)$-bundle, we will also write {\bf Cartan $(G,\h, V)$-bundle} to emphasise the Pfaffian group $(G,\h, V)$.

Cartan bundles appeared first in~\cite{FRANCESCOPAPER}. The reason for the name comes from the fact that, after Lie's seminal work~\cite{LieEngel88TrGroupAll}, it was Cartan~\cite{CARTANINFINITEGROUPS} who first studied (transitive!) Lie pseudogroups and their ``actions'' in great generality (i.e.\ allowing for pseudogroups that do not arise from Lie group actions). Moreover, the so called ``Cartan geometries''~ \cite{SHARPE,CAPSLOVAK,ALEKSEEVSKYMICHOR} -- that is, principal bundles possessing a {\it Cartan connection} -- are a prominent example of Cartan bundle (see Example~ \ref{exm:Cart-geo} below).
%We will sometimes use the notation $(P, \F_\h, \Phi)$ or $(P, \theta_\h)$, $\theta_\h \in \Omega^1(P, V)$, since the foliation $\F_\h$ -- or, dually, the form $\theta_\h$ -- is generated by the subalgebra $\h\subset \g$. 

\begin{example}[G-structures]\label{exm:G-struct}
The case when $\h=0$ (i.e., $\h$ is as small as possible) is already rather interesting. Let $\pi:(P,\theta)\to M$ be a full principal $(G,0,V)$-bundle over $M$. Since $\theta\in \Omega^1(P,V)$ is pointwise surjective, we get $\dim(V) = \dim(M)$. Due to multiplicativity of the $G$-action, $\theta$ is automatically $G$-equivariant. Moreover, since $\h=0$, $\ker(\theta) = \ker(d\pi)$. This is the setting of what are sometimes called {\bf abstract $G$-structures}. 

Indeed, recall that, when $G\subset \GL(n,\mathbb{R})$, a {\bf classical $G$-structure} is a $G$-reduction of the frame bundle $\Fr(M)\to M$ (which is canonically a principal $\GL(n,\mathbb{R})$ bundle over $M$). Recall also that any classical $G$-structure is equipped with the {\bf tautological (or soldering) form} $\theta_{\rm taut}\in \Omega^1(P,\mathbb{R}^n)$ defined by:
\[
(\theta_{\rm taut})_p: v_p \mapsto p\cdot d\pi(v),\quad v_p\in T_pP,
\]
where $p\in P\subset \Fr(M)$ is a frame at $\pi(p)\in M$, i.e.\ a linear isomorphism $p:T_{\pi(p)}M\to \mathbb{R}^n$. See e.g.~\cite{STERNBERG} for more details. One readily checks that $\theta_{\rm taut}$ is pointwise surjective, equivariant, and $\ker(\theta_{\rm taut}) = \ker(d\pi)$. The tautological form is the fundamental object underlying the geometry of $G$-structures. An abstract $G$-structure $(P,\theta)$ consists precisely in a principal bundle $P$ equipped with an $\mathbb{R}^n$-valued differential 1-form $\theta$ having the same key properties of the tautological form.
\end{example}

\begin{example}[Cartan geometries]\label{exm:Cart-geo}
Consider now the other extreme case, $\h=\g$ (i.e., $\h$ is as large as possible). Let $\pi:(P,\theta)\to M$ be a full principal $(G,\g,V)$-bundle over $M$. Due to $\theta\in \Omega^1(P,V)$ being pointwise surjective, we get $\dim(V) = \dim(P)$. Due to multiplicativity of the $G$-action, $\theta$ is automatically $G$-equivariant. Moreover, since $\h=\g$, $\ker(\theta) = 0$. In other words, $\theta$ defines a $V$-valued equivariant absolute parallelism 
\[
\theta: TP \to P\times V.
\]
In the literature, such parallelisms are called {\bf Cartan connections} and have been the subject of much attention, e.g.~\cite{SHARPE,CAPSLOVAK,ALEKSEEVSKYMICHOR}. The pair $(P,\theta)$ is also called {\bf Cartan geometry}\footnote{We warn the reader that, in the literature on Cartan geometries, $V$ is also assumed to be a Lie algebra $\z$ extending $\g$. However, the Lie bracket on $V$ plays a role only when introducing the {\it curvature} of $\theta$; a treatment of this issues goes beyond the scope of this paper, is dealt in full generality of Cartan bundle in \cite{AccorneroCattafi2}.}.
\end{example}

As the examples above show, already in these apparently ``trivial'' cases, Cartan bundles correspond to objects that have been extensively studied in the literature (and the same holds when considering ``intermediate'' $\h$). This is not a coincidence. Most of the literature on pseudogroups that stemmed from Lie's seminal work~\cite{LieEngel88TrGroupAll} and Cartan's fundamental subsequent developments~\cite{CARTANINFINITEGROUPS} takes transitivity (Definition~\ref{def:transitive-orbits}) as part of the definition of Lie pseudogroup. However, when $\Gamma$ is a transitive Lie pseudogroup, almost $\Gamma$-structures can be equivalently presented as Cartan bundles. Intuitively, such equivalence is due to the fact that, due to transitivity, no information is lost when ``passing to the isotropy''. In Section~\ref{sec:PME}, we rigorously prove a categorical equivalence between almost $\Gamma$-structures of order $k$ and Cartan bundles, as an application of the notion of Pfaffian Morita equivalence. A similar conclusion was reached in~\cite{FRANCESCOPAPER} using a different approach.
\begin{remark}
Cartan bundles exhibit a very rich geometry and provide a nice and powerful framework to encode and study transitive geometries, to be compared e.g.\ with~\cite{SingerSternberg65,GUILLEMINSTERNBERG}. The approach from Section~\ref{sec:PME} down below -- where one sees Cartan bundles as objects arising from principal Pfaffian bundles of transitive Pfaffian groupoids -- providea several insights that are not available otherwise. We undertook an extensive treatment of these matters in~\cite{AccorneroCattafi2}.
\end{remark}
\section{Pfaffian Morita equivalences}\label{sec:PME}

\subsection{Principal Pfaffian bibundles and relation with the gauge construction}

Let $\Sigma\tto \BB$ be a Lie groupoid. Recall that, when $\mu:P\to \BB$ is a left $\Sigma$-space (Definition \ref{def_groupoid_action}), we can form the {\bf action groupoid} $\Sigma\ltimes P\tto P$. The arrow space is given by $\Sigma\ltimes P=\Sigma\tensor[_s]{\times}{_\mu}P$, the source map is the second projection and the target map sends $(g, p)$ with $\mu(p)=s(g)$ to $g\cdot p$. The multiplicaton is defined by 
\[
(h,  g\cdot p)\cdot (g, p)=(hg, p)
\]
for all $h, g\in \Sigma$, $p\in P$ with $\mu(p)=s(g)$, $t(g)=s(h)$. Similarly, when $\mu:P\to \BB$ is a right $\Sigma$-space, one can define the action groupoid $P\rtimes \Sigma \tto P$.

\begin{definition}\label{def:princ-Pfaff-bibundle}
Let $(\Sigma_1, \omega_1, E_1)$ and $(\Sigma_2, \omega_2, E_2)$ be Pfaffian groupoids over $\BB_1$ and $\BB_2$ respectively. A {\bf principal Pfaffian bibundle} between them is a triple $(P, \theta, \Phi)$ where
\begin{itemize}
\item $(P, \theta)$ is a left principal $(\Sigma_1, \omega_1, E_1)$-bundle;
\item $\Phi:\mu_1^*E_1\to \mu_2^*E_2$ is an isomorphism of vector bundles;
\end{itemize} 
such that 
\begin{itemize}
\item $(P, \Phi\circ \theta)$ is a right principal $(\Sigma_2, \omega_2, E_2)$-bundle;
\item the two actions on $P$ commute;
\item $\Phi:\mu_1^*E_1\to \mu_2^*E_2$ is a morphism of
\begin{itemize} 
\item left $\Sigma_1\ltimes P$-representations, where
\[
\Sigma_1\ltimes P \times_P \mu_1^*E_1 \to \mu_1^*E_1, \quad \left ((g_1, p), (p, \alpha_{\mu_1(p)})\right)\mapsto \left(g_1\cdot p, g_1\cdot \alpha_{\mu_1(p)}\right)
\]
is the representation naturally induced on $\mu_1^*E_1$, while $\mu_2^*E_2$ carries the trivial representation
\[
\Sigma_1\ltimes P \times_P \mu_2^*E_2 \to \mu_2^*E_2, \quad \left( (g_1, p), (p, \alpha_{\mu_2(p)})\right)\mapsto \left(g_1\cdot p, \alpha_{\mu_2(p)}\right);
\]
\item right $P\rtimes \Sigma_2$-representations, where
\[
P\rtimes \Sigma_2 \times_P \mu_2^*E_2 \to \mu_2^*E_2, \quad \left ((p, g_2), (p, \alpha_{\mu_2(p)})\right)\mapsto \left(p\cdot g_2, g^{-1}_2\cdot \alpha_{\mu_2(p)}\right)
\]
is the representation naturally induced on $\mu_2^*E_2$, while $\mu_1^*E_1$ carries the trivial representation
\[
 P\rtimes \Sigma_2 \times_P \mu_1^*E_1 \to \mu_1^*E_1, \quad \left( (p, g_2), (p, \alpha_{\mu_2(p)})\right)\mapsto \left( p\cdot g_2, \alpha_{\mu_1(p)}\right).
\]
\end{itemize}
\end{itemize}
The bibundle is called {\bf full} if $\theta$ is pointwise surjective. 
\end{definition}
We denote principal Pfaffian bibundles using the ``decorated'' butterfly
\[
\xymatrix{
(\G_1, \omega_1, E_1) \ar@<0.25pc>[dr] \ar@<-0.25pc>[dr]  & \ar@(dl, ul) &  (P, \theta, \Phi) \ar[dl]^{\mu_1}\ar[dr]^{\mu_2} & \ar@(dr, ur) & (\G_2, \omega_2, E_2) \ar@<0.25pc>[dl] \ar@<-0.25pc>[dl] \\
&\BB_1  & &  \BB_2 &}.
\]
\begin{remark}\label{rmk:Phi-explicit-prop}
%The property of $\Phi$ can be encoded by saying that $\Phi:\mu_1^*E_1\to \mu_2^*E_2$ is a morphism of $\G_1\ltimes P$ and $P\rtimes \G_2$ representations, where $\mu_1^*E_1$ carries the representation induced by the representation $\G_1$ on $E_1$ and the trivial representation of $P\rtimes \G_2$, and similarly for $\mu_2^*E_2$.
The fact that the isomorphism $\Phi:\mu_1^*E_1\to \mu_2^*E_2$ is also a morphism of representations is equivalently encoded into the formulae
\[
\Phi_{p\cdot g_2}(p\cdot g_2, \alpha_{\mu_1(p\cdot g_2)})=g_2^{-1}\cdot \Phi_{p}(p, \alpha_{\mu_1(p)}), \quad (p, g_2)\in  P\rtimes \G_2,\ (p, \alpha_{\mu_1(p)})\in \mu_1^*E_1
\]
and
\[
\Phi_{p}(p, \alpha_{\mu_1(p)})=\Phi_{g_1\cdot p}(g_1\cdot p, g_1\cdot \alpha_{\mu_1(p)}), \quad (g_1, p)\in \G_1\ltimes P,\ (p, \alpha_{\mu_1(p)})\in \mu_1^*E_1. \qedhere
\]
\end{remark}
\begin{remark}
Notice that, in a principal Pfaffian bibundle, $\ker(\theta)\cap \ker(d\mu_1) = \ker(\theta)\cap \ker(d\mu_2)$.
\end{remark}
\begin{remark}\label{remark:mullt-from-right}
Notice that in a principal Pfaffian bibundle both actions are multiplicative.
Recall that the multiplicativity equation for the left action 
\[
(m_{\G_1}^*\theta)_{(g_1, p)}=(\pr_1^*\omega_1)_{(g_1, p)}+g_1\cdot (\pr_2^*\theta)_{(g_1, p)}
\]
is an equality of forms on $\G_1\ltimes P$.

%{\color{red} How does one correct for the right action?} 

As for the right action, the multiplicativity equation becomes the following equality of forms on $P\rtimes \G_2$:
\[
m^*_{\G_2}(\Phi\circ \theta)_{(p, g_2)}=g_2^{-1}\cdot (\pr_1^*(\Phi\circ \theta))_{(p, g_2)}+g_2^{-1}(\pr_2^* \omega_2 )_{(p, g_2)}
\]

This could look a bit puzzling. The reader is encouraged to look at the case of $(\G, \omega)$ acting on itself from the left and from the right and, in particular, at the multiplicativity equation satisfied by $(i^*\omega)_g=-g^{-1}\cdot \omega$. In such example, $\Phi$ is defined as $\Phi: v_{t(g)}\mapsto g^{-1}\cdot v_{t(g)}$.  
%A couple of words are due about multiplicativity of a right action. A point to consider is that, while the action of $\G_2$ on $P$ is from the right, the representation on the coefficient space of $\theta\in |omega^1(P, \mu_1^*E_1)$ is from the left.
\end{remark}
\begin{remark}\label{remark:distribution_POV}
The definition of principal Pfaffian bibundle can of course be reformulated using distributions instead of vector-valued 1-forms. There, the multiplicativity of the two actions is encoded by $\CC_1\cdot \CC_P\subset \CC_P$ and $\CC_P\cdot \CC_{2}\subset \CC_P$ (compare with Remark~\ref{remark:mullt-from-right} above). 
%If we have non-full Pfaffian groupoids $(\G_i, \CC_i, E_i)$, one replaces the isomorphisms of the pullbacks of the normal bundles with an isomorphism $\Phi$ of the pullbacks of the bundles $E_i$ that respect the representations. Furthermore, one can identify the $E_i$'s with bundles $N_i$'s containing the normal bundles of the $\CC_i$'s. 
\end{remark}
\begin{remark}
It can be illuminating to recast the definition of principal Pfaffian bibundle as follows. It is:
\begin{itemize}
\item a principal $(\Sigma_1, \omega_1, E_1)$-bundle $(P,\theta)$;
\item decorated by an isomorphism $\Phi:\mu_1^*E_1\to \mu^*_2E_2$ that makes $(P, \theta)$ into a principal $(\Sigma_2, \omega_2, E_2)$-bundle.
\end{itemize}
The resulting two principal Pfaffian bundles satisfies two additional compatibility properties:
\begin{itemize}
\item the actions on $P$ commute;
\item $\Phi$ is a representation map.
\end{itemize}  
The first property has nothing to do with Pfaffian groupoids, but it ensures that a principal Pfaffian bibundle is a bibundle in the usual sense, see Definition~\ref{def:classical-ME}. The second property should be thought of as a Pfaffian analogue of the first one (and it indeed plays a completely analogous role in later statements, see especially Proposition~\ref{prop:Pfaffian_principal_category}).
\end{remark}
If there is a principal Pfaffian bibundle between the Pfaffian groupoids $(\Sigma_1, \omega_1, E_1)$ and $(\Sigma_2, \omega_2, E_2)$, then $\Sigma_1$ and $\Sigma_2$ are Morita equivalent -- because principal Pfaffian bibundles are in particular bibundles.
\begin{definition}\label{def:Pfaffain-Morita-equivalence}
Two Pfaffian groupoids $(\G_1, \omega_1, E_1)$ and $(\G_2, \omega_2, E_2)$, over $\BB_1$ and $\BB_2$ respectively, are called {\bf Pfaffian Morita equivalent} if there exists a principal Pfaffian bibundle between them. The Morita equivalence is called {\bf full} when the bibundle is full.
\end{definition}
\begin{example}\label{exm:gauge_is_PME}
The gauge groupoid of a Pfaffian groupoid $(\G_1, \omega_1, E_1)$ with respect to a principal Pfaffian bundle $(P,\theta)$ (cf.\ Proposition~\ref{prop:Pfaffian_Gauge_construction}) is Pfaffian Morita equivalent to $(\G_1, \omega_1, E_1)$ by construction. As we will see below (cf.\ Theorem~\ref{prp:PME_is_gauge_construction}), all the examples of Pfaffian Morita equivalences take, up to isomorphism, this form.
\end{example}
The definition of Pfaffian Morita equivalence appeared first in~\cite[Section 5.4]{FRANCESCO}, where the following result is discussed. 
\begin{proposition}\label{prp:PME_is_eq_rel}
Pfaffian Morita equivalence is an equivalence relation.
\end{proposition}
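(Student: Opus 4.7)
For \emph{reflexivity}, take $P = \Sigma$ acting on itself by left and right multiplication, with moment maps $t$ and $s$, Pfaffian form $\omega$, and decorating isomorphism $\Phi_g : E_{t(g)}\to E_{s(g)},\ \alpha \mapsto g^{-1}\cdot \alpha$ following Remark \ref{remark:mullt-from-right}. The right-multiplicativity of $\Phi\circ\omega$ is then the identity $(i^*\omega)_g = -g^{-1}\cdot\omega_g$ recalled there, and equivariance of $\Phi$ with respect to the two representations of Definition \ref{def:princ-Pfaff-bibundle} is immediate. For \emph{symmetry}, given a bibundle $(P,\theta,\Phi)\colon (\Sigma_1,\omega_1,E_1) \to (\Sigma_2,\omega_2,E_2)$, the plan is to form the reversed bibundle $(P,\Phi\circ\theta,\Phi^{-1})\colon (\Sigma_2,\omega_2,E_2) \to (\Sigma_1,\omega_1,E_1)$; the two multiplicativity equations swap their roles, the kernel condition is symmetric in the moment maps, and the equivariance of $\Phi^{-1}$ is obtained by rearranging the two identities for $\Phi$ of Remark \ref{rmk:Phi-explicit-prop}.

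The main step is \emph{transitivity}. Given bibundles
\[
(P,\theta_P,\Phi_P)\colon (\Sigma_1,\omega_1,E_1)\to (\Sigma_2,\omega_2,E_2), \quad (Q,\theta_Q,\Phi_Q)\colon (\Sigma_2,\omega_2,E_2)\to (\Sigma_3,\omega_3,E_3),
\]
the plan is to set $P\otimes_{\Sigma_2}Q := (P\times_{\BB_2}Q)/\Sigma_2$, with $\Sigma_2$ acting diagonally by $g\cdot (p,q) = (p\cdot g^{-1},\, g\cdot q)$. The quotient is the classical composition of bibundles, hence already a principal $\Sigma_1$-$\Sigma_3$-bibundle. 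On $P\times_{\BB_2}Q$ the bundles $\pr_P^*(\mu_2^P)^*E_2$ and $\pr_Q^*(\mu_1^Q)^*E_2$ coincide, so one may form the $(\mu_1^P\circ\pr_P)^*E_1$-valued 1-form
\[
\eta := \pr_P^*\theta_P + (\pr_P^*\Phi_P)^{-1}\circ \pr_Q^*\theta_Q.
\]
Using the identity $\theta(a(\alpha)) = \omega(\alpha)$ from Remark \ref{rmk:inf_action_Pfaffian} applied to both the right $\Sigma_2$-action on $P$ and the left $\Sigma_2$-action on $Q$, a direct computation shows that $\eta$ kills the fundamental vector fields of the diagonal action, so $\eta$ descends to a form $\theta_{PQ}$ on $P\otimes_{\Sigma_2}Q$; the decoration $\Phi_{PQ}$ is defined by composing the descents of $\Phi_P$ and $\Phi_Q$ through the same identification.

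The main obstacle is then verifying each axiom of Definition \ref{def:princ-Pfaff-bibundle} for $(P\otimes_{\Sigma_2}Q,\theta_{PQ},\Phi_{PQ})$: multiplicativity against the residual actions of $\Sigma_1$ and $\Sigma_3$, the kernel identity $\ker(\theta_{PQ}) \cap \ker(d\mu_{PQ,1}) = \ker(\theta_{PQ}) \cap \ker(d\pi_{PQ})$, and the representation-map condition on $\Phi_{PQ}$. Each reduces, by descent, to the corresponding statement for $\eta$ on the fibre product, where it follows by combining the analogous properties of $(P,\theta_P,\Phi_P)$ and $(Q,\theta_Q,\Phi_Q)$. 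I expect the kernel identity to be the subtlest step, since it requires simultaneously tracking $\omega_2$-horizontal lifts in both factors.
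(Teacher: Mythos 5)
Your argument follows the paper's proof essentially verbatim: the paper likewise treats only transitivity as non-immediate, forms the quotient $(P\times_{\BB_2}Q)/\Sigma_2$ carrying the descended sum of the two Pfaffian forms (identified through $\Phi_P$), and defers the verification of the bibundle axioms to direct computation. One small precision: for $\eta$ to descend it must be basic, i.e.\ $\Sigma_2$-invariant in addition to killing the fundamental vector fields -- which holds exactly because you post-composed with $(\pr_P^*\Phi_P)^{-1}$ so that $\eta$ is valued in the pullback of $E_1$, on which the $\Sigma_2\ltimes(P\times_{\BB_2}Q)$-action is trivial and the $\omega_2$-contributions from the two multiplicativity equations cancel.
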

\begin{proof}
The only non-immediate step is to prove transitivity. 
%As in the case of ordinary Morita equivalences, the proof is based on the gauge construction, i.e.\ Proposition~\ref{prop:Pfaffian_Gauge_construction}. 
For the reader's convenience, below we sketch the main steps of the proof. Let us first fix notation: we will work with the principal Pfaffian bibundle 
\[
\xymatrix{
(\G_1, \omega_1, E_1) \ar@<0.25pc>[dr] \ar@<-0.25pc>[dr]  & \ar@(dl, ul) &  (P, \theta^P, \Phi^P) \ar[dl]^{\mu^P_1}\ar[dr]^{\mu^P_2} & \ar@(dr, ur) & (\G_2, \omega_2, E_2) \ar@<0.25pc>[dl] \ar@<-0.25pc>[dl] \\
&\BB_1  & &  \BB_2 &}
\]
together with the principal Pfaffian bibundle
\[
\xymatrix{
(\G_2, \omega_2, E_2) \ar@<0.25pc>[dr] \ar@<-0.25pc>[dr]  & \ar@(dl, ul) &  (Q, \theta^Q, \Phi^Q) \ar[dl]^{\mu^Q_2}\ar[dr]^{\mu^Q_3} & \ar@(dr, ur) & (\G_3, \omega_3, E_3) \ar@<0.25pc>[dl] \ar@<-0.25pc>[dl] \\
&\BB_2  & &  \BB_3 &}.
\]
\begin{itemize}
\item One starts by observing that ordinary Morita equivalence is transitive. In fact, $(P\times_{\BB_2} Q)/\Sigma_2$ is a principal bibundle between $\Sigma_1$ and $\Sigma_3$.
\item The fibred product $P\times_{\BB_2} Q$ carries the form $\hat{\theta} = \theta_P+\theta_Q$. Such form is basic for the (left) action of $\Sigma_2$, and descends to a form $\bar{\theta}$ on the quotient $(P\times_{\BB_2} Q)/\Sigma_2$.
\item There exists an induced isomorphism of vector bundles $\bar{\Phi}$ between the pullbacks of $E_1$ and $E_3$ to $(P\times_{\BB_2} Q)/\Sigma_2$. By direct computation, one checks that it is an isomorphism of representations as in Definition~\ref{def:princ-Pfaff-bibundle}.
\item One shows -- again by direct computation -- that the action of $(\G_1, \omega_1, E_1)$ on $((P\times_{\BB_2} Q)/\Sigma_2,\bar{\theta})$ is a principal Pfaffian bundle; the same holds for the action of $(\G_3, \omega_3, E_3)$ on $((P\times_{\BB_2} Q)/\Sigma_2,\bar{\Phi}\circ\bar{\theta})$.
\end{itemize}
\end{proof}

Just as isomorphic Lie groupoids are Morita equivalent, Pfaffian isomorphic Pfaffian groupoids are Pfaffian Morita equivalent:
\begin{lemma}\label{lemma:morphism_which_are_PME}
Let $(\G_1, \omega_1, E_1)$, $(\G_2, \omega_2, E_2)$ be Pfaffian groupoids over $\BB_1$, $\BB_2$. An isomorphism of Pfaffian groupoids $(\Psi, \Phi)$ between them (Definition~\ref{def:morphism_Pfaffian_groupoids}) 
%such that $\Phi:\G_1\to \G_2$ is a Lie groupoid isomorphism lifting a diffeomorphism $\phi:\BB_1\to \BB_2$ 
induces naturally a principal Pfaffian bibundle. If $(\G_1, \omega_1, E_1)$ is full, the Pfaffian bibundle is full as well.
\end{lemma}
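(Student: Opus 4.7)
The plan is to adapt the classical proof that isomorphic Lie groupoids are Morita equivalent, and then decorate the resulting bibundle with Pfaffian data. First I would set $P := \Sigma_2$ as a manifold, with left moment map $\mu_1 := \psi^{-1} \circ t_2 : P \to \BB_1$, right moment map $\mu_2 := s_2 : P \to \BB_2$, left $\Sigma_1$-action $h \cdot g := \Psi(h) g$, and right $\Sigma_2$-action by multiplication. The two actions commute by associativity, and the left principality reduces, via the Lie groupoid isomorphism $\Psi$, to that of the standard self-action of $\Sigma_2$. This makes $P$ into an ordinary principal bibundle between $\Sigma_1$ and $\Sigma_2$.

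To upgrade this to a Pfaffian bibundle I would set
\[
\theta := \Phi^{-1} \circ \omega_2 \in \Omega^1(P, \mu_1^* E_1), \qquad \widetilde{\Phi}_g(\alpha) := g^{-1} \cdot \Phi(\alpha),
\]
where $\widetilde{\Phi}: \mu_1^* E_1 \to \mu_2^* E_2$ is to play the role of the vector bundle isomorphism in Definition~\ref{def:princ-Pfaff-bibundle}. The verification then splits into four points. First, multiplicativity of $\theta$ under the left $\Sigma_1$-action: writing $m_P = m_{\Sigma_2} \circ (\Psi \times \id)$ and combining multiplicativity of $\omega_2$, equivariance of $\Phi^{-1}$, and the defining identity $\Psi^* \omega_2 = \Phi \circ \omega_1$ of a Pfaffian morphism, one obtains $(m_P^* \theta)_{(h,g)} = (\omega_1)_h + h \cdot \theta_g$. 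Second, the kernel condition $\ker(d\mu_1) \cap \ker(\theta) = \ker(d\pi) \cap \ker(\theta)$ (with $\pi = s_2$) reduces, via $\ker(d\mu_1) = \ker(dt_2)$ and $\ker(\theta) = \ker(\omega_2)$, exactly to axiom (5) of Definition~\ref{def_Pfaffian_groupoid-form} for $(\Sigma_2, \omega_2, E_2)$. Third, $(P, \widetilde{\Phi} \circ \theta)$ is a right principal $(\Sigma_2, \omega_2, E_2)$-bundle, since direct computation gives $(\widetilde{\Phi} \circ \theta)_g = g^{-1} \cdot (\omega_2)_g$, which is the canonical Pfaffian form on $\Sigma_2$ seen as a right principal bundle over itself (compare the self-action discussion preceding Remark~\ref{remark:mullt-from-right}). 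Fourth, that $\widetilde{\Phi}$ intertwines the representations of Definition~\ref{def:princ-Pfaff-bibundle} reduces, via Remark~\ref{rmk:Phi-explicit-prop}, to the identities $\widetilde{\Phi}_{gk}(\alpha) = k^{-1} \cdot \widetilde{\Phi}_g(\alpha)$ and $\widetilde{\Phi}_{\Psi(h)g}(h \cdot \alpha) = \widetilde{\Phi}_g(\alpha)$, both of which follow by cancellation in $\Sigma_2$ together with the fact that $\Phi$ is an equivariant representation map covering $\Psi$.

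Fullness comes for free: if $\omega_1$ is pointwise surjective, so is $\Phi \circ \omega_1 = \Psi^* \omega_2$, hence $\omega_2$ is pointwise surjective (as $\Psi$ is a diffeomorphism), and therefore so is $\theta = \Phi^{-1} \circ \omega_2$. The main obstacle I anticipate is bookkeeping rather than any conceptual difficulty: one must track carefully the twist by $g^{-1}$ appearing in the right-action multiplicativity formula from Remark~\ref{remark:mullt-from-right}, and check that $\widetilde{\Phi}$ genuinely satisfies both of the representation-intertwining conditions of Definition~\ref{def:princ-Pfaff-bibundle} rather than only one of them.
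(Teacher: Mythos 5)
Your construction is correct and is, up to the obvious relabelling, the same bibundle as in the paper's proof: the paper takes the total space to be the arrow space of the \emph{first} groupoid carrying its own form, with the right action and the coefficient identification $(g,e_1)\mapsto (g,\Phi(g)^{-1}\cdot\Psi(e_1))$ twisted by the isomorphism, whereas you take the arrow space of the \emph{second} groupoid with the transported form $\Phi^{-1}\circ\omega_2$ and twist the left action instead; under the groupoid isomorphism and the identity $\Psi^*\omega_2=\Phi\circ\omega_1$ the two bibundles are identified (note also that your letters $\Psi$, $\Phi$ play the opposite roles to those in the paper's proof of this lemma). Your four verifications, the reduction of the kernel condition to axiom $(v)$ of Definition~\ref{def_Pfaffian_groupoid-form}, and the fullness argument are exactly the ``careful but straightforward computations'' the paper leaves to the reader.
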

\begin{proof}
The bibundle from the claim is $(\G_1, \omega_1, \hat{\Psi})$ with right action of $\G_2$ induced by the isomorphism of Lie groupoids $\Phi$, while the isomorphism $\hat{\Psi}$ is induced by $\Psi$:
\[
\hat{\Psi}: t^*E_1\to s^*\phi^*E_2, \quad (g, e_1) \mapsto (g, (\Phi(g))^{-1}\cdot (\Psi(e_1))).
\]
Here, $\phi:\BB_1\to \BB_2$ is the diffeomorphism of the base manifolds induced by $\Phi$. In other words, one looks at the butterfly diagram
\[
\xymatrix{
(\G_1, \omega_1, E_1) \ar@<0.25pc>[dr] \ar@<-0.25pc>[dr]  & \ar@(dl, ul) &  (\G_1, \omega_1, \hat{\Psi}) \ar[dl]^{\mu_1}\ar[dr]^{\mu_2} & \ar@(dr, ur) & (\G_2, \omega_2, E_2) \ar@<0.25pc>[dl] \ar@<-0.25pc>[dl] \\
&\BB_1  & &  \BB_2 &}.
\]
Careful (but straightforward -- it is enlightening to start with $\Phi=id$ and $(\G_1, \omega_1, E_1)=(\G_2, \omega_2, E_2)$, as in Example \ref{ex:trivial_pfaffian_bundle}) computations show that all the properties of a Pfaffian Morita equivalence are satisfied.
\end{proof}

As we already hinted at in Example~\ref{exm:gauge_is_PME}, and analogously to what happens for ordinary Morita equivalence (Theorem \ref{thm:gauge_eq_morita}), Pfaffian Morita equivalence can be completely encoded in terms of the gauge construction (cf.\ Proposition~\ref{prop:Pfaffian_Gauge_construction}). The following lemma is the first step in that direction.

\begin{lemma}\label{prp:Phi_descends}
Let $(\Sigma_1, \omega_1, E_1)$ and $(\Sigma_2, \omega_2, E_2)$ be Pfaffian groupoids over $\BB_1$ and $\BB_2$, respectively.
\begin{itemize}
 \item Let
\[
\BB_1\overset{\mu_1}{\lto} (P, \theta, \Phi)\overset{\mu_2}{\to} \BB_2
\] 
be the triple realising a Pfaffian Morita equivalence. Then $\Phi$ induces
%\begin{itemize}
%\item 
an isomorphism of $\Sigma_2$-representations 
\[
\bar{\Phi}:\mu_1^*E_1/(\G_1\ltimes P)\overset{\cong}{\to} E_2.
\] 
%(in fact, this imply the second point from proposition~\ref{prp:Pfaffian-transverse-geometry});
%\item an isomorphism of $\Sigma_2$-representations $T\BB_2\cong \mu_1^*T\BB_1/\G_1\ltimes P$
%\end{itemize}
\item Let $(P, \theta, \Phi)$ be a triple where
\begin{itemize}
\item $(P, \theta)$ is a (left) principal $(\Sigma_1, \omega_1, E_1)$-bundle;
\item $(P, \Phi\circ \theta)$ is a (right) principal $(\Sigma_2, \omega_2, E_2)$-bundle;
\item the two actions on $P$ commute;
\item $\Phi:\mu_1^*E_1\to \mu_2^*E_2$ is the map induced on the pullback bundles by an isomorphism of $\Sigma_2$-representations
\[
\bar{\Phi}:\mu_1^*E_1/(\G_1\ltimes P)\overset{\cong}{\to} E_2.
\]
\end{itemize}
Then $(P, \theta, \Phi)$ is a principal Pfaffian bibundle.
\end{itemize}
\end{lemma}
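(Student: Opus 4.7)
The plan is to use the two explicit equivariance formulas in Remark~\ref{rmk:Phi-explicit-prop} as the translation device between $\Phi$ (on the pullback bundles over $P$) and $\bar\Phi$ (on the quotient over $\BB_2$). The essential background fact to deploy is that in any principal bibundle, the $\Sigma_1$-action on $P$ is principal with orbit map $\mu_2:P\to\BB_2$ and preserves $\mu_2$, while the $\Sigma_2$-action on $P$ is principal with orbit map $\mu_1$ and preserves $\mu_1$; in particular fibers like $(E_2)_{\mu_2(g_1\cdot p)}$ literally coincide with $(E_2)_{\mu_2(p)}$, so the equivariance equalities make sense pointwise.

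For point (i), I would first read the first formula in Remark~\ref{rmk:Phi-explicit-prop} as saying that $\Phi:\mu_1^*E_1\to\mu_2^*E_2$ is $(\Sigma_1\ltimes P)$-invariant with respect to the induced action on the source and the ``trivial-in-the-$E_2$-factor'' action on the target. Using $P/\Sigma_1\cong\BB_2$ via $\mu_2$ and the fact that $\Sigma_1$ acts trivially on $E_2$-fibers, one identifies $\mu_2^*E_2/(\Sigma_1\ltimes P)\cong E_2$ canonically, so $\Phi$ descends to a vector bundle map $\bar\Phi:\mu_1^*E_1/(\Sigma_1\ltimes P)\to E_2$ over $\BB_2$. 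Since $\Phi$ was a fiberwise isomorphism and one quotients on both sides by a free and proper action, $\bar\Phi$ inherits the isomorphism property. Finally, the second formula in Remark~\ref{rmk:Phi-explicit-prop}, combined with the descent of the right $\Sigma_2$-action $(p,\alpha)\cdot g_2=(p\cdot g_2,\alpha)$ on $\mu_1^*E_1$ to the quotient, translates directly into $\Sigma_2$-equivariance of $\bar\Phi$.

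For point (ii), the argument is largely the reverse. Everything in Definition~\ref{def:princ-Pfaff-bibundle} except the two representation-map conditions on $\Phi$ is already assumed, so only those need to be verified. Writing $\Phi$ as the pullback of $\bar\Phi$, i.e.\ $\Phi_p(p,\alpha)=(p,\bar\Phi([p,\alpha]))$, the $(\Sigma_1\ltimes P)$-equivariance reduces to the fact that $\bar\Phi$ is well-defined on the quotient by $\Sigma_1\ltimes P$, while the $(P\rtimes\Sigma_2)$-equivariance becomes literally the $\Sigma_2$-equivariance of $\bar\Phi$. Each is a one-line check once the two identifications are in place.

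I expect the main obstacle to be bookkeeping rather than anything conceptual: carefully tracking left/right conventions, the ``trivial'' versus ``induced'' representation structures on $\mu_1^*E_1$ and $\mu_2^*E_2$ in Definition~\ref{def:princ-Pfaff-bibundle}, and the base-point shifts produced by each of the two actions, while justifying cleanly the identification $\mu_2^*E_2/(\Sigma_1\ltimes P)\cong E_2$ and the descent of the $\Sigma_2$-action to the quotient $\mu_1^*E_1/(\Sigma_1\ltimes P)$. Once these identifications are made explicit, both directions of the equivalence are immediate from Remark~\ref{rmk:Phi-explicit-prop}.
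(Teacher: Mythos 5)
Your proposal is correct and follows essentially the same route as the paper: descend $\Phi$ along the identification $P/\Sigma_1\cong\BB_2$ using the $(\Sigma_1\ltimes P)$-equivariance from Remark~\ref{rmk:Phi-explicit-prop}, read off $\Sigma_2$-equivariance of $\bar\Phi$ from the $(P\rtimes\Sigma_2)$-equivariance of $\Phi$, and for the converse observe that only the two representation-map conditions of Definition~\ref{def:princ-Pfaff-bibundle} remain to be checked once $\Phi$ is written as the pullback of $\bar\Phi$. The points you flag as bookkeeping (the vector bundle structure on $\mu_1^*E_1/(\Sigma_1\ltimes P)$ over $\BB_2$ and the descent of the $\Sigma_2$-action to that quotient, which uses the commutation of the two actions) are exactly the preliminary verifications the paper carries out explicitly.
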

\begin{proof}
First of all notice that, given a bibundle $P$ between two Lie groupoids $\Sigma_1$ and $\Sigma_2$, the quotient $\mu_1^*E_1/(\G_1\ltimes P)$ is indeed a vector bundle over $\BB_2$, with projection
\[
[(p, \alpha_{\mu_1(p)})]\in \mu_1^*E_1/(\G_1\ltimes P)\mapsto \mu_2(p)
\]
which is well defined because 
\[\mu_2\circ \pr_1: \mu_1^*E_1\to \BB_2
\]
 is invariant under the action of $\Sigma_1\ltimes P$. The {\it right} representation of $P\rtimes \Sigma_2$ on $\mu_1^*E_1$ given by
 \[
P\rtimes \Sigma_2 \times_P \mu_1^*E_1 \to \mu_1^*E_1, \quad \left ((p, g_2), (p, \alpha_{\mu_1(p)})\right)\mapsto \left(p\cdot g_2, \alpha_{\mu_1(p)}\right)
\]
is equivariant with respect to the action of $\Sigma_1\ltimes P$; the same holds for the {\it left} representation obtained acting with inverses, that is
\[
P\rtimes \Sigma_2 \times_P \mu_1^*E_1 \to \mu_1^*E_1, \quad \left ((p, g_2), (p\cdot g_2, \alpha_{\mu_1(p\cdot g_2)})\right)\mapsto \left(p, \alpha_{\mu_1(p\cdot g_2)}\right).
\]
It follows that $\mu_1^*E_1/(\G_1\ltimes P)$ carries a natural left $\Sigma_2$-representation (of course, it also carries a natural right $\Sigma_2$-representation and the left one is obtained from the right one acting with inverses). This uses the fact that $P$ is a bibundle: the compatibility of the $\Sigma_1$-action with the $\Sigma_2$-action is needed.
 
That said, the first part of the claim follows from the properties of $\Phi$ in Definition~\ref{def:Pfaffain-Morita-equivalence} -- the explicit formulae from Remark~\ref{rmk:Phi-explicit-prop} are useful.

Indeed, $\Phi:\mu_1^*E_1\to \mu_2^*E_2$ is a morphism of $\Sigma_1\ltimes P$ representations. This, in particular, implies that $\Phi$ descends to a map 
\[
\bar{\Phi}: \mu_1^*E_1/(\G_1\ltimes P) \to E_2,
\]
because the orbit of the action of $\Sigma_1$ on $P$ are the $\mu$-fibres. Moreover, $\bar{\Phi}$ is an isomorphism of vector bundles: by construction, it is an injective morphism between vector bundles of the same rank. Last, since $\Phi:\mu_1^*E_1\to \mu_2^*E_2$ is a morphism of $ P\rtimes \Sigma_2$ representations, the induced map $\bar{\Phi}$ is a morphism of representations. 

As for the second part of the claim, if 
\[
\bar{\Phi}: \mu_1^*E_1/(\G_1\ltimes P) \to E_2
\]
is an isomorphism of $\Sigma_2$-representations, then there is an induced map
\[
\Phi:\mu_1^*E_1\to \mu_2^*E_2, \quad (p, \alpha_1)\mapsto (p, \bar{\Phi}( [p, \alpha_1])).
\]
%\textcolor{red}{Here $\alpha_1$ should be $\alpha_{\mu_1(p)}$, right?}
This map is a morphism of $\Sigma_1\ltimes P$-representations by definition, and a morphism of (right) $P\rtimes \Sigma_2$-representations because $\bar{\Phi}$ is a morphism of (left) $\Sigma_2$-representations. In fact, if $(p, \alpha_{\mu_1(p)})\in \mu_1^*E_1$, $g_2\in \Sigma_2$, $t(g_2)=\mu_2(p)$ then 
\[
(p, g_2)\cdot (p, \alpha_{\mu_1(p)})=(p\cdot g_2, \alpha_{\mu_1(p)})
\]
\[
\Phi(p\cdot g_2, \alpha_{\mu_1(p)})=(p\cdot g_2, \bar{\Phi}( [p\cdot g_2, \alpha_{\mu_1(p)}]));
\]
we have 
\[
[p\cdot g_2, \alpha_{\mu_1(p)}]=g_2^{-1}\cdot [p, \alpha_{\mu_1(p)}]
\]
hence 
\[
\bar{\Phi}( [p\cdot g_2, \alpha_{\mu_1(p)}])=g_2^{-1}\cdot \bar{\Phi}([p, \alpha_{\mu_1(p)}]).
\]
In the end
\[
\Phi\left((p, g_2)\cdot (p, \alpha_{\mu_1(p)})\right)=\left(p\cdot g_2, g_2^{-1}\cdot \bar{\Phi}([p, \alpha_{\mu_1(p)}])\right)
\]
and the left hand side is exactly the formula for the right representation of $P\rtimes \Sigma_2$ on $\mu_2^*E_2$.
\end{proof}

We arrive finally to the Pfaffian version of Theorem \ref{thm:gauge_eq_morita}.

\begin{theorem}\label{prp:PME_is_gauge_construction}
Let $(\Sigma_1, \omega_1, E_1)$ be a Pfaffian groupoid over $\BB_1$ and $(\Sigma_2, \omega_2, E_2)$ be a Pfaffian groupoid over $\BB_2$. They are Pfaffian Morita equivalent if and only if there exists a principal $(\Sigma_1, \omega_1, E_1)$-bundle $(P,\theta)$ such that 
\[
(\Gaug(P),(\omega_1)_{\rm gauge},(E_1)_{\rm gauge}) \cong (\Sigma_2,\omega_2,E_2)
\]
where the left hand side is the gauge Pfaffian groupoid from Proposition \ref{prop:Pfaffian_Gauge_construction}.
\end{theorem}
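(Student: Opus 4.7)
The plan is to prove both directions, treating the backward direction as an immediate consequence of earlier results and concentrating the work on the forward direction, which reduces to checking compatibility of a classical Morita isomorphism with the Pfaffian data.

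For the ``if'' direction, assume that a principal $(\Sigma_1,\omega_1,E_1)$-bundle $(P,\theta)$ exists with $(\Gaug(P),(\omega_1)_{\rm gauge},(E_1)_{\rm gauge}) \cong (\Sigma_2,\omega_2,E_2)$. By Example~\ref{exm:gauge_is_PME}, the Pfaffian gauge construction of Proposition~\ref{prop:Pfaffian_Gauge_construction} automatically exhibits $(\Sigma_1,\omega_1,E_1)$ and $(\Gaug(P),(\omega_1)_{\rm gauge},(E_1)_{\rm gauge})$ as Pfaffian Morita equivalent. Lemma~\ref{lemma:morphism_which_are_PME} converts the assumed Pfaffian isomorphism into a Pfaffian Morita equivalence between $(\Gaug(P),(\omega_1)_{\rm gauge},(E_1)_{\rm gauge})$ and $(\Sigma_2,\omega_2,E_2)$, and Proposition~\ref{prp:PME_is_eq_rel} (transitivity of PME) concludes.

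For the ``only if'' direction, start with a principal Pfaffian bibundle $\BB_1 \overset{\mu_1}{\leftarrow} (P,\theta,\Phi) \overset{\mu_2}{\to} \BB_2$, so that $(P,\theta)$ is in particular a (left) principal $(\Sigma_1,\omega_1,E_1)$-bundle over $M := \BB_2$. Applying Proposition~\ref{prop:Pfaffian_Gauge_construction} produces the Pfaffian gauge groupoid $(\Gaug(P),(\omega_1)_{\rm gauge},(E_1)_{\rm gauge})$ over $\BB_2$, and the goal is to construct a Pfaffian isomorphism from this to $(\Sigma_2,\omega_2,E_2)$. I would assemble such an isomorphism out of three pieces: first, the classical gauge theorem (Theorem~\ref{thm:gauge_eq_morita}) provides a Lie groupoid isomorphism $\Psi : \Gaug(P) \to \Sigma_2$, given explicitly by $[p_1,p_2] \mapsto g$ where $g \in \Sigma_2$ is the unique element with $p_1 \cdot g = p_2$; second, Lemma~\ref{prp:Phi_descends} yields an isomorphism of $\Sigma_2$-representations $\bar\Phi : (E_1)_{\rm gauge} \to E_2$, since by construction $(E_1)_{\rm gauge}$ coincides with $\mu_1^*E_1/(\Sigma_1 \ltimes P)$; third, and this is the nontrivial part, one must verify $\Psi^*\omega_2 = \bar\Phi \circ (\omega_1)_{\rm gauge}$.

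The main obstacle lies in this last compatibility of forms. The gauge form $(\omega_1)_{\rm gauge}$ is characterised by the property that $(P,\theta)$ be a right principal $(\Gaug(P),(\omega_1)_{\rm gauge},(E_1)_{\rm gauge})$-bundle, while the bibundle hypothesis supplies $(P,\Phi \circ \theta)$ as a right principal $(\Sigma_2,\omega_2,E_2)$-bundle. Since $\Psi$ intertwines the two right actions on $P$ and $\bar\Phi$ identifies the coefficient spaces, uniqueness of the Pfaffian structure satisfying the right multiplicativity equation (in the form of Remark~\ref{remark:mullt-from-right}) will force the desired equality $\Psi^*\omega_2 = \bar\Phi \circ (\omega_1)_{\rm gauge}$. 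Concretely, I would write both multiplicativity equations at pairs $(p,g) \in P \rtimes \Sigma_2$, translate one side by $\Psi$ and $\bar\Phi$, and use the commutation of the left $\Sigma_1$-action with the right $\Sigma_2$-action on $P$ to match them. The only real bookkeeping concerns the left action of $g^{-1}$ on the various coefficient bundles appearing in the right-multiplicativity formula, but the representation-theoretic properties of $\Phi$ spelled out in Remark~\ref{rmk:Phi-explicit-prop} are designed precisely to absorb these contributions.
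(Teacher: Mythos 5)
Your proposal is correct and follows essentially the same route as the paper: the backward direction is the identical combination of the gauge construction, Lemma~\ref{lemma:morphism_which_are_PME} and transitivity from Proposition~\ref{prp:PME_is_eq_rel}, while the forward direction assembles the isomorphism from the classical gauge identification $\Gaug(P)\cong\Sigma_2$, the coefficient isomorphism of Lemma~\ref{prp:Phi_descends}, and a comparison of the two right-multiplicativity equations on $P\rtimes\Sigma_2$. Your ``uniqueness'' phrasing is a legitimate repackaging of the paper's explicit pullback computation $(\Phi\circ\Psi)^*(\pr_2^*\theta-\pr_1^*\theta)=\pr_2^*\omega_2$, since the surjective submersivity of $\mu_2$ guarantees that the multiplicativity equation determines the form on the structure groupoid from $\theta$ alone.
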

\begin{proof}
On the one hand, if the isomorphism in the statement holds, then $(\Sigma_2,\omega_2,E_2)$ is isomorphic to a Pfaffian groupoid which is Pfaffian Morita equivalent to $(\Sigma_1,\omega_1,E_1)$ by construction (see~\cite[Proposition 5.4.3]{FRANCESCO}). Lemma~\ref{lemma:morphism_which_are_PME} and Proposition~\ref{prp:PME_is_eq_rel} imply that $(\Sigma_1,\omega_1,E_1)$ and $(\Sigma_2,\omega_2,E_2)$ are Pfaffian Morita equivalent.

Let us assume that $(\Sigma_1,\omega_1,E_1)$ and $(\Sigma_2,\omega_2,E_2)$ are Pfaffian Morita equivalent, and let us denote $(P,\theta,\Phi)$ a principal Pfaffian bibundle between them. In particular, $P$ is a principal Pfaffian bundle between $\Sigma_1$ and $\Sigma_2$. It follows that $\Gaug(P)\cong \Sigma_2$ (which is the non-Pfaffian incarnation of the present statement). The isomorphism is constructed using the isomorphism of Lie groupoids
\[
\Psi: P\rtimes \Sigma_2 \to P\times_{\mu_1} P,\quad (p,g) \to (p,q):\ q=p\cdot g.
\]
The map $\Psi$ is invariant under the left action of $\Sigma_1\ltimes P$ on $P\rtimes \Sigma_2$ induced by the action of $\Sigma_1$ on $P$ and the left diagonal action of $\Sigma\ltimes P$ on $P\times_{\mu_1}P$. Consequently, it descends to an isomorphism $\Psi_P:\Sigma_2\to \Gaug(P)$. Furthermore, the vector bundle $(E_1)_{\rm gauge}$ of $\Gaug(P)$ (cf.\ Proposition~\ref{prop:Pfaffian_Gauge_construction}) is the bundle $\mu_1^*E_1/(\Sigma_1\ltimes P)$. The representation on it descends to the quotient from the equivariant trivial representation
\[
(P\times_{\mu_1} P) \times_P \mu_1^*E_1 \to \mu_1^*E_1, \quad \left ((p, q), (q, \alpha_{\mu_1(q)})\right)\mapsto \left(p, \alpha_{\mu_1(q)}\right).
\]
As Proposition~\ref{prp:Phi_descends} and its proof clarify, there is an isomorphism of representations $\Psi_{\Phi}: E_2 \to {(E_1)}_{\rm gauge}$. What remains to show is that 
\[
\Psi_\Phi\circ \omega_2 = \Phi_P^*((\omega_1)_{\rm gauge}).
\]
The form $\omega_2$ can be thought of as obtained from the form $\pr_1^*\omega_2$ on $P\rtimes \Sigma_2$, which is basic with respect to the action of $\Sigma_1\ltimes P$ fixing the second factor. On the other hand $(\omega_1)_{\rm gauge}$ is obtained (see~\cite[Proposition 5.4.3]{FRANCESCO}) from the form $\pr_2^*\theta-\pr_1^*\theta$ on $P\times_{\mu_1}P$, which is basic with respect to the diagonal action of $\Sigma_1\ltimes P$. If we take into account that $\Psi_P$ lifts to $\Psi:P\rtimes \Sigma_2 \to P\times_{\mu_1}P$ and $\Psi_\Phi$ lifts to $\Phi^{-1}:\mu_2^*E_2\to \mu_1^*E_1$, we see that it is enough to show that
\[
(\Phi\circ\Psi)^*(\pr_2^*\theta-\pr_1^*\theta) = \pr_2^*\omega_2.
\]
The statement follows recalling the multiplicativity equation for the form $\Phi\circ\theta$, see Remark~\ref{remark:mullt-from-right}, and the fact that the representation of $P\times_{\mu_1}P$ on $\mu_1^*E_1$ is trivial.
\end{proof}

\begin{remark}[A more categorical approach]
Using the alternative characterisation of Morita equivalence via Morita maps (Remark \ref{Morita_maps}), it is possible to prove that two Pfaffian groupoids $(\Sigma_1, \omega_1, E_1)$ and $(\Sigma_2, \omega_2, E_2)$ are Pfaffian Morita equivalent if and only if there exists a third Pfaffian groupoid $(\Sigma, \omega_, E)$ together with two Pfaffian Morita maps $(\Sigma, \omega, E) \to (\Sigma_1, \omega_1, E_1)$ and $(\Sigma, \omega, E) \to (\Sigma_1, \omega_1, E_1)$. Here by {\bf Pfaffian Morita map} we simply mean a Morita map which is also a Pfaffian groupoid morphism in the sense of Definition \ref{def:morphism_Pfaffian_groupoids}.
 
 The equivalence of this notion with Definition \ref{def:Pfaffain-Morita-equivalence} builds on the standard \cite[Theorem 4.3.6]{MATIAS} and is just a matter of unraveling the definitions. It could be also interesting to exploit the alternative characterisation of Morita maps provided in \cite[Theorem 4.3.1]{MATIAS} to obtain a more geometric approach of Pfaffian Morita equivalence.
\end{remark}

\subsection{Some properties of Pfaffian Morita equivalences}
It is well known that various geometric objects associated to a Lie groupoid are preserved under classical Morita equivalence; see Proposition~\ref{prp:Transverse-geometry}. A {\it Pfaffian} Morita equivalence of Pfaffian groupoids between $(\Sigma_1, \omega_1, E_1)$ and $(\Sigma_2, \omega_2, E_2)$ is in particular a Morita equivalence between $\Sigma_1$ and $\Sigma_2$ -- hence, following Proposition~\ref{prp:Transverse-geometry} orbit spaces, isotropy groups and normal representations are preserved. However, there is additional {\it Pfaffian} structure preserved under Pfaffian Morita equivalence.
\begin{proposition}\label{prp:Pfaffian-transverse-geometry}
Let $(\Sigma_1, \omega_1, E_1)$ be a Pfaffian groupoid over $\BB_1$ and $(\Sigma_2, \omega_2, E_2)$ be a Pfaffian groupoid over $\BB_2$. Assume that they are Pfaffian Morita equivalent and let
\[
\BB_1\overset{\mu_1}{\lto} (P, \theta, \Phi)\overset{\mu_2}{\to} \BB_2
\] 
be the triple realising the equivalence. % -- hence $\theta$ is pointwise surjective. 
Then, if $x\in \BB_1$, $y\in \BB_2$ are points whose orbits are related by $P$ (in the sense of Proposition~\ref{prp:Transverse-geometry}), the following facts hold true.
\begin{enumerate}
\item The isomorphism from Proposition~\ref{prp:Transverse-geometry}
\[
\varphi_p: (\Sigma_1)_x\to (\Sigma_2)_y
\]
induced by the point $p \in P$ sends 
\[
\g_x(\omega_1)=\ker_x(\omega_1)\cap \ker_x(ds)\subset \ker_x(dt)\cap\ker_x(ds)=T_x(\Sigma_1)_x
\]
to 
\[
\g_y(\omega_2)=\ker_y(\omega_2)\cap \ker_y(ds)\subset \ker_y(dt)\cap\ker_y(ds)=T_y(\Sigma_2)_y.
\]
\item The representations of $(\Sigma_1)_x$ on $(E_1)_x$ and of $(\Sigma_2)_y$ on $(E_2)_y$ are isomorphic.
\item The isomorphism $\varphi_p$ between $\Sigma_x$ and $\Sigma_y$ is an isomorphism of Pfaffian groups.
%\item (consequently, so are the representations on $T_x\BB_1$ and $T_y\BB_2$ respectively).
\end{enumerate}
\end{proposition}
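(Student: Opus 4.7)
My plan is to derive all three statements from a single infinitesimal identity at a point $p \in P$ with $\mu_1(p)=x$ and $\mu_2(p)=y$ (such a $p$ exists precisely because the orbits of $x$ and $y$ are related by $P$). I will first recall that the Morita isomorphism $\varphi_p\colon (\Sigma_1)_x \to (\Sigma_2)_y$ is characterised by $g_1 \cdot p = p \cdot \varphi_p(g_1)$. Differentiating this defining equation at the unit along a curve in $(\Sigma_1)_x$ with tangent $V \in T_x(\Sigma_1)_x$, I get that the left infinitesimal action $V^\dagger_p$ of $V$ on $p$ coincides, as a vector in $T_p P$, with the right infinitesimal action $W^\dagger_p$ of $W:=d\varphi_p(V)\in T_y(\Sigma_2)_y$.

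Next I will extract the key identity by combining the two multiplicativity equations. Evaluating the left-multiplicativity of $\theta$ at the pair $(V,0_p)\in T_{(1_x,p)}(\Sigma_1\times_{\BB_1} P)$ yields $\theta_p(V^\dagger_p)=\omega_1(V)$, while evaluating the right-multiplicativity of $\Phi\circ\theta$ (Remark~\ref{remark:mullt-from-right}) at the pair $(0_p,W)\in T_{(p,1_y)}(P\times_{\BB_2}\Sigma_2)$ yields $(\Phi\circ\theta)_p(W^\dagger_p)=\omega_2(W)$. Using $V^\dagger_p=W^\dagger_p$, and writing $\Phi_p\colon (E_1)_x\to (E_2)_y$ for the restriction of $\Phi$ to the fibre at $p$, these combine into
\[
\omega_2(d\varphi_p(V))=\Phi_p(\omega_1(V)),\qquad V\in T_x(\Sigma_1)_x.
\]

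From this single equation, statement (1) is immediate: $V$ lies in $\g_x(\omega_1)$ exactly when $\omega_1(V)=0$, and since $\Phi_p$ is a linear isomorphism this happens exactly when $\omega_2(d\varphi_p(V))=0$, i.e.\ when $d\varphi_p(V)\in\g_y(\omega_2)$. For statement (2), I will use the $\G_1\ltimes P$- and $P\rtimes\G_2$-equivariance of $\Phi$ (Remark~\ref{rmk:Phi-explicit-prop}) applied to $g_1\cdot p=p\cdot\varphi_p(g_1)$; a short manipulation then gives $\Phi_p(g_1\cdot\alpha)=\varphi_p(g_1)\cdot\Phi_p(\alpha)$, so $\Phi_p$ is a representation isomorphism intertwined by $\varphi_p$. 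Finally, for statement (3), I will invoke the characterisation of Pfaffian groups via the representation map $l_i\colon\g_{i,z}\to (E_i)_z$ given by $(\omega_i)_z$ (Proposition~\ref{prp:char-pf-groups} together with Corollary~\ref{cor:Pfaffian-groups-MC}): the displayed identity reads exactly $\Phi_p\circ l_1=l_2\circ d\varphi_p$, which combined with (2) is the definition of a Pfaffian group isomorphism.

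I expect the only point requiring care to be the $g_2^{-1}$ twist in the right-action multiplicativity (Remark~\ref{remark:mullt-from-right}) and the corresponding twist in the right-equivariance of $\Phi$; evaluating at the unit $(p,1_y)$ trivialises the twist in step two, but in the equivariance check of step three one must carry the inverse factors carefully. Beyond this bookkeeping, the argument reduces entirely to a pointwise comparison at the unit via the two multiplicativity axioms and the defining relation $g_1\cdot p=p\cdot\varphi_p(g_1)$.
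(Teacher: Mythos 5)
Your proof is correct, and for points (1) and (3) it takes a genuinely different route from the paper's. The paper handles (1) by noting that $\g(\omega_i)$ restricted to the isotropy group is a bi-invariant foliation and that the Lie group isomorphism $\varphi_p$ carries invariant foliations to invariant foliations, and it dispatches (3) with ``follows by definition from the previous two points''. You instead extract the single pointwise identity $\omega_2(d\varphi_p(V))=\Phi_p(\omega_1(V))$ for $V\in T_x(\Sigma_1)_x$, obtained by differentiating $g_1\cdot p=p\cdot\varphi_p(g_1)$ at the unit and feeding the resulting equality of infinitesimal actions into the two multiplicativity equations (the left one at $(V,0_p)$, the right one at $(0_p,W)$, where, as you correctly flag, the $g_2^{-1}$ twist of Remark~\ref{remark:mullt-from-right} evaluates trivially at the unit). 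This identity gives (1) at once because $\Phi_p$ is a linear isomorphism, and, read as $\Phi_p\circ l_1=l_2\circ d_{1_x}\varphi_p$ via Proposition~\ref{prp:char-pf-groups} and Corollary~\ref{cor:Pfaffian-groups-MC}, it is exactly the compatibility of the Pfaffian forms required for (3) --- something that matching symbol spaces (1) plus matching representations (2) do not by themselves force, so your argument is in fact more self-contained than the paper's at this step; it also pins down concretely \emph{which} invariant foliation is sent to which, a point the paper's invariance argument leaves implicit. Your treatment of (2) via the equivariance formulae of Remark~\ref{rmk:Phi-explicit-prop} coincides with the paper's. The price is a slightly longer pointwise computation; the gain is that all three statements fall out of one displayed equation.
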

%\begin{remark}
%We use fullness requests on $(\Sigma_2, \omega_2, E_2)$ and on $(P, \theta, \Phi)$ to prove the second point; their role is made quite transparent in the argument below. 
%\end{remark}
\begin{proof}
Let us proceed point by point.
\begin{enumerate}
\item Recall that 
\[
\varphi_p: (\Sigma_1)_x\to (\Sigma_2)_y, \quad g_1\mapsto g_2:=\varphi_p(g_1),
\]
where $g_2\in (\Sigma_2)_x$ is the only element such that $p\cdot g_2=g_1\cdot p$. 
Recall also that the symbol bundle $\ker(ds)\cap \ker(\omega_1)$ is involutive and is contained in $\ker(ds)\cap \ker(dt)$ (Definition~\ref{def_Pfaffian_groupoid-form}). Therefore, it defines a foliation on $(\Sigma_1)_x$. This foliation is both left and right invariant by Remark~\ref{rmk:Pf-form-inv-equiv}/Propositions~\ref{prp:char-pf-groups} and~\ref{prp:isotropies-pf-groups} (or simply by Definition~\ref{def_Pfaffian_groupoid-distribution}). The same considerations hold for $(\Sigma_2)_y$.

Consider the tangent map at $x$ of $\varphi_p$
\[
d_x\varphi_p:T_x (\Sigma_1)_x\to T_y(\Sigma_2)_y;
\] to prove that $d_x\varphi_p$ sends $\g_x(\omega_1)$ to $\g_x(\omega_2)$, it is enough to show that $\varphi_p$ sends leaves to leaves. But this is clear, because $\varphi_p$ is an isomorphism of Lie groups -- hence it sends invariant foliations to invariant foliations.

%Consequently, if $v_x\in \g_x(\omega_1)$, then there is a curve 
%\[
%\gamma_1:[-1, 1]\to (\Sigma_1)_x
%\]
%such that $v_x$ is tangent to $\gamma_1$ at $t=0$ and. the image of $\gamma_1$ lies in the leaf of $\ker(ds)\cap \ker(\omega)$ through $x\in \Sigma_1$.

\item The isomorphism $\Phi: \mu_1^*E_1\to \mu^*_2E_2$ induces an isomorphism of vector spaces $\Phi_p:(E_1)_x\to (E_2)_y$. The properties of $\Phi$ -- see Definition~\ref{def:Pfaffain-Morita-equivalence} and Remark~\ref{rmk:Phi-explicit-prop} -- imply immediately that the pair $(\Phi_p, \varphi_p)$ is an isomorphism of representations.
\begin{comment}
Since $\theta$ is pointwise surjective, the most general element of $(E_1)_x$ has the form $\theta(v_p)$ for $v_p\in T_pP$. We observe that $v_p$ can be taken to be $\mu_1$-vertical; this follows from multiplicativity of the action of $\Sigma_2$ together with the fact that $\ker(\omega_2)$ is transverse to $\ker(dt)$ and $\omega_2$ is full. 
Since the left action $a_1$ of $\Sigma_1$ on $P$ is multiplicative, we see
\[
g_1\cdot \theta_{p}(v_p)=\theta_{g_1\cdot p}(da_1(0_{g_1}, v_p)), \quad g_1\in (\Sigma_1)_x,\  v_p\in T_pP;
\]
where we recall that by $g_1\cdot p$ we mean $a_1(g_1, p)$.
Consider now the isomorphism $\varphi_p$ and set $g_2:=\varphi_p(g_1)$ as above. We have
\[
da_1(0_{g_1}, v_p)=da_2(v_p, w_{g_2});
\]
for some $w_{g_2}$ tangent to $(\Sigma_2)_x$ at $g_2$.
We now evaluate the isomorphism $\Phi:\mu_1^*E_1\to \mu_2E_2$ on
\[
\theta_{g_1\cdot p}(da_2(v_p, w_{g_2})).
\]
Using multiplicativity of the right action $a_2$, see Remark~\ref{remark:mullt-from-right}  we see that
\[
\Phi\circ \theta_{p\cdot g_2} (da_2(v_p, w_{g_2}))=g_2^{-1}\cdot (\Phi\circ \theta)_p(v_p)+g_2^{-1}\cdot (\omega_2)_p(w_{g_2}).
\]
We can choose a curve $\gamma:]-1,1[\to TP$ through $p$ which lies in $\ker(d\mu)_2$ and is tangent to $v_p$ for some time $t=0$. The vector $w_{g_2}$ is then tangent to the curve 
\[
\gamma'(t)=\varphi_{\gamma(t)}(g_1)
\]

 $w_{g_2}\in \ker(\omega_2)$ by the previous point. In fact we can choose a curve $\gamma:I\to TP$ through $p$ 
%, $0_{g_1}\in \ker(\omega_1)$ and $w_{g_2}$ is the image 
\end{comment}

\item It follows by definition from the previous two points. \qedhere
\end{enumerate}
\end{proof}

%%%% To fill in

%\begin{proposition}
%Let $(\Sigma_1, \omega_1, E_1)$ be a Pfaffian groupoid over $\BB_1$ and $(\Sigma_2, \omega_2, E_2)$ be a Pfaffian groupoid over $\BB_2$. Assume $(\Sigma_1, \omega_1, E_1)$ and $(\Sigma_2, \omega_2, E_2)$ to be Pfaffian Morita equivalent and let
%\[
%\BB_1\overset{\mu_1}{\lto} (P, \theta, \Phi)\overset{\mu_2}{\to} \BB_2
%\] 
%be the triple realising the equivalence. Then $\Phi$ induces
%\begin{itemize}
%\item an isomorphism of $\Sigma_2$-representations $E_2\cong \mu_1^*E_1/\G_1\ltimes P$ (in fact, this imply the second point from proposition~\ref{prp:Pfaffian-transverse-geometry});
%\item an isomorphism of $\Sigma_2$-representations $T\BB_2\cong \mu_1^*T\BB_1/\G_1\ltimes P$
%\end{itemize}
%\end{proposition}
%\begin{proof}
%A
%\end{proof}

%%%%% 

At this point we have
\begin{corollary}\label{lemma:Pfaffian_ME_of_groups}
Two Pfaffian groups $(G_1, \omega_1, V_1)$, $(G_2, \omega_2, V_2)$ are Pfaffian Morita equivalent if and only if they are Pfaffian isomorphic.
%\begin{itemize}
%\item[i)] If they are full, they are Pfaffian Morita equivalent if and only if there is an isomorphism $\phi: G_1\to G_2$ sending $\mathfrak{g}_e(\omega_1)=\ker_e(\omega_1)$ to $\mathfrak{g}(_e\omega_2)=\ker_e(\omega_2)$.
%\item[ii)] In general, they are Pfaffian Morita equivalent if and only if 
%\begin{itemize}
%\item there is an isomorphism $\phi: G_1\to G_2$ whose tangent map sends $\mathfrak{g}(\omega_1)$ to $\mathfrak{g}(\omega_2)$;
%\item the induced map $[d_e\phi]: \mathfrak{g}_1/\g_e(\omega_1)\to \mathfrak{g}_2/\g_e(\omega_2)$ extends to an isomorphism of representations $\Phi: V_1\to V_2$.
%\end{itemize}
%\end{itemize}
\end{corollary}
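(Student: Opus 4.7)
The statement is an if-and-only-if, so I would split it into two directions. The direction ``Pfaffian isomorphic $\Longrightarrow$ Pfaffian Morita equivalent'' is immediate from Lemma~\ref{lemma:morphism_which_are_PME}, which canonically constructs a principal Pfaffian bibundle out of a Pfaffian isomorphism. So the content of the corollary is entirely in the converse.

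For the nontrivial direction, assume there is a principal Pfaffian bibundle
\[
\BB_1 \overset{\mu_1}{\longleftarrow} (P,\theta,\Phi) \overset{\mu_2}{\longrightarrow} \BB_2
\]
between $(G_1,\omega_1,V_1)$ and $(G_2,\omega_2,V_2)$. Because each $G_i$ is a Pfaffian group, the base $\BB_i$ is a point, say $\BB_1 = \{x\}$ and $\BB_2 = \{y\}$. Moreover both $\mu_1$ and $\mu_2$ are trivial maps onto these points, so trivially the unique orbits of $G_1$ in $\BB_1$ and of $G_2$ in $\BB_2$ are related by $P$ in the sense of Proposition~\ref{prp:Transverse-geometry}. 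Pick any $p \in P$; then Proposition~\ref{prp:Pfaffian-transverse-geometry} applies, and its third conclusion directly asserts that the induced isomorphism of isotropy groups
\[
\varphi_p : (\G_1)_x = G_1 \longrightarrow G_2 = (\G_2)_y
\]
together with $\Phi_p: V_1 \to V_2$ is an isomorphism of Pfaffian groups.

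Therefore the pair $(\varphi_p,\Phi_p)$ realises $(G_1,\omega_1,V_1) \cong (G_2,\omega_2,V_2)$ as Pfaffian groups, which is exactly what we need. I do not anticipate any technical obstacle: once Proposition~\ref{prp:Pfaffian-transverse-geometry} is available, the corollary reduces to the observation that for Pfaffian \emph{groups} there is nothing beyond the isotropy to compare, so ``isotropy-wise isomorphic'' and ``isomorphic'' coincide. If one wanted to make the argument self-contained without invoking point (iii) of Proposition~\ref{prp:Pfaffian-transverse-geometry} as a black box, the only computation to redo is checking that $\varphi_p^* \omega_2 = \Phi_p \circ \omega_1$, which follows from the multiplicativity equation for $\Phi \circ \theta$ (cf.~Remark~\ref{remark:mullt-from-right}) restricted to vectors tangent to the $G_1$- and $G_2$-orbits through $p$, combined with right-invariance of $\omega_1$ and $\omega_2$ from Proposition~\ref{prp:char-pf-groups}.
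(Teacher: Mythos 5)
Your proposal is correct and follows exactly the paper's own argument: Lemma~\ref{lemma:morphism_which_are_PME} for the easy direction and point (iii) of Proposition~\ref{prp:Pfaffian-transverse-geometry} applied to the (unique) isotropy groups for the converse. The extra remark on how to verify $\varphi_p^*\omega_2 = \Phi_p\circ\omega_1$ directly via multiplicativity is a fine supplement but not needed beyond what the cited proposition already provides.
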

\begin{proof} On the one hand we have simply Lemma~\ref{lemma:morphism_which_are_PME}: a Pfaffian isomorphism naturally induce a Pfaffian Morita equivalence. On the other hand, the third point in Proposition~\ref{prp:Pfaffian-transverse-geometry} implies, in particular, that Pfaffian Morita equivalent Pfaffian groups are Pfaffian isomorphic.
% We work on the other side.
%\begin{itemize}
%\item[i)] If two Pfaffian groups are Pfaffian Morita equivalent, they are in particular Morita equivalent. In particular, there is an isomorphism $\phi_1: G_1\to P$ ($(P, \theta)$ being the bibundle) sending $\ker(\omega_1)$ to $\ker(\theta)$ because of multiplicativity of the action. A similar isomorphism $\phi_2: P\to G_2$ is defined, and the composition $\phi_2\circ \phi$ is an isomorphism of groups sending $\g_e(\omega_1)$ to $\g_e(\omega_2)$.
%\item[ii)] Let $(G_1, \omega_1, V_1)$ and $(G_2, \omega_2, V_2)$ be non full Pfaffian Morita equivalent groups. Since, in particular, $G_1$ and $G_2$ are isomorphic, we can identify the bibundle with $G_1$, see the previous point. Then, we have an isomorphism 
%$\Phi: G_1\times V_1\to G_2\times V_2$; in particular, restricting to $\{e\}\times V_1$ where $e\in G_1$ is the identity, we get an isomorphism of vector spaces $\bar{\Phi}:V_1\to V_2$.
%
%$\bar{\Phi}$ is an extension of $[d_e\phi]: \g_1/ \g_e(\omega_1)\to \g_2/ \g_e(\omega_2)$.  This follows from the multiplicativity equation, which implies that $\Phi\circ \omega_1=\omega_2$, that is, $\Phi=[d_e\phi]$.
%\end{itemize}
\end{proof}

\subsection{Interaction with principal bundles}

A classical fact in Lie groupoid theory is that Morita equivalent Lie groupoids have equivalent categories of principal bundles, see Proposition~\ref{prop:princ_ME}; 
we aim here at a version of that result in the Pfaffian setting. First of all let us notice that the construction of the equivalence in Proposition~\ref{prop:princ_ME} is not so mysterious. Let $P$ be a ($\G_1, \G_2$)-bibundle, with moment maps $\mu_1:\G_1\to \BB_1$ and $\mu_2:\G_2\tto \BB_2$. In order to construct a principal $\G_2$-bundle $P_2$ out of a principal $\G_1$-bundle $P_1$, one considers the pullback $\mu_1^*P_1\cong P_1\times_{\BB_1} P$ carrying the principal diagonal action of $\G_1\ltimes P$ and the action 
\[
((p, g_2), (p_1, p))\in (P\rtimes \Sigma_2)\times (P_1\times_{\BB_1} P)\mapsto (p_1, p\cdot g_2)
\]
of $P\rtimes \G_2$ on the second factor. The quotient by $\G_1\ltimes P$ is then a $\G_2$-space and, in fact, a principal $\G_2$-bundle $P_2$; the action is from the right, but can of course be turned into a left action by precomposing with the inversion. 
%\subsubsection{Back to pseudogroups: symmetries}

\begin{theorem}\label{prop:Pfaffian_principal_category}
Let $(\G_1, \omega_1, E_1)$ and $(\G_2, \omega_2, E_2)$ be Pfaffian groupoids. If they are Pfaffian Morita equivalent, there is a
%n equivalence of categories
one to one correspondence
\[ 
 \left\{   \begin{array}{c}
            \text{Principal } (\G_1, \omega_1, E_1) \text{-bundles}\\
             \text{up to isomorphisms}
            \end{array} 
\right\} 
\stackrel{1-1}{\longleftrightarrow}
\left\{   \begin{array}{c}
           \text{Principal } (\G_2, \omega_2, E_2) \text{-bundles}\\
           \text{up to isomorphism}
           \end{array} 
\right\}.
\]\
%\[
%\{\text{Principal } (\G_1, \omega_1, E_1) \text{-bundles} up to isomorphisms\} \overset{}{\longleftrightarrow} \{\text{Principal } (\G_2, \omega_2, E_2) \text{-bundles} up to isomorphisms\}
%\]
\end{theorem}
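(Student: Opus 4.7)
The plan is to construct explicitly a functor $F_P$ realising the equivalence, using transport along the bibundle, and to observe that its quasi-inverse is the analogous construction along the ``opposite'' bibundle. The classical construction recalled just before the statement associates to a principal $\G_1$-bundle $(P_1, \pi_1, \mu_1^{P_1})$ the principal $\G_2$-bundle $P_2 := (P_1 \times_{\BB_1} P)/\G_1$, where $\G_1$ acts diagonally and $\G_2$ acts via its right action on the second factor. The task is to enrich this construction with Pfaffian data and to check the relevant compatibilities.

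The candidate for the Pfaffian form on $P_2$ is obtained by descent from the form
\[
\vartheta := \Phi \circ \bigl(\pr_2^*\theta - \pr_1^*\theta_1\bigr)
\]
on $P_1 \times_{\BB_1} P$, valued in the pullback of $E_2$ via $\mu_2^P \circ \pr_2$, mirroring the form used in the proof of Theorem~\ref{prp:PME_is_gauge_construction}. Two facts are needed to see $\vartheta$ descends. First, $\vartheta$ vanishes on vectors tangent to the diagonal $\G_1$-orbit: for $\alpha \in A_1$, Remark~\ref{rmk:inf_action_Pfaffian} applied to both $(P_1,\theta_1)$ and $(P,\theta)$ gives $\theta_1(\alpha^\dagger_{p_1}) = \omega_1(\alpha) = \theta(\alpha^\dagger_p)$, so the two contributions cancel. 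Second, $\vartheta$ is $\G_1$-invariant: the multiplicativity of the two left $\G_1$-actions yields, under $g_1$, the expression $g_1 \cdot (\theta(V_p) - \theta_1(V_{p_1}))$, and the $\Sigma_1 \ltimes P$-equivariance of $\Phi$ (Remark~\ref{rmk:Phi-explicit-prop}) precisely absorbs the $g_1$-action on the $E_2$-valued side, yielding $\Phi_p(\theta(V_p) - \theta_1(V_{p_1}))$. Hence $\vartheta$ descends to a well-defined $\mu_2^{P_2*}E_2$-valued form $\theta_2$ on $P_2$.

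Next I would verify that $(P_2, \theta_2)$ is a principal Pfaffian $(\G_2, \omega_2, E_2)$-bundle; multiplicativity with respect to the right $\G_2$-action is the key check. Lifting to $P_1 \times_{\BB_1} P$ and using that $\pr_1^*\theta_1$ is invariant under the right $\G_2$-action (which moves only the second factor), this reduces to the multiplicativity equation for $\Phi \circ \theta$ recalled in Remark~\ref{remark:mullt-from-right}. The kernel compatibility $\ker(d\mu^{P_2}) \cap \ker(\theta_2) = \ker(d\pi^{P_2}) \cap \ker(\theta_2)$ lifts analogously to the corresponding identities for $(P_1,\theta_1)$ and $(P, \Phi\circ\theta)$. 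Functoriality in morphisms is then immediate: a morphism of principal Pfaffian $(\G_1,\omega_1,E_1)$-bundles $P_1\to P_1'$ induces a $\G_1$-equivariant map of fibre products preserving the candidate form $\vartheta$, whence preserving $\theta_2$ on the quotients.

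For the inverse functor I would use the opposite bibundle $(P^{\mathrm{op}}, \Phi \circ \theta, \Phi^{-1})$ between $(\G_2, \omega_2, E_2)$ and $(\G_1, \omega_1, E_1)$ and apply the same construction. The natural isomorphisms $F_{P^{\mathrm{op}}} \circ F_P \cong \mathrm{id}$ and $F_P \circ F_{P^{\mathrm{op}}} \cong \mathrm{id}$ rest on the standard identification in the classical bibundle calculus; that the Pfaffian forms match is a direct computation reversing the descent above. The main obstacle is the bookkeeping in the descent step: the forms $\theta_1$ and $\theta$ take values in fibres of $E_1$ that move under the $\G_1$-action, so both invariance and descent must be interpreted through the representation structures, and one must check that the multiplicativity equations of $\theta_1$, $\theta$, and $\Phi\circ\theta$, together with the $\Sigma_1\ltimes P$- and $P\rtimes\Sigma_2$-equivariance of $\Phi$, mesh exactly in the quotient. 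Once this verification is in place, everything else is formal.
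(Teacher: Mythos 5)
Your proposal is correct and follows essentially the same route as the paper: the same form $\pr_2^*\theta-\pr_1^*\theta_1$ (post-composed with $\Phi$) on $P_1\times_{\BB_1}P$, the same descent argument via horizontality (cancellation of the infinitesimal actions, Remark~\ref{rmk:inf_action_Pfaffian}) and $\G_1\ltimes P$-equivariance absorbed by $\Phi$, the same reduction of multiplicativity of $\theta_2$ to Remark~\ref{remark:mullt-from-right}, and the same kernel bookkeeping. The only cosmetic difference is that you phrase the quasi-inverse via the opposite bibundle where the paper says ``performing the construction backwards''; these are the same construction.
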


This theorem could be of course restated as an equivalence of appropriate categories; with the same ideas of Remark \ref{rm:stacks}, this would lead to a suitable notion of {\it Pfaffian stack}.

While the stacky point of view to Lie groupoids has many advantages, and therefore could be further explored when bringing Pfaffian structures into the picture, throughout this paper we deliberately focused on the more concrete approach involving bibundles, which is more suitable for the geometric problems motivating us.

\begin{proof}
Let us consider a principal bibundle $\BB_1\lto P\to \BB_2$ between $\G_1$ and $\G_2$ supporting a Pfaffian Morita equivalence:
\[
\xymatrix{
(\G_1, \omega_1, E_1) \ar@<0.25pc>[dr] \ar@<-0.25pc>[dr]  & \ar@(dl, ul) &  (P, \theta, \Phi) \ar[dl]^{\mu_1}\ar[dr]^{\mu_2} & \ar@(dr, ur) & (\G_2, \omega_2, E_2) \ar@<0.25pc>[dl] \ar@<-0.25pc>[dl] \\
&\BB_1  & &  \BB_2 &}.
\]
By Definition~\ref{def:Pfaffain-Morita-equivalence} we have therefore a form $\theta\in \Omega^1(P, \mu_1^*E)$ and an isomorphism $\Phi:\mu_1^*E_1\to \mu_2^*E_2$.

Let now $P_1$ be a principal $\G_1$-bundle, with moment map $\tau_1:P_1\to \BB_1$.
Recall (see the discussion after Proposition~\ref{prop:princ_ME}) that we can construct a principal $\G_2$-bundle $P_2$ by taking the pullback $\mu_1^*P_1\cong P_1\times_{\BB_1} P$ and then the quotient by the principal action of $\G_1\ltimes P$ induced by the diagonal action of $\Sigma_1$. The action of $P\rtimes \G_2$ induced by $\G_2$ acting on the second factor makes the quotient into a principal $\G_2$-bundle $P_2$. 
We use $\tau_2:P_2\to \BB_2$ to denote the moment map of $P_2$.
%\item A statement totally analogous to (and arguably even better known than)  Proposition~\ref{prop:principal_category} holds for representations, so $E_1$ can be transported to a representation $E_2$.

What one has to do is constructing a form $\theta_2\in \Omega^1(P_2, \mu_2^*E_2)$ which makes the action of $\G_2$ a principal Pfaffian bundle. We can consider the form $\widehat{\theta}=\pr_2^*\theta-\pr_1^*\theta_1$ on $\mu_1^*P$. Notice that, post-composing with the isomorphism of coefficents induced by $\Phi$, $\pr_2^*\theta$ can be thought of as $\pr_2^*\mu_2^*E_2$-valued; the same is true for $\pr_1^*\theta_1$ since $\Phi$ induces the isomorphism (still denoted by $\Phi$)
\[
\Phi: \pr_1^*\tau_1^*E_1 \to \pr_1^*\mu_2^*E_2,\quad (p, a_1, p_1)\mapsto (p, \Phi(p, a_1), p_1).
\]

In what follows, $m_{P_1}$ denotes the action of $\Sigma_1$ on $P_1$, $m_P^1$ denotes the action of $\Sigma_1$ on $P$ and $m_P^2$ denotes the action of $\Sigma_2$ on $P$.
\begin{enumerate}

\item First, we claim that {\it $\widehat{\theta}$ is $\G_1\ltimes P$-basic}.

\begin{itemize}
\item {\it $\widehat{\theta}$ is horizontal}. To see horizontality, first observe that the space tangent to the orbits is given by vectors of the form
\[
\hat{v}=(dm_{P_1}(v,0), dm_P^1(v,0))
\]
where $v\in \ker(ds)\subset T\G_1$; then $\widehat{\theta}(\hat{v})=0$ follows from multiplicativity of $\theta_1$ and $\theta$. 

\item {\it $\widehat{\theta}$ is equivariant}. Here we need to observe that $\G_1\ltimes P$-equivariance is in principle well defined only in the direction normal to $\G_1\ltimes P$-orbits and with respect to the normal representation of $(\G_1\ltimes P)\ltimes \mu_1^*P$. However, by using the Pfaffian forms, the normal representation can be extended to a representation on the whole $\mu_1^*P$; equivariance holds in this last (stronger!) sense. The action of $(g_1, p)$ on $(v_1, v)\in \mu_1^*P_1$ is given by 
\[
\left( dm_{P_1}(d\tau_1(v_1)^{\omega_1}_g, v_1), dm_P^1(d\mu_1(v)^{\omega^1}_g, v) \right)
\]
where $d\tau_1(v_1)^{\omega_1}_g$ and $d\mu_1(v)^{\omega_1}_g$ denote $s$-lifts at $g$ tangent to $\ker(\omega_1)$. One then sees, again using multiplicativity of $\theta_1$ and $\theta$ with respect to $\omega_1$, that $\widehat{\theta}$ is equivariant. 
\end{itemize}
Since $\widehat{\theta}$ is basic, there is a form $\theta_2$ induced on the quotient $P_2$.

\item {\it $\theta_2$ is multiplicative}. Recall that we turned $P_2$ into a left $\G_2$-space by acting with inverses. The left action of $\Sigma_2$ on $P_2$ is denoted by $m_{P_2}$. If $v_2\in TP_2$, we know that $\theta_2(v_2)=\Phi\circ \widehat{\theta}(\hat{v}_2)$ for any lift $\hat{v}_2$ with respect to the quotient projection. Because of how the action is constructed, the lift of $dm_{P_2}(w_2, v_2)$ at $(p_1, p)$, where $w_2\in T_{g_2}\G_2$ is a compatible vector, is computed as 
\[
d\widetilde{m}(\hat{v_2}, \widehat{di(w_2)})
\]
where $\widehat{di(w_2)}$ is the lift of $di(w_2)$ at $(p\cdot g_2, g^{-1}_2)$ compatible with $\hat{v}_2$ and $\widetilde{m}$ is the right action of $P\rtimes \G_2$ on $\mu_1^*P_1$. With this remark at hand, the multiplicativity equation for $\theta_2$ is obtained from the multiplicativity of $\theta$ with respect to $\omega_2$ and the properties of $\Phi$. In fact
\begin{align*}
(\theta_2)_{\mu_2(p)}(dm_{P_2}(v_2, w_2))=\Phi\circ (\pr_2^*\theta-\pr_1^*\theta_1)_{(p_1, p)}(d\widetilde{m}(\hat{v_2}, \widehat{di(w_2)}))\\
=g_2\cdot (\theta)_{p}(v_P)+(\omega_2)_{g_2}(w_2)-g_2\cdot (\theta_1)_{p_1}(v_1)
\end{align*}
where we write $\hat{v_2}=(v_1, v_p)\in T(P_1\times_{\BB_1} P)$. On the other hand 
\begin{align*}
(\omega_2)_{g_2}(w_2)+g_2\cdot (\theta_2)_{\mu_2(p)}(v_2)=(\omega_2)_{g_2}(w_2)+g_2\cdot \left( \theta_{p}(v_P)-(\theta_1)_{p_1}(v_1) \right).
\end{align*}

\item {\it $\theta_2$ is Pfaffian}. Now one has to show that $\ker(d\tau_2)\cap \ker(\theta_2)=\ker(d\pi_2)\cap \ker(\theta_2)$, where we use $\pi_i:P_i\to M_i$ for the quotient projections. The intersection $\ker(d\tau_2)\cap \ker(\theta_2)$ is computed as the quotient of the intersection of $\ker(\hat{\theta})$ with the vertical bundle of the projection $\mu_1^*P_1\to P$, which is readily seen to be $ \ker(d\tau_1)\cap\ker(\theta_1)$. As for the intersection $\ker(d\mu_2)\cap \ker(\theta_2)$, it is isomorphic to $ \ker(d\tau_1)\cap \ker(\theta_1)$, since the quotient projection $\pi_2$ is induced by the projection $\mu_1^*P_1\to M_1$ and we have
\[
\ker(d\mu_1)\cap\ker(\theta) = \ker(d\mu_2)\cap\ker(\theta),
\]
\[
\ker(d\mu_2)\cap\ker(\theta)\cong  \ker (d\pi_1)\cap\ker(\theta_1),
\]
and
\[
\ker (d\pi_1)\cap\ker(\theta_1)= \ker(d\tau_1)\cap \ker(\theta_1).
\]
\end{enumerate}

%hence, it defines a $\mu_2^*E_2$-valued form $\theta_2$ on $P_2$. To conclude, one checks that the action of $P\rtimes \G_2$ on $\mu_1^*E_1$ descends to a Pfaffian action. 
Performing the construction backwards, one gets a principal Pfaffian bundle $(P_1,\theta_1')$ which is isomorphic to $P_1$ via a map preserving the form (it is worth noticing here that the base manifolds of $P_1$ and of $P_2$ are diffeomorphic).
\end{proof}
%{
%\color{red} {\bf Problem.} I don't have the strength to check in detail all of this, but it should work. A potential thing of concern: there is a natural way to get a representation of $\G_1$ out of a representation of $\G_2$, i.e.\ quotient out $\mu_1^*E_1$ by the diagonal action of $\G_1\ltimes P$. The interplay of this with the isomorphism $\Phi$ seems funny to me. 
%}

\begin{remark}\label{rmk:pb-correspondence-when-iso}
Let us assume that $(\Sigma_1, \omega_1, E_1)$ and $(\Sigma_2, \omega_2, E_2)$ are Pfaffian isomorphic. The Pfaffian isomorphism 
\[
(\Psi, \Phi): (\Sigma_1, \omega_1, E_1)\to (\Sigma_2, \omega_2, E_2)
\]
induces a one to one correspondence between $(\Sigma_1, \omega_1, E_1)$-bundles and principal $(\Sigma_2, \omega_2, E_2)$-bundles. If $(P_1, \theta_1)$ is a principal $(\Sigma_1, \omega_1, E_1)$-bundle, then $(P_1, \Psi\circ \theta_1)$ is a principal $(\Sigma_2, \omega_2, E_2)$-bundle -- where $\Sigma_2$ acts on $P_1$  through $\Phi$ in the obvious sense -- and this clearly defines a one to one correspondence. 
Using the Morita equivalence from Lemma~\ref{lemma:morphism_which_are_PME}, we see that the principal $(\Sigma_2, \omega_2, E_2)$-bundle constructed out of a principal $(\Sigma_1, \omega_1, E_1)$-bundle by following the lines of the proof of Proposition~\ref{prop:Pfaffian_principal_category} is actually isomorphic to $(P_1, \Psi\circ \theta_1)$.
\end{remark}

\subsection{Morita equivalences and transitive groupoids}

In this subsection, we focus on the case when the Pfaffian groupoid $(\G, \omega, E)$ is in particular a transitive Lie groupoid, i.e.\
\[
(s, t): \Sigma\to \BB\times \BB\quad g\mapsto (s(g), t(g))
\]
is a surjective submersion (Definition \ref{def:trans_gpds}). When the arrow space $\G$ is second countable, if $(s, t)$ is surjective then it is also submersive (Remark \ref{rmk:transitivity_and_second_countability}). For the rest of this chapter, we do not need to work with groupoids of germs anymore. Consequently, we make the following assumption.

\begin{axiom}{H-SC}\label{ax:Haus_and_2nd_count}
From now on, unless otherwise specified, the arrow space $\G$ of any Lie groupoid $\G\tto \BB$ is assumed to be Hausdorff and second countable. 
\end{axiom}

In this setting, $\G$ is transitive if and only if for all $x\in \BB$:
\[
\text{for all }y\in \BB \text{ there is } g \in \G \text{ such that } s(g)=x,\ t(g)=y .
\]

Recall from Example \ref{ex:ME_transitive} that a transitive Lie groupoid $\G\tto \BB$ is Morita equivalent to its isotropy Lie group $\G_x$, for any $x\in \BB$. The bibundle realising the equivalence between $\G$ and the isotropy $\G_x$ at (say) $x\in \BB$ is the $s$-fibre $s^{-1}(x)$ over $x$.

It makes sense to investigate how to make the above statement Pfaffian. The outcome is

\begin{theorem}\label{prop_transitive_Pfaffian_Morita_equivalence}
Any full transitive Pfaffian groupoid $(\G, \omega, E)$ is Pfaffian Morita equivalent to the (not neceesarily full) Pfaffian group $(G, \bar{\omega}_{\rm MC}, V)$, where 
\begin{itemize}
\item $G=\Sigma_x$ is the isotropy group of $\G$ at $x\in \BB$;
\item $V=E_x$;
%is a representation of $G$ (completely determined by $\omega$);
\item $\bar{\omega}_{MC} = l \circ \omega_{MC}$ is the composition of the Maurer-Cartan form $\omega_{MC}:G\to \mathfrak{g}$ and the linear map $l: \mathfrak{g}\to V$ induced by $\omega_x$ (see Proposition~\ref{prp:char-pf-groups}).
%(the quotient projection with respect to the symbol space).
\end{itemize}
\end{theorem}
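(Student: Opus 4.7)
The plan is to realise the Pfaffian Morita equivalence on the classical bibundle $P := s^{-1}(x) \subset \Sigma$, equipping it with Pfaffian data pulled back from $\omega$. I would take $\mu_1 := t|_P : P \to \BB$ as the moment map for the left action of $\Sigma$ on $P$ by groupoid multiplication, and $\mu_2 : P \to \{x\}$ (the constant map) as the moment map for the right action of the isotropy group $G = \Sigma_x$ on $P$, again by multiplication. Transitivity of $\Sigma$ ensures that $P$ realises a classical Morita equivalence between $\Sigma$ and $G$. The Pfaffian enhancement consists of the form $\theta := \omega|_P \in \Omega^1(P, \mu_1^* E)$ together with the bundle isomorphism $\Phi : \mu_1^* E \to \mu_2^* V = P \times V$ defined fibrewise by $\Phi_p(\alpha) := p^{-1} \cdot \alpha$, which is well-defined because each $p \in P$ is an arrow $x \to t(p)$ and hence acts as a vector-space isomorphism $E_{t(p)} \to E_x = V$.

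The verifications required by Definition~\ref{def:princ-Pfaff-bibundle} split naturally into routine parts and the key Pfaffian content. That $(P, \theta)$ is a left principal $(\Sigma, \omega, E)$-bundle is immediate: multiplicativity of $\theta$ is the restriction of the multiplicativity of $\omega$ to pairs $(g, p) \in \Sigma\times_\BB P$; the condition $\ker(\theta) \cap \ker(d\mu_1) = \ker(\theta) \cap \ker(d\mu_2)$ reduces, since $\mu_2$ is constant, to $\ker(\theta) \subset \ker(dt|_P)$, which holds because $TP = \ker(ds)|_P$ and $\ker(\omega) \cap \ker(ds) = \ker(\omega) \cap \ker(dt)$ by condition (v) of Definition~\ref{def_Pfaffian_groupoid-form}; and fullness of $\theta$ follows from $s$-transversality together with fullness of $\omega$. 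That $\Phi$ is a morphism of $\Sigma \ltimes P$-representations (with $\mu_2^*V$ carrying the trivial representation) and of $P \rtimes G$-representations (with $\mu_1^*E$ carrying the trivial one) amounts to the identities $(gp)^{-1} g = p^{-1}$ and $(ph)^{-1} = h^{-1} p^{-1}$, respectively. The commutativity of the two actions on $P$ is the associativity of groupoid multiplication.

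The core step --- and where all the Pfaffian content concentrates --- is the right multiplicativity of $\Phi \circ \theta$, i.e.\
\[
m^*(\Phi\circ\theta)_{(p,h)} \;=\; h^{-1}\cdot\pr_1^*(\Phi\circ\theta)_{(p,h)} \,+\, h^{-1}\cdot\pr_2^*\bar{\omega}_{{\rm MC},(p,h)}
\]
for $p \in P$ and $h \in G$. Evaluating on $(w, 0)$ with $w \in T_pP$, the left multiplicativity of $\omega$ on the pair $(p,h)$ gives $\omega_{ph}(dR_h w) = \omega_p(w)$; post-composing with $\Phi_{ph}$ yields $h^{-1}\cdot p^{-1}\cdot \omega_p(w) = h^{-1}\cdot(\Phi\circ\theta)_p(w)$. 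Evaluating on $(0, u)$ with $u \in T_h G$, the same multiplicativity gives $\omega_{ph}(dL_p u) = p\cdot \omega_h(u)$, and post-composing with $\Phi_{ph}$ produces $h^{-1}\cdot\omega_h(u)$; this matches the required Maurer-Cartan term precisely because Lemma~\ref{lemma:Pfaffian_and_MC}, restricted to the isotropy $\Sigma_x$, identifies $\omega|_{\Sigma_x}$ with $l\circ \omega_{\rm MC} = \bar{\omega}_{\rm MC}$ via right invariance. The main obstacle is less conceptual than bookkeeping: carefully tracking how $\Phi$ interacts with the two actions and reconciling this with the sign/side conventions of Remark~\ref{remark:mullt-from-right}. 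Once the identification $\omega|_{\Sigma_x} = \bar{\omega}_{\rm MC}$ is in place, the triple $(P, \theta, \Phi)$ is the desired (full) Pfaffian Morita equivalence.
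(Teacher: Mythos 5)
Your proposal is correct and follows exactly the paper's route: the bibundle is $(s^{-1}(x),\omega|_{s^{-1}(x)},\Phi)$ with $\Phi_p(\alpha)=p^{-1}\cdot\alpha$, the left/right principality and representation-morphism properties reduce to restrictions of the multiplicativity of $\omega$, and the identification $\omega|_{\Sigma_x}=\bar{\omega}_{\rm MC}$ comes from Lemma~\ref{lemma:Pfaffian_and_MC}. The only difference is that you spell out the ``careful but straightforward computations'' (right multiplicativity on the two factors, the Pfaffian kernel condition via $TP=\ker(ds)|_P$ and axiom (v), fullness via $s$-transversality) that the paper leaves implicit.
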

%\textcolor{red}{See also Propositions~\ref{prp:char-pf-groups} and~\ref{prp:isotropies-pf-groups} and Corollary~\ref{cor:Pfaffian-groups-MC}. VAGUE/NOT NEEDED? Are they used in the proof or to clarify the statement?}

Notice that the Pfaffian Morita equivalence class of $(G, \omega|_{G}, E_x )$ is independent from the choice of $x\in \BB$, see Proposition~\ref{prp:isotropies-pf-groups} and Corollary~\ref{lemma:Pfaffian_ME_of_groups}.

%In particular, Corollary~\ref{cor:Pfaffian-groups-MC} holds for the isotropy group $\G_x$ at $x\in \BB$ of a Pfaffian groupoid $(\G, \omega, E)$ equipped with the restriction of $\omega$ and the representation $V=E_x$ containing $\g_x/\mathfrak{g}_x(\omega)$ as a subrepresentation.
\begin{proof}
The bibundle needed 
%to prove Proposition~\ref{prop_transitive_Pfaffian_Morita_equivalence} 
to prove the claim is not hard to guess: one considers the triple $(s^{-1}(x), \omega|_{s^{-1}(x)}, \Phi)$. 
%is a principal Pfaffian bibundle between $(\G, \omega)$ and the Pfaffian group $(G_x, \omega|_{G_x}, E_x )$. 
Here, 
\[
\Phi: t^*E\to s^{-1}(x)\times E_x
\]
is given by 
\[
(g, \alpha_{t(g)})\mapsto (g, g^{-1}\cdot \alpha_{t(g)}).
\]
Careful but straightforward computations show that $(s^{-1}(x), \omega|_{s^{-1}(x)})$ is a (right) principal $(G, \omega|_{G}, E_x )$-bundle and that $(s^{-1}(x), \omega|_{s^{-1}(x)}, \Phi)$ satisfies all the defining properties of a principal Pfaffian bibundle. See also~\cite{FRANCESCOPAPER} for the full case. By Lemma~\ref{lemma:Pfaffian_and_MC}, $\omega|_{G}$ coincide with $\bar{\omega}_{\rm MC}$, where the map $l: \g \to E_x$ is a map of representations.
\end{proof}
Recall here Remark~\ref{remark:isotropy-not-full-not-Lie}: $(G,\bar{\omega}_{\rm MC}, V)$ is, in general, not full, 
%nor extended 
even when $(\Sigma, \omega, E)$ is so. 
%{
%\color{red} {\bf Problem.} Given a Pfaffian group, find ME Pfaffian groupoids over a given base. In the non-Pfaffian setting, this is equivalent to classifying principal bundles over the given base.
%}
%a representation $E$ of $\mathcal{G}$, and that there is an isomorphism $E\cong A/\mathfrak{g}_M(\omega)$, $\mathfrak{g}_M(\omega)$ being the symbol space along $M$. The representation induced on $A/\mathfrak{g}_M(\omega)$ by the isomorphism above can be explicitly described and can be understood as an adjoint representation.
\begin{remark}\label{remark_combination_props}
Let us combine Proposition~\ref{prop:Pfaffian_principal_category} with Theorem~\ref{prop_transitive_Pfaffian_Morita_equivalence}. Given a full transitive Pfaffian groupoid $(\G, \omega, E)$ over $\BB$, a principal $(\G, \omega, E)$-bundle $(P, \theta)$ over $M$ induces a principal $(\G_x, \omega|_{x}\circ \omega_{\rm MC}, E_x)$-bundle $(P_x, \theta_{P_x})$ over $M$ following the construction in the proof of Proposition~\ref{prop:Pfaffian_principal_category}. However, if $\mu$ denotes the moment map of the action of $\Sigma$ on $P$, the pair $(\mu^{-1}(x), \theta|_{\mu^{-1}(x)})$ is also a principal $(\G_x, \omega|_{x}\circ \omega_{\rm MC}, E_x)$-bundle over $M$. 

The bundle $(P_x, \theta_{P_x})$ can also be obtained from $(\mu^{-1}(x), \theta|_{\mu^{-1}(x)})$ by restricting the bibundle $(s^{-1}(x), \omega|_{s^{-1}(x)}, \Phi)$ from above to a bibundle between $(\G_x, \omega|_{x}\circ \omega_{\rm MC}, E_x)$ and $(\G_x, \omega|_{x}\circ \omega_{\rm MC}, E_x)$ itself and following the construction in the proof of Proposition~\ref{prop:Pfaffian_principal_category}.

In conclusion, there is an isomorphism of principal $(\G_x, \omega|_{x}\circ \omega_{\rm MC}, E_x)$-bundles
\[
(P_x, \theta_{P_x})\to (\mu^{-1}(x), \theta|_{\mu^{-1}(x)}).
\]
over the identity $M\to M$.
\end{remark}

\subsection{Morita equivalence and almost $\Gamma$-structures}

Let $\Gamma$ be Lie pseudogroup over $\BB$; then the pair $(J^k\Gamma,\omega^k)$, where $\omega^k$ is the restriction of the Cartan form on $J^k(\BB\times \BB)$, is a full Pfaffian groupoid for all $k\geq k_\Gamma$ (Example \ref{exm:jet-groupoids}). If we assume $\Gamma$ to be transitive (Definition~\ref{def:transitive-orbits}), the jet groupoids $J^k\Gamma\tto \BB$ are transitive as well (Remark~\ref{rmk:transitivity_of_pseudo}). We assume for simplicity $k_\Gamma = 0$ (Definition~\ref{def:Lie_pseudo}).

\begin{proposition}\label{prop:group_tower}
%Let $\Gamma$ be a transitive Lie pseudogroup over $\BB$, and let $k\geq k_\Gamma+1$. 
The following facts hold true. %\textcolor{red}{Isn't (i) just a repetition of Theorem \ref{prop_transitive_Pfaffian_Morita_equivalence}? What you want to write is more a reformulation of the Pfaffian form on the isotropy group, right?}

\begin{enumerate}
\item The isotropy Pfaffian group $(G^k,l^k,V^{k-1})$ at $x\in \BB$ of $(J^k\Gamma,\omega^k)$ from Theorem \ref{prop_transitive_Pfaffian_Morita_equivalence} is given by
%$(J^k\Gamma,\omega^k)$ is Pfaffian Morita equivalent to the Pfaffian group $(G^k,l^k,V^{k-1})$ where 
\begin{itemize}
%\item $G^k \cong \{j^k_x\varphi: \varphi(x) = x\}$, where $x\in \BB$ is any chosen point;
\item $V^{k-1}\cong A^{k-1}_x$, where we use the notation $A^k:={\rm Lie}(J^k\Gamma)$;
\item $l^k: G^k \to V^{k-1}$ is the only map making the diagram
\[
\begin{tikzcd}
G^k\arrow[r,"\pr|_{G^k}"]\arrow[dr,"l^k"]\arrow[d,"\omega_{\rm MC}^k"] & G^{k-1}\arrow[d,"\omega_{\rm MC}^{k-1}"]\\
\g^k\arrow[r,"d\pr|_{\g^{k}}"] & \g^{k-1} \subset V^{k-1}
\end{tikzcd}
\]
commutative (here, $\pr:J^{k}\Gamma\to J^{k-1}\Gamma$ is the canonical projection, and we are using the fact that $V^{k-1}\cong A^{k-1}_x$).
\end{itemize}
\item The canonical projection $\pr:J^{k+1}\Gamma\to J^{k}\Gamma$ induces a morphism of Pfaffian groups
\[
(\Phi,\Psi):(G^{k+1},l^{k+1},V^{k})\to (G^{k},l^{k},V^{k-1}) 
\]
such that:
\begin{itemize}
\item $\Phi:G^{k+1}\to G^{k}$ is a surjective submersion;
\item $\ker(d\Phi) = \ker(l^{k+1})$;
\item $\Psi|_{\g^k}:\g^k\to V^{k-1}$ coincides with the differential of $\Phi:G^k\to G^{k-1}$ at the identity (in particular, it takes values in $\g^{k-1}$ and is a Lie algebra map).
\end{itemize}
\end{enumerate}
\end{proposition}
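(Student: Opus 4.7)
Write $\pr_k\colon J^k\Gamma \to J^{k-1}\Gamma$ for the canonical projection. The two crucial identifications I will rely on, both from Example~\ref{exm:jet-groupoids}, are that $(J^k\Gamma,\omega^k)$ is a full Pfaffian groupoid whose coefficient representation is $E^k \cong A^{k-1}$ via $\omega^k|_{\BB} = d\pr_k|_{\BB}$, and that $\pr_{k+1}$ is a morphism of Pfaffian groupoids. Both items of the proposition then follow by specialising Theorem~\ref{prop_transitive_Pfaffian_Morita_equivalence} at $x\in \BB$ and chasing identifications.

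\textbf{For item $1$}, I specialise Theorem~\ref{prop_transitive_Pfaffian_Morita_equivalence} to obtain the isotropy Pfaffian group $(G^k,\bar\omega^k_{\rm MC},V^{k-1})$ with $G^k = (J^k\Gamma)_x$, $V^{k-1} = E^k_x \cong A^{k-1}_x$, and Pfaffian form $\bar\omega_{\rm MC}^k = l^k\circ \omega_{\rm MC}^k$, where (Proposition~\ref{prp:char-pf-groups}) $l^k\colon \mathfrak{g}^k \to V^{k-1}$ is the restriction of $(\omega^k)_x$ to $\mathfrak{g}^k \subset A^k_x$. Thanks to $\omega^k|_\BB = d\pr_k|_\BB$, this restriction equals $d\pr_k|_{\mathfrak{g}^k}$, landing in $\mathfrak{g}^{k-1} \subset V^{k-1}$ since $\pr_k$ maps $G^k$ into $G^{k-1}$. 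Identifying the Pfaffian form of the isotropy with $l^k$, the lower triangle of the diagram is then Corollary~\ref{cor:Pfaffian-groups-MC}, while the outer square is the naturality of the Maurer--Cartan form under the Lie group homomorphism $\pr_k|_{G^k}\colon G^k\to G^{k-1}$, whose derivative at the identity is precisely $d\pr_k|_{\mathfrak{g}^k}$.

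\textbf{For item $2$}, set $\Phi := \pr_{k+1}|_{G^{k+1}}\colon G^{k+1}\to G^k$ and take $\Psi\colon V^k = A^k_x \to V^{k-1} = A^{k-1}_x$ to be $d\pr_k|_x$, which is precisely the representation map forced on $\pr_{k+1}$ (cf.\ the remark after Definition~\ref{def:morphism_Pfaffian_groupoids}) under the identifications $E^{k+1} \cong A^k$ and $E^k \cong A^{k-1}$. That $(\Phi,\Psi)$ is a Pfaffian group morphism is the restriction to the isotropy over $x$ of the fact that $\pr_{k+1}$ is a Pfaffian morphism. Surjectivity and submersivity of $\Phi$ follow from those of $\pr_{k+1}$ (Definition~\ref{def:Lie_pseudo}), combined with $\ker(d\pr_{k+1}|_\BB) = \mathfrak{g}(\omega^{k+1})|_\BB \subset \ker(ds)\cap \ker(dt)$ from Example~\ref{exm:jet-groupoids}, which ensures that the restriction of $\pr_{k+1}$ to $s$- and $t$-fibres over $x$ remains a surjective submersion.

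Finally, writing $l^{k+1} = d\pr_{k+1}|_{\mathfrak{g}^{k+1}}\circ \omega_{\rm MC}^{k+1}$ via item~$1$ at level $k+1$, and using that $\pr_{k+1}$ intertwines right translations, for $v\in T_gG^{k+1}$ one has $l^{k+1}(v) = dR_{\Phi(g)^{-1}}(d\Phi(v))$, so $\ker(l^{k+1})_g = \ker(d\Phi)_g$. And $\Psi|_{\mathfrak{g}^k} = d\pr_k|_{\mathfrak{g}^k}$ is the derivative at the identity of the Lie group homomorphism $\pr_k|_{G^k}\colon G^k\to G^{k-1}$ from item~$1$, hence automatically a Lie algebra morphism valued in $\mathfrak{g}^{k-1}$. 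The only genuine subtlety throughout is to keep careful track of the index shift between jet order and coefficient representation; once the identification $E^k \cong A^{k-1}$ and the formula $\omega^k|_\BB = d\pr_k|_\BB$ are firmly in hand, the remainder of the argument is essentially formal.
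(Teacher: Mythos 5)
Your proof is correct, and your treatment of item (1) matches the paper's (both reduce to the identifications $E^k\cong A^{k-1}$ and $\omega^k|_{\ker(ds)}=d\pr$ from Example~\ref{exm:jet-groupoids}, plus naturality of the Maurer--Cartan form). For item (2), however, you take a genuinely different and more elementary route: you define $\Phi$ directly as the restriction $\pr|_{G^{k+1}}$ and $\Psi$ as $d\pr|_x$, and verify the listed properties by hand. The paper instead constructs the morphism $(G^{k+1},l^{k+1},V^k)\to(G^k,l^k,V^{k-1})$ abstractly, by observing that the jet projections restrict to morphisms of the principal Pfaffian bundles $J^{k+1}_x\Gamma\to J^k_x\Gamma$ (in the generalised sense where the structure groupoid varies) and then invoking the gauge characterisation of Pfaffian Morita equivalence, Theorem~\ref{prp:PME_is_gauge_construction}, to produce the induced maps of isotropy Pfaffian groups; this emphasises that the whole tower of Morita equivalences is compatible, which is what the subsequent remark on the graded algebra $\m$ exploits. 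Your direct approach is cleaner for the bare statement and yields the same maps, since the isotropy Pfaffian group of Theorem~\ref{prop_transitive_Pfaffian_Morita_equivalence} is literally $(J^k\Gamma)_x$ with the restricted form. One small point: your appeal to $\ker(d\pr)\subset\ker(ds)\cap\ker(dt)$ to justify surjectivity and submersivity of $\Phi$ is not really needed -- since $\pr$ is a surjective submersion covering $\id_\BB$ and intertwining the source and target maps, any preimage of an element of $G^k$ automatically lies in $G^{k+1}$, and any $d\pr$-preimage of a vector in $\ker(ds)\cap\ker(dt)$ automatically lies in $\ker(ds)\cap\ker(dt)$; the kernel condition is a valid but superfluous observation here.
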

\begin{proof}
Point $(i)$ follows by taking into account the properties of the jet tower 
\[
\dots \to (J^k\Gamma,\omega^k) \to (J^{k-1}\Gamma, \omega^{k-1})\to \dots \to \BB\times \BB\tto \BB
\]
discussed in Example~\ref{exm:jet-groupoids}. 

For point $(ii)$, observe that the Morita equivalences between jet groupoids and their isotropies are related and fit into a diagram of the form
\[
\xymatrix{
\dots\ar[d] & \ar@(dl, ul) &  \dots\ar[d] & \ar@(dr, ur) & \dots \ar@{-->}[d]  \\
(J^{k+1}\Gamma, \omega^{k+1})\ar[d]  & \ar@(dl, ul) &  (J^{k+1}_x\Gamma,\omega^{k+1}|_{J^{k+1}_x\Gamma})\ar[d] & \ar@(dr, ur) & (G^{k+1}, l^{k+1}, V^{k-1})\ar@{-->}[d]
\\
(J^k\Gamma, \omega^k)\ar[d] & \ar@(dl, ul) &  (J^k_x\Gamma,\omega^k|_{J^k_x\Gamma})\ar[d] & \ar@(dr, ur) & (G^k, l^k, V^{k-1})\ar@{-->}[d]  \\
\dots\ar[d]^{pr} & \ar@(dl, ul) &  \dots\ar[d] & \ar@(dr, ur) & \dots\ar@{-->}[d]  \\
(J^1\Gamma, \omega^1) \ar@<0.25pc>[dr] \ar@<-0.25pc>[dr]  & \ar@(dl, ul) &  (J^1_x\Gamma,\omega^1|_{J^1_x\Gamma}) \ar[dl]^{\mu} & \ar@(dr, ur) & (G^1, l^1, \mathbb{R}^n)  \\
&\BB  & &  &}
\]
where $n = \dim(X)$, the vertical arrows are the canonical projections, and we have to show that the dashed arrows exist.

Notice first that
\[
\pr^*(\omega^{k}|_{J^k_x\Gamma}) = d\pr \circ (\omega^{k+1}|_{J^{k+1}_x\Gamma}).
\]
In simpler terms, the jet projections induce morphisms of principal Pfaffian bundles in a generalised sense: the Pfaffian groupoid itself is allowed to vary, cf.\ Definition~\ref{def:princ-pfaff-bundles-morph}.
The dashed arrows are then constructed by making use of Theorem~\ref{prp:PME_is_gauge_construction}, and the properties listed in the claim follow from the analogue properties of the corresponding maps of jet groupoids, cf.\ Example~\ref{exm:jet-groupoids}.
\end{proof}
\begin{remark}
The tower of Pfaffian groups
\[
\dots \to (G^{k+1},l^{k+1},V^{k})\to (G^k,l^k,V^{k-1})\to \dots \to (G^1,l^1,\mathbb{R}^n)
\]
coming from Proposition \ref{prop:group_tower} can be equivalently encoded infinitesimally.
%, as a tower of Lie algebras
%\[
%\dots \to (\g^{k+1},d_e l^{k+1},V^{k})\to (\g^k,d_e l^k,V^{k-1})\to \dots \to (\g^1,d_e l^1,\mathbb{R}^n).
%\]
A complete discussion of the infinitesimal objects associated to Pfaffian group(oid)s, namely Spencer operators and Pfaffian algebroids, is beyond the scope of our work; see~\cite{MARIA}. However, by making use of the properties of the jet projections, see Example~\ref{exm:jet-groupoids}, and the properties listed in Proposition~\ref{prop:group_tower} above, we can observe what follows. Let us set
\[
\m^{k} := V^{k}/V^{k-1}\cong \ker (d_e l^{k+1}),\quad k\in \mathbb{N}.
\]
The direct sum
\[
\m_* := \bigoplus\limits_{k\geq 1} \m^k
\]
is graded Lie algebra. The Lie bracket on $\m_*$ extends to a graded bracket on 
\[
\m = \bigoplus\limits_{k\in \mathbb{N}} \m^k,
\]
due to the fact that each Lie algebra $\g^{k+1} = {\rm Lie}(G^{k+1})$ comes with a representation on $V^{k}$, and that such representation is compatible with the adjoint representation, in the sense that the following diagram
%\g^{k+1}\arrow{r}\arrow{d}["d_el^{k+1}"] 
%	&& \GL(V^{k})\supset \GL(\g^k) \\
%\g^k\arrow[\hookrightarrow]{ur}["ad"] 
%	&& 
\[
\begin{tikzcd}
\g^{k+1}\arrow[r]\arrow[d,"d_e l^k"]
	& \GL(V^{k})\supset \GL(\g^k) \\
\g^k \arrow[ur,"{\rm ad}",swap]
	&
\end{tikzcd}
\]
commutes\footnote{This is a consequence of Corollary~\ref{cor:Pfaffian-groups-MC}, i.e.\ that Pfaffian forms on Lie groups are necessarily given by the Maurer-Cartan form post-composed with a projection}. It is possible to show that the bracket on $\m$ satisfies the Jacobi identity as well. The proof requires to look at the projective limit of the tower of jet groupoids; see for instance~\cite{AccCr2020}.

The algebra $\m$ was studied by Guillemin and Sternberg~\cite{GUILLEMINSTERNBERG} and Singer and Sternberg~\cite{SingerSternberg65}\footnote{In~\cite{SingerSternberg65} the authors adopt the framework of Lie algebra sheaves, the infinitesimal incarnation of pseudogroups.} as a fundamental abstract tool to discuss Cartan's equivalence problem for geometries arising from pseudogroups~\cite{CARTANINFINITEGROUPS}.

The conceptual reason behind the importance of $\m$ is clarified by Proposition~\ref{prop:group_tower}: the tower of isotropy Pfaffian groups is Morita equivalent to the tower of jet groupoids of $\Gamma$.
\end{remark}

The following corollary -- an immediate application of Proposition~\ref{prop:Pfaffian_principal_category} -- leads us back to Cartan bundles~\ref{subsection_Cartan_bundles} (recall that we imposed Axiom \ref{axiom_full}).

\begin{corollary}\label{cor:corresp_cartan_bundles}
There is a one to one correspondence between almost $\Gamma$-structures of order $k$ (Definition~\ref{def:almost-gamma-structure}) and principal Cartan $(G^k,l^k,V^{k-1})$-bundle, where $(G^k,l^k,V^{k-1})$ is the Pfaffian isotropy group of $(J^k\Gamma, \omega^k)$.
\end{corollary}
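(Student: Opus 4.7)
The plan is to derive this corollary as a direct application of the general machinery built in the previous subsections, specialised to the jet tower. First I would unpack what each side of the claimed correspondence is: an almost $\Gamma$-structure of order $k$ is, via the Example following Definition~\ref{def:almost-gamma-structure}, a full principal Pfaffian $(J^k\Gamma,\omega^k)$-bundle (together with the extension to $J^k(M,\BB)$, which under Axiom~\ref{axiom_full} is automatic from the transitivity of $J^k\Gamma$ inside $J^k({\rm Diff}_{\rm loc}(\BB))$); a Cartan $(G^k,l^k,V^{k-1})$-bundle is, by Definition~\ref{def:Cartan-bundle}, precisely a full principal Pfaffian $(G^k,l^k,V^{k-1})$-bundle. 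Both sides of the claim are therefore categories of full principal Pfaffian bundles, for two specific Pfaffian groupoids.

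The main tool is then Proposition~\ref{prop:Pfaffian_principal_category}: it suffices to exhibit a Pfaffian Morita equivalence between $(J^k\Gamma,\omega^k)$ and $(G^k,l^k,V^{k-1})$. This is supplied, essentially for free, by Theorem~\ref{prop_transitive_Pfaffian_Morita_equivalence} once one checks transitivity: since $\Gamma$ is a transitive Lie pseudogroup, Remark~\ref{rmk:transitivity_of_pseudo} yields that $J^k\Gamma\tto\BB$ is a transitive Lie groupoid, and by Example~\ref{exm:jet-groupoids} the triple $(J^k\Gamma,\omega^k,A^{k-1})$ is a full Pfaffian groupoid. Applying Theorem~\ref{prop_transitive_Pfaffian_Morita_equivalence} at a chosen basepoint $x\in\BB$ produces the Pfaffian Morita equivalence whose isotropy side is exactly the Pfaffian group $(G^k,l^k,V^{k-1})$ identified in Proposition~\ref{prop:group_tower}(i). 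Proposition~\ref{prop:Pfaffian_principal_category} then transports principal Pfaffian bundles back and forth, giving the claimed bijection on isomorphism classes.

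The step I would expect to require the most care is the identification of principal Pfaffian $(J^k\Gamma,\omega^k)$-bundles with almost $\Gamma$-structures of order $k$ in the strict sense of Definition~\ref{def:almost-gamma-structure}, i.e.\ with $J^k\Gamma$-reductions of $J^k(M,\BB)$. The ``easy'' direction is recorded in the Example following that definition: any almost $\Gamma$-structure inherits a full Pfaffian form by restriction of the Cartan distribution on $J^k(M\times \BB)$. The converse requires showing that, conversely, every full principal Pfaffian $(J^k\Gamma,\omega^k)$-bundle $(P,\theta)$ sits canonically inside $J^k(M,\BB)$; the cleanest way is to apply the Morita transport of Proposition~\ref{prop:Pfaffian_principal_category} to the canonical inclusion of Pfaffian groupoids $(J^k\Gamma,\omega^k)\hookrightarrow (J^k({\rm Diff}_{\rm loc}(\BB)),\omega^k)$, using the concrete model of the bibundle $s^{-1}(x)$ from the proof of Theorem~\ref{prop_transitive_Pfaffian_Morita_equivalence} (cf.\ Remark~\ref{remark_combination_props}) to realise the bundle $P$ explicitly as a subbundle of $J^k(M,\BB)$.

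Finally, I would verify naturality of the correspondence under isomorphisms of principal Pfaffian bundles, so that one indeed obtains a bijection of isomorphism classes, not just a pointwise assignment; this is automatic since Proposition~\ref{prop:Pfaffian_principal_category} is phrased as an equivalence of groupoids of bundles. Putting these pieces together yields the stated one-to-one correspondence.
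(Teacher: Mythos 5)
Your proposal follows exactly the paper's route: the corollary is presented there as an immediate application of Theorem~\ref{prop:Pfaffian_principal_category} to the Pfaffian Morita equivalence between $(J^k\Gamma,\omega^k)$ and its Pfaffian isotropy group supplied by Theorem~\ref{prop_transitive_Pfaffian_Morita_equivalence} together with the identification of that isotropy group in Proposition~\ref{prop:group_tower}. If anything, you are more explicit than the paper about the one genuinely delicate point, namely matching abstract principal Pfaffian $(J^k\Gamma,\omega^k)$-bundles with honest $J^k\Gamma$-reductions of $J^k(M,\BB)$ as in Definition~\ref{def:almost-gamma-structure}, which the paper leaves implicit.
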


\begin{example}
Let $G\subset \GL(n,\mathbb{R})$ be a group of matrices. The set
\[
\Gamma_G := \{\varphi\in {\rm Diff}_{\rm loc}(\mathbb{R}^n):\ d_x\varphi \in G,\ x\in \mathbb{R}^n\}
\]
is a transitive Lie pseudogroup of order 1. The isotropy group of its first jet groupoid $J^1\Gamma_G\tto \mathbb{R}^n$ is precisely $G$. 
%There is a (easily verified) Pfaffian isomorphism
%\[
%(J^1\Gamma_G,\omega^1) \to (\mathbb{R}^n \times \mathbb{R}^n\times G, d \pr_2).
%\]
The symbol bundle $\g(\omega^1)$ can be seen to coincide with the bundle $\mathbb{R}^n\times \g$ of isotropy Lie algebras.
Theorem~\ref{prop_transitive_Pfaffian_Morita_equivalence} yields a Pfaffian Morita equivalence between the full Pfaffian groupoid $(J^1 \Gamma_G, \omega^1)$ and the Pfaffian group $(G,0, \RR^n)$. By using Theorem~\ref{prop:Pfaffian_principal_category}, we recover the classical one to one correspondence between almost $\Gamma_G$-structures of order 1 and $G$-structures, a particular instance of the correspondence with Cartan bundles from Corollary~\ref{cor:corresp_cartan_bundles}.
\end{example}

\begin{example}
If we consider the Pfaffian group $(G,\omega_{\rm MC}, \g \times \RR^n)$ instead, we see that principal $(G,\omega_{\rm MC}, \g \times \RR^n)$-bundles correspond to $G$-structures with a compatible principal connection, a particular case of Cartan geometries, cf.\ Example~\ref{exm:Cart-geo}. 
%Let us look again at the example above. 
%The Pfaffian form on $J^1\Gamma_G$ can be ``enhanced'' to become a multiplicative connection $\eta^1\in \Omega^1(J^1\Gamma_G,{\rm Lie}(J^1\Gamma_G))$ -- that is, a Pfaffian form with vanishing symbol. 
%%Using the isomorphism 
%%\[
%%(J^1\Gamma_G,\omega^1) \to (\mathbb{R}^n \times \mathbb{R}^n\times G, d \pr_2).
%%\]
%%we simply set $\eta^1 = \omega_{\rm MC} + d\pr_2\in \Omega^1(J^1\Gamma_G,\g \times \RR^n)$. 
%%%Let us assume that the Pfaffian form on $J^1\Gamma_G$ can be ``enhanced'' to become a multiplicative connection $\eta^1$ -- that is, a Pfaffian form with vanishing symbol.
%The Pfaffian Morita equivalence from Theorem~\ref{prop_transitive_Pfaffian_Morita_equivalence} is now between the full Pfaffian groupoid $(J^1 \Gamma_G, \eta^1)$ and the Pfaffian group $(G,\omega_{\rm MC}, \g \times \RR^n)$. 
%By using Definition~\ref{def:almost-gamma-structure} and Proposition~\ref{prop:Pfaffian_principal_category}, we see that principal $(J^1 \Gamma_G, \eta_1)$-bundles correspond to $G$-structures with a compatible principal connection, a particular case of Cartan geometries, cf.\ Example~\ref{exm:Cart-geo}. Again, this is just a particular case of Corollary~\ref{cor:corresp_cartan_bundles}.
\end{example}

As already anticipated, in \cite{AccorneroCattafi2} we treat these (and more) examples in the full generalities of Cartan bundles.

\appendix

\section{Background material}

%\textcolor{red}{
%\begin{example}\label{ex:iso_cont}
%Observe that the Lie group $G\subset \GL(n, \mathbb{R})$ is isomorphic to the isotropy group (at any point) of $J^1\Gamma_G$. Actually
%\[
%J^1\Gamma_G\cong \{(x, y, l):\ l:\mathbb{R}^n\to \mathbb{R}^n,\ l\in G\} = \RR^n \times \RR^n \times G.
%\]
%On the other hand
%\[
%J^1\Gamma_{\rm cont}\cong \{(x, y, l):\ l:\mathbb{R}^{2k+1}\to \mathbb{R}^{2k+1},\ l\in d_y\varphi_y\circ Sp(k,1)\circ d_x \varphi^{-1}_x
%\}
%\]
%%d_y\varphi_y\circ Sp(k, 1)\circ d_y\varphi_y^{-1}
%where we identify $\mathbb{R}^{2k+1}$ with the Heisenberg group $\Hei$ and denote by $\varphi_x$ and $\varphi_y$ the diffeomorphisms of $\mathbb{R}^{2k+1}$ induced by right multiplication by $x\in \mathbb{R}^{2k+1}$ and $y\in \mathbb{R}^{2k+1}$ respectively.
%% and $\circ$ is the composition of maps on $\mathbb{R}^{2k+1}$. 
%It follows that the isotropy group of $J^1\Gamma_{\rm cont}$ is isomorphic to $Sp(k, 1)$. Even more, it follows $J^1\Gamma_{\rm cont}\cong J^1\Gamma_{Sp(k, 1)}$ as Lie groupoids.
%%
%%
%%our model geometric structures on $\mathbb{R}^n$ are not simply given as linear/homogeneous objects on $\mathbb{R}^n$ -- such as a symmetric/skew symmetric bilinear form or a distribution -- but they satisfy additional {\it flatness} requirements; we looked at the flat metric on $\mathbb{R}^n$, at the standard symplectic, hence closed, form on $\mathbb{R}^{2k}$, and at the standard contact distribution on $\mathbb{R}^{2k+1}$, where flatness is more subtly encoded in the existence of an adapted frame behaving like the Heisenberg Lie algebra. 
%\end{example}
%}

\subsection{Lie groupoids and Morita equivalences}\label{app:Lie_gpds}

\subsubsection*{Lie groupoids}

A {\bf groupoid} is a small category where all morphisms are invertible. More explicitly, a {\bf groupoid} $\G$ over $\BB$, denoted by $\G\tto \BB$, consists of a set $\Sigma$ called {\bf arrow space}, together with a set $\BB$ called {\bf unit space}, and equipped with the following {\bf structure maps}:
\begin{itemize}
\item two surjective maps $s:\G\to \BB$ and $t:\G\to \BB$, called respectively the {\bf source} and the {\bf target};
\item an associative ``partial'' {\bf multiplication}
\[
\G \tensor[_s]{\times}{_t} \G \to \G
\]
such that
\begin{itemize}
\item there exists a canonical global {\bf unit} section $u: \BB\to \G$ of $s$ which is a bijection between elements of $\BB$ and units for the multiplication;
\item each element of $\G$ has an inverse.
\end{itemize}
\end{itemize}

A {\bf subgroupoid} of $\G \tto \BB$ is a subcategory $\mathcal{H} \tto \mathbf{Y}$ which is closed under composition and inversion, and therefore is itself a groupoid; it is called {\bf wide} if $\mathbf{Y} = \BB$. A {\bf morphism} of groupoids, simply denoted by $\Phi:\Sigma_1\to \Sigma_2$, is a functor, or, more explicitly, a commutative diagram 
\[
\begin{tikzcd}
\Sigma_1\arrow[d, shift left=-.5ex]\arrow[d, shift right=-.5ex]\arrow[r, "\Phi"]& \Sigma_2\arrow[d, shift left=-.5ex]\arrow[d, shift right=-.5ex]\\
\BB_1\arrow[r, "\phi"]& \BB_2
\end{tikzcd}
\]
respecting the multiplication of arrows; that is, $\Phi(g\cdot h)=\Phi(g)\cdot \Phi(h)$ for all $h, g\in \Sigma_1$ such that $t(h)=s(g)$. 
 
Let $\G$ be a groupoid over $\BB$. The following collection of facts and terminology is used throughout the paper.
\begin{itemize}
\item The {\bf orbit} $\mathcal{O}_x$ of $x\in \BB$ is the set of $y\in \BB$ such that there is some $g\in \G$ with $s(g)=x$, $t(g)=y$. 
\item The intersection $\G_x:=s^{-1}(x)\cap t^{-1}(x)$ possesses a group structure and is called the {\bf isotropy group at $x$}. Isotropy groups at points in the same orbits are isomorphic. 
\item The isotropy group $\G_x$ acts on $s^{-1}(x)$ by right multiplication, the quotient $s^{-1}(x)/\G_x$ is in bijection with the orbits of $x$, and the quotient map can be identified with $t:s^{-1}(x)\to \mathcal{O}_x \subset \BB$. A similar fact hold analogously for the action of $\G_x$ on $t^{-1}(x)$ by left multiplication.
\end{itemize}

A {\bf topological groupoid} is a groupoid $\Sigma\tto \BB$ such that both the arrow space $\Sigma$ and the unit space $\BB$ are topological spaces and all the structure maps (source and target, multiplication, unit section and inversion map) are continuous.

\begin{definition}\label{def:Lie_gpd}
A {\bf Lie groupoid} $\Sigma\tto \BB$ is a groupoid such that 
\begin{itemize}
\item the arrow space is a smooth manifold (possibly non-Hausdorff and non-second countable);
\item $\BB$ is a smooth manifold (Hausdorff, second countable);
\item source and target, multiplication, unit section, and inversion map are smooth;
\item $s$, hence $t$, is a surjective submersion (with Hausdorff fibres).
\end{itemize}
\end{definition}
For details on (Lie) groupoids see e.g.\ \cite[Chapter 1]{MACKENZIE} or \cite[Section 5.1]{MOERDIJK}.
\begin{example}
A Lie group $G$ is a Lie groupoid over the point: $G\tto \{\bullet\}$.
\end{example}
\begin{example}
The main examples that we have in mind are, of course, those associated to pseudogroups $\Gamma$. For any pseudogroup $\Gamma$ over $\BB$, its germ groupoid (cf.\ Theorem~\ref{thm:Haef_corresp}) is a Lie groupoid when equipped with the étale topology (see also Definition~\ref{def:etale_gpd}). Moreover, if $\Gamma$ is a Lie pseudogroup, then $J^k\Gamma\tto \BB$ is a Lie groupoid for all $k\geq k_\Gamma$ (cf.\ Definition~\ref{def:Lie_pseudo}).

Notice that, for any pseudogroup $\Gamma$ over $\BB$, the orbits of the groupoid to $J^k\Gamma$ coincides with the orbits of $\Gamma$ as a pseudogroup (Definition \ref{def:transitive-orbits}). 
\end{example}
\begin{remark}
The fact that in Definition~\ref{def:Lie_gpd} the arrow space is allowed to be non-Hausdorff and non-second countable might look strange at a first glance. We allow this to happen because, if $\Gamma$ is a pseudogroup over $\BB$, then the germ groupoid ${\rm Germ}(\Gamma)\tto \BB$ has arrow space which is rarely Hausdorff and/or second countable (cf.\ Theorem~\ref{thm:Haef_corresp}). However, most of the novel content in this paper is presented under the assumption of Hausdorff and second countable arrow spaces, see Axiom~\ref{ax:Haus_and_2nd_count}. In particular, we stress that in Definition~\ref{def:Lie_pseudo} each $J^k\Gamma$ is understood to be a manifold in the usual sense, i.e.\ Hausdorff and second-countable.
\end{remark}
\begin{definition}\label{def_morphisms_groupoids}
A {\bf morphism of Lie groupoids} $\Sigma_1\tto \BB_1$ and $\Sigma_2\tto \BB_2$ is a morphism of groupoids 
\[
\begin{tikzcd}
\Sigma_1\arrow[d, shift left=-.5ex]\arrow[d, shift right=-.5ex]\arrow[r, "\Phi"]& \Sigma_2\arrow[d, shift left=-.5ex]\arrow[d, shift right=-.5ex]\\
\BB_1\arrow[r, "\phi"]& \BB_2
\end{tikzcd}
\]
where both $\Phi$ and $\phi$ are smooth maps.
\end{definition} 
When $\Sigma\tto \BB$ is a Lie groupoid (see~\cite[Theorem 5.4]{MOERDIJK} for details):
\begin{itemize}
\item $s^{-1}(x)$ is an embedded submanifold of $\Sigma$, for all $x\in \BB$;
\item the isotropy group $\Sigma_x:=s^{-1}(x)\cap t^{-1}(x)$ is an embedded submanifold and a Lie group; 
\item the right action of $\Sigma_x$ on $s^{-1}(x)$ is smooth;
\item the orbit $\mathcal{O}_x$ of $x\in \BB$ is an immersed submanifold of $\BB$;
\item $t: s^{-1}(x)\to \BB$ is a right principal $\Sigma_x$-bundle over $\mathcal{O}_x$ (similarly, replacing $s$ with $t$ -- in that case one gets a left principal bundle).
 \end{itemize}
The following class of groupoids is fundamental when dealing with pseudogroups.
\begin{definition}\label{def:etale_gpd}
A topological groupoid is called an {\bf étale groupoid} when the source map (and consequently the target map) are local homeomorphisms.
\end{definition}
Let $\BB$ be a manifold and let $\Sigma\tto \BB$ be an étale topological groupoid. The arrow space $\Sigma$ is automatically a (possibly non-Hausdorff, non second countable) manifold: an atlas on $\Sigma$ is obtained by pulling back an atlas on the base $\BB$ via the source map. With such smooth structure, $\Sigma\tto \BB$ is a Lie groupoid; in these cases, we will simply use the terminology ``étale groupoids''. Throughout the paper, étale groupoids are denoted by $\mathcal{G}\tto \BB$, while the notation $\G \tto \BB$ is used for arbitrary Lie groupoids.

\begin{definition}\label{def:trans_gpds}
A Lie groupoid $\Sigma\tto \BB$ is called {\bf transitive}\footnote{in \cite[Definition 1.3.2]{MACKENZIE} this condition is also called locally trivial, the name transitive being reserved for what we call weakly transitive.} when the {\bf anchor map}
\[
(s, t): \Sigma\to \BB\times \BB\quad g\mapsto (s(g), t(g))
\]
is a surjective submersion. We call it {\bf weakly transitive} when the anchor is only surjective.
\end{definition} 
Observe that weak transitivity makes sense for any, not necessarily Lie, groupoid $\G\tto \BB$.
Observe also that weak transitivity is equivalent to asking that for each $x\in \BB$, $y\in \BB$ there is some $g\in \Sigma$ such that $s(g)=x$ and $t(g)=y$ -- i.e., $\Sigma$ possesses a single orbit: for each $x\in \BB$, $\mathcal{O}_x=\BB$. If $\Sigma$ is also transitive, $t: s^{-1}(x)\to \BB$ is a principal $\Sigma_x$-bundle. Observe that the smooth structure of the quotient $s^{-1}(x)/\Sigma_x$ -- i.e.\ the smooth structure on $\BB$ {\it as an orbit of $\Sigma$} -- is diffeomorphic to the given smooth structure on $\BB$ thanks to the fact that the anchor map is submersive. In fact, this implies that the restricted target map $t:s^{-1}(x)\to \BB$ is submersive as well.
\begin{remark}[Transitivity and second countability]\label{rmk:transitivity_and_second_countability}
Let $\Sigma\tto \BB$ be a Lie groupoid and let the anchor
\[
(s,t): \Sigma\tto \BB\times \BB
\]
be surjective. It follows that $t|_{s^{-1}(x)}: s^{-1}(x)\to \BB$ is a principal $\Sigma_x$-bundle projection, where the quotient smooth structure on $\BB$ makes it into an {\it immersed submanifold} of $\BB$ with the smooth structure as the base of $\Sigma\tto \BB$ (this can be seen by means of a rather explicit argument, see e.g.\ Theorem $5.4$ in~\cite{MOERDIJK}). 

Assume now that the quotient topology on $\BB$ is strictly finer than the topology as the base of $\Sigma\tto \BB$. Then $\BB$ is locally Euclidean with both topologies; it follows that $\BB$ with the quotient smooth structure is a manifold with strictly lower dimension than $\BB$ with the smooth structure as base of $\Sigma\tto \BB$. This implies that $\BB$ with the quotient topology is not second countable. Consequently, also $s^{-1}(x)$ is not second countable. To sum up we see that, if a Lie groupoid $\Sigma\tto \BB$ has surjective but {\it not submersive} anchor, then its arrow space has to be non-second countable. Hence, if the arrow space $\Sigma$ of a Lie groupoid is second countable, then the anchor map is surjective if and only if it is surjective {\it and submersive}, i.e.\ the groupoid is weakly transitive if and only if it is transitive.
\end{remark}

\begin{remark}[Transitivity of (Lie) pseudogroups]\label{rmk:transitivity_of_pseudo}
Observe that transitivity of a (not necessarily Lie) pseudogroup $\Gamma$ over $\BB$ (see Definition \ref{def:transitive-orbits}) is equivalent to the map
\[
(s, t): J^k\Gamma\to \BB\times \BB\quad j^k_x\varphi \mapsto (x, \varphi(x))
\]
being surjective. 

Consequently, if the (not necessarily Lie) jet groupoid $J^k\Gamma\tto \BB$ of a  pseudogroup $\Gamma$ is weakly transitive, then $\Gamma$ itself is transitive. In fact, for $\Gamma$ to be transitive it is enough for the anchor to be surjective.
%\textcolor{red}{Which claim? If $\Gamma$ is not Lie, $J^k\Gamma$ is not a Lie groupoid, and we have not given a definition of transitivity for groupoids which are not Lie (but maybe we should? Should we mention the difference in names, transitive vs locally trivial -- see Mackenzie?)}

Viceversa, if the pseudogroup $\Gamma$ is transitive, then the anchor of $J^k\Gamma\tto \BB$, for all $k\in \mathbb{N}$, is surjective. 

Let now $\Gamma$ be a {\it Lie} pseudogroup, and recall that we always assume the jet groupoids $J^k\Gamma$, $k\in \mathbb{N}$, of Lie pseudogroups to be manifolds in the usual sense, i.e.\ Hausdorff and second-countable. Combining this with the previous remark, we see that for all $k\geq k_\Gamma$ (cf.\ Definition~\ref{def:Lie_pseudo}), $J^k\Gamma\tto \BB$ is transitive as well.

Compare with Remark~\ref{rmk:transitivity_of_germ_groupoid}.
\end{remark} 

\begin{remark}[Transitivity of germ groupoids]\label{rmk:transitivity_of_germ_groupoid}
Let $\Gamma$ be a pseudogroup over $\BB$. Its associated germ groupoid $\Germ(\Gamma)\tto \BB$ (see Theorem~\ref{thm:Haef_corresp}) is weakly transitive if and only if $\Gamma$ is transitive. On the other hand, if $\Gamma$ is transitive then $\Germ(\Gamma)\tto \BB$ is not necessarily transitive (even if $\Gamma$ is assumed to be a Lie pseudogroup!). In fact, if $\Gamma$ is transitive, the anchor map 
\[
\Germ(\Gamma)\to \BB\times \BB, \quad \germ_x(\varphi) \mapsto (x, \varphi(x))
\]
is surely surjective, but it cannot be submersive unless $\dim(\BB)< 1$, because the $s$-fibres of étale groupoids are necessarily discrete.

Compare with Remarks~\ref{rmk:transitivity_and_second_countability} and~\ref{rmk:transitivity_of_pseudo}\footnote{When comparing with Remark~\ref{rmk:transitivity_and_second_countability} notice that when $ \dim(\BB)\geq 1$ then $\BB$ is non-countable. Consequently, if the anchor of $\Sigma\tto \BB$ is surjective, the $s$-fibres of $\Sigma$ (hence $\Sigma$ itself) are not second countable.}
\end{remark}

\begin{definition}
Let $\G\tto \BB$ be a Lie groupoid. A local section $\sigma:U\subset \BB\to \G$ of the source map is called {\bf bisection} if the composition $t\circ \sigma:U\to \BB$ is a diffeomorphism onto its image.
\end{definition}

\begin{definition}\label{def:effective_gpd}
An étale groupoid $\mathcal{G}\tto \BB$ is called {\bf effective} when, for any two local bisections $\sigma$, $\sigma'$, if $t\circ \sigma=t\circ \sigma'$ then $\sigma=\sigma'$.
\end{definition}

\begin{example}\label{effective_etale_pseudogroups}
Effective étale Lie groupoids are precisely germ groupoids of pseudogroups, see Theorem~\ref{thm:Haef_corresp}. Notice that, given any étale groupoid $\mathcal{G}$, the set
\[
\Gamma_{\mathcal{G}}:=\{t\circ \sigma:\ \sigma\ \text{is a local bisection of }\mathcal{G}\}
\]
defines a pseudogroup. The morphism of groupoids
\[
\mathcal{G}\to \Germ(\Gamma_\mathcal{G}), \quad g \mapsto \germ_x(t\circ \sigma_g),
\] 
where $s(g)=x$ and $\sigma_g$ is a bisection defined around $x$ and such that $\sigma_g(x)=g$, is an isomorphism precisely when $\mathcal{G}$ is effective.
\end{example}

\subsubsection*{Lie algebroids}\label{subsec:Lie-alg}
%This subsection is of more technical nature: we briefly recall the basics concerning Lie algebroids. Even though such an intermezzo might appear less motivated than the previous discussion, the concepts presented below will play an important role in the rest of the paper. 

\begin{definition}\label{def:Lie_alg}
A {\bf Lie algebroid} $(A,\rho,[\cdot ,\cdot ])$ over $\BB$ is a vector bundle $A\to \BB$ equipped with
\begin{itemize}
\item a vector bundle map $\rho:A\to TM$, called {\bf anchor};
\item a Lie bracket $[\cdot ,\cdot ]$ on the space of sections $\Gamma(A)$;
\end{itemize}  
such that the Leibniz identity
\[
[\alpha, f\beta]=f[\alpha, \beta]+L_{\rho(\alpha)}(f)\beta
\]
holds for all $\alpha, \beta\in \Gamma(A)$, $f\in C^\infty(\BB)$. 
\end{definition}
When dealing with a Lie algebroid $(A,\rho,[\cdot ,\cdot ])$ over $\BB$, whenever there is no need to specify anchor and bracket, we use the notation $A\to \BB$.

%Any Lie groupoid comes with an associated infinitesimal object

\begin{definition} 
A {\bf Lie subalgebroid} of a Lie algebroid $(A,\rho,[\cdot ,\cdot ])$ over $\BB$ is a Lie algebroid $(A',\rho',[\cdot ,\cdot ]')$ over $\BB$ such that 
\begin{itemize}
\item $A'\to \BB$ is a vector subbundle of $A\to \BB$;
\item $\rho'$ is the restriction of $\rho$ to $A'$;
\item the inclusion of $A'$ into $A$ makes $(\Gamma(A'),[\cdot ,\cdot ]')$ into a Lie subalgebra of $(\Gamma(A),[\cdot ,\cdot ])$.
\end{itemize}
A subalgebroid $A'\to \BB$ of $A\to \BB$ is called {\bf ideal} if $\Gamma(A')$ is a Lie ideal in $\Gamma(A)$. 
\end{definition}

We conclude by recalling that a {\bf morphism of Lie algebroids} over the same base is simply a vector bundle morphism preserving the anchor and the Lie bracket. For the general notion of morphism (between Lie algebroids over different bases), see e.g.\ in \cite[Definition 4.3.1]{MACKENZIE} and \cite[Section 6.2]{MOERDIJK}.

\

\begin{comment}
We conclude by recalling the notion of {\it morphism of Lie algebroids}. To do so, we recall where that, if $A_1\to \BB_1$ and $A_2\to \BB_2$ are Lie algebroids, then the {\bf direct product algebroid} is the unique Lie algebroid $A_1\times A_2\to \BB_1\times \BB_2$ such that
\begin{itemize}
\item $\rho(\alpha_1,\alpha_2) = \rho_1(\alpha_1)+\rho_2(\alpha_2)\in T\BB_1\oplus T\BB_2$ for all $\alpha_1\in \Gamma(A_1)$, $\alpha_2\in \Gamma(A_2)$;
\item the map
\begin{align*}
\Gamma(A_1)\oplus \Gamma(A_2) &\to A_1\oplus A_2\\
(\alpha_1,\alpha_2)&\to \alpha_1+\alpha_2
\end{align*}
is a Lie algebra morphism.
\end{itemize}
Morphisms of Lie algebroids can be defined in terms of the direct product~\cite{MEINRENKEN}.

\textcolor{red}{In the main file I put the reference \cite[Definition 4.3.1]{MACKENZIE} and \cite[Section 6.2]{MOERDIJK} for the general definition of algebroid morphism -- I guess this discussion on direct product will be probably removed if we don't use them in the Pfaffian sections. The only place where we use algebroid morphisms is Proposition \ref{prp:coefficent-space-is-alg}}

\begin{definition}\label{def:morph-algebroid}
Let $A_1\to \BB_1$ and $A_2\to \BB_2$ be Lie algebroids. A {\bf morphism of Lie algebroids} is a vector bundle map 
\[
\begin{tikzcd}
A_1\arrow[r, "\Phi"]\arrow[d] & A_2 \arrow[d]\\
\BB_1\arrow[r, "\phi"] & \BB_2
\end{tikzcd}
\]
such that 
%\begin{itemize}
%\item $\rho_{A_2}\circ \Phi=\rho_{A_1}$
%\item $[\Phi\circ \alpha, \Phi\circ \beta]_2=[\alpha, \beta]_1$, for all $\alpha, \beta \in \Gamma(A_1)$.
%\end{itemize}
the graph of $(\phi,\Phi)$ is a Lie subalgebroid of $A_1\times A_2\tto \BB_1\times\BB_2$.
\end{definition}

\end{comment}

The Lie algebroid $A:={\rm Lie}(\Sigma)\to \BB$ of a Lie groupoid $\G\tto \BB$ is constructed as follows:
\begin{itemize}
\item the total space $A$ is given by $\ker(ds)|_\BB$ (recall that we can see $\BB$ as an embedded submanifold of $\Sigma$ using the unit (bi)section of $\Sigma\tto \BB$);
\item the anchor map is the restriction $dt|_A: A \to T\BB$;
\item the bracket on sections of $A$ is induced by the bracket of {\it right invariant vector fields} on $\Sigma\tto \BB$.
\end{itemize}

The construction of the Lie algebroid of a Lie groupoid is reminescent to the construction of the Lie algebra of a Lie group, which is indeed a particular case -- any Lie group $G$ is a Lie groupoid over the point and its Lie algebra $\g$ is a Lie algebroid over the point. Part of the classical Lie theory can be extended to Lie algebroids and Lie groupoids -- e.g.~\cite{MARIUSRUI} and references therein. In particular, we make use of the following: 
\begin{proposition}\label{prp:morph-algebroid}
Let $\Sigma_1\tto \BB_1$ and $\Sigma_2 \tto \BB_2$ be Lie groupoids and $A_1\to \BB_1$, $A_2\to \BB_2$ be their Lie algebroids. Let
\[
\begin{tikzcd}
\Sigma_1\arrow[r, "\Phi"]\arrow[d, shift left = 0.75]\arrow[d, shift right = 0.75] &\Sigma_2 \arrow[d, shift left = 0.75]\arrow[d, shift right = 0.75]\\
\BB_1\arrow[r, "\phi"] & \BB_2
\end{tikzcd}
\]
be a Lie groupoid morphism. The tangent map of $\Phi$ induces a vector bundle map
\[
\begin{tikzcd}
A_1\arrow[r, "{\rm Lie}(\Phi)"]\arrow[d] & A_2 \arrow[d]\\
\BB_1\arrow[r, "\phi"] & \BB_2
\end{tikzcd}
\]
which is a morphism of Lie algebroids.
%\begin{itemize}
%\item $\rho_{A_2}\circ {\rm Lie}(\Phi)=d\phi \circ \rho_{A_1}$ where $\rho_{A_i}$ denotes the anchor map of $A_i\to \BB_i$;
%\item if $\alpha_2$, $\beta_2$ are sections of $A_2\to \BB_2$ such that there are sections $\alpha_1$, $\beta_1$ of $A_1\to \BB_1$ satisfying
%\[
%\alpha_2 \circ \phi = {\rm Lie}(\Phi)\circ \alpha_1\quad \beta_2 \circ \phi = {\rm Lie}(\Phi)\circ \beta_1
%\]
%then 
%\[
%{\rm Lie}(\phi)([\alpha_1, \beta_1]_1)=[\alpha_2, \beta_2]_2
%\]
%where $[\ ,\ ]_i$ is the Lie bracket on $\Gamma(A_i)$.
%\end{itemize}
Furthermore, if $\Sigma_3\tto \BB_3$ is a Lie groupoid and there is a morphism $\Psi$ from $\Sigma_2\tto \BB_2$ to $\Sigma_3\tto \BB_3$ lifting $\psi:\BB_2\to \BB_3$, then 
\[
{\rm Lie}(\Psi\circ \Phi)={\rm Lie}(\Psi)\circ {\rm Lie}(\Phi).
\]
\end{proposition}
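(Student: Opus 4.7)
My plan is to construct ${\rm Lie}(\Phi)$ directly from the tangent map of $\Phi$, verify the two Lie algebroid morphism conditions (anchor compatibility and bracket compatibility), and then deduce functoriality from the chain rule. The first step is to exploit that $\Phi$ is a groupoid morphism: by definition $s_2 \circ \Phi = \phi \circ s_1$, so differentiating yields $ds_2 \circ d\Phi = d\phi \circ ds_1$, which shows $d\Phi$ sends $\ker(ds_1) \subset T\Sigma_1$ into $\ker(ds_2) \subset T\Sigma_2$. Since $\Phi \circ u_1 = u_2 \circ \phi$, restricting along the unit sections produces a well-defined vector bundle map ${\rm Lie}(\Phi) := d\Phi|_{A_1}: A_1 \to A_2$ covering $\phi: \BB_1 \to \BB_2$, which is fibrewise linear because $d\Phi$ is.

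Next, I would check compatibility with the anchors. Since the anchor $\rho_i$ is defined as $dt_i|_{A_i}$ and $\Phi$ commutes with the targets ($t_2 \circ \Phi = \phi \circ t_1$), the chain rule gives
\[
\rho_2 \circ {\rm Lie}(\Phi) = dt_2 \circ d\Phi|_{A_1} = d(t_2 \circ \Phi)|_{A_1} = d(\phi \circ t_1)|_{A_1} = d\phi \circ \rho_1,
\]
so the anchor condition holds on the nose.

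The main obstacle is the bracket compatibility, since $\Phi$ covers a nontrivial map $\phi$ and there is no direct comparison of brackets of sections of $A_1$ and $A_2$. The cleanest way is to go through right-invariant vector fields: for $\alpha \in \Gamma(A_1)$ the corresponding right-invariant vector field $\vec{\alpha}$ on $\Sigma_1$ satisfies $\vec{\alpha}_g = dR_g(\alpha_{t(g)})$, and analogously for $A_2$. I would show that if $\alpha \in \Gamma(A_1)$ and $\beta \in \Gamma(A_2)$ are $\Phi$-related in the sense that ${\rm Lie}(\Phi) \circ \alpha = \beta \circ \phi$, then $\vec{\alpha}$ and $\vec{\beta}$ are $\Phi$-related as vector fields on $\Sigma_1, \Sigma_2$; this uses that $\Phi$ respects multiplication, hence intertwines the right translations in the appropriate way. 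Since $\Phi$-relatedness of vector fields is preserved by the Lie bracket, $[\vec{\alpha}, \vec{\alpha}']$ and $[\vec{\beta}, \vec{\beta}']$ are also $\Phi$-related, which is precisely the condition for ${\rm Lie}(\Phi)$ to be a Lie algebroid morphism over $\phi$ in the sense of~\cite[Definition 4.3.1]{MACKENZIE}. Equivalently, one can phrase this by showing that the graph of $({\rm Lie}(\Phi), \phi)$ is a Lie subalgebroid of $A_1 \times A_2 \to \BB_1 \times \BB_2$, which is how the general notion is sometimes packaged.

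Finally, functoriality ${\rm Lie}(\Psi \circ \Phi) = {\rm Lie}(\Psi) \circ {\rm Lie}(\Phi)$ is immediate from the chain rule $d(\Psi \circ \Phi) = d\Psi \circ d\Phi$, applied after restriction to $A_1$. I expect the only delicate point in the whole argument to be keeping the bracket condition conceptually clean given that the base map $\phi$ is not necessarily a diffeomorphism, so sections of $A_1$ do not pull back to sections of $A_2$; using $\Phi$-relatedness of right-invariant vector fields bypasses this issue entirely.
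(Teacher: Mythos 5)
The paper states this proposition as recalled background (it is part of Appendix A, with pointers to \cite[Definition 4.3.1]{MACKENZIE} and \cite[Section 6.2]{MOERDIJK}) and gives no proof of its own, so there is nothing to compare line by line; your argument is the standard one and is essentially correct. The construction of ${\rm Lie}(\Phi)=d\Phi|_{A_1}$ from $s_2\circ\Phi=\phi\circ s_1$ and $\Phi\circ u_1=u_2\circ\phi$, the anchor compatibility from $t_2\circ\Phi=\phi\circ t_1$ (matching the paper's convention $\rho=dt|_A$), and the functoriality from the chain rule are all exactly right. The computation behind your key step also works: $\Phi\circ R_g=R_{\Phi(g)}\circ\Phi$ on $s^{-1}(t(g))$ gives $d\Phi(\vec{\alpha}_g)=dR_{\Phi(g)}\bigl({\rm Lie}(\Phi)(\alpha_{t(g)})\bigr)=\vec{\beta}_{\Phi(g)}$ whenever ${\rm Lie}(\Phi)\circ\alpha=\beta\circ\phi$, so related sections have related right-invariant fields and hence related brackets. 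The one point to be careful about is that this relatedness statement alone does not literally verify the bracket condition of \cite[Definition 4.3.1]{MACKENZIE}, which is phrased for \emph{arbitrary} sections $\alpha\in\Gamma(A_1)$ via a decomposition ${\rm Lie}(\Phi)\circ\alpha=\sum f_i(\beta_i\circ\phi)$ in $\phi^*A_2$; when $\phi$ is not a diffeomorphism a given $\alpha$ need not be related to any section of $A_2$. Your parenthetical fallback is the right fix: ${\rm Gr}(\Phi)\subset\Sigma_1\times\Sigma_2$ is a Lie subgroupoid over ${\rm Gr}(\phi)$, its Lie algebroid is ${\rm Gr}({\rm Lie}(\Phi))$, and the graph being a subalgebroid of $A_1\times A_2$ is equivalent to ${\rm Lie}(\Phi)$ being a morphism over $\phi$; alternatively one expands in $\phi^*A_2$ and uses the Leibniz rule together with the relatedness you established. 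Either way the proof closes, so I would only ask you to make explicit which of these two routes you are taking for the bracket condition rather than treating them as interchangeable shorthand.
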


Throughout the paper, whenever the context is clear, the notation $A\to \BB$ is used in place of ${\rm Lie}(\Sigma)\to \BB$ to denote the Lie algebroid of a groupoid $\Sigma\tto \BB$.

\subsubsection*{Principal bundles}

\begin{definition}\label{def_groupoid_action}
Let $\Sigma\tto \BB$ be a Lie groupoid. A smooth {\bf (left) action} of $\Sigma$ on a manifold $P$ along a map $\mu:P\to \BB$, or, more briefly, a {\bf (left) $\Sigma$-space} $\mu:P\to \BB$, is a smooth map 
\[
m_P: \G\tensor[_s]{\times}{_\mu} P\to P
\]
such that 
\begin{itemize}
\item $m_P(g_1, m_P(g_2, p))=m_P(g_1g_2, p)$, for all $g_1, g_2\in \Sigma$, $s(g_1)=t(g_2)$, $p\in P$, $\mu(p)=s(g_2)$;
\item  $m_P(1_x, p)=p$ for all $p\in P$ such that $\mu(p)=x\in \BB$, where $1_x$ denotes the identity in $\Sigma$ over $x$.
\end{itemize}
We will denote $m_P(g, p)$ by $g\cdot p$. The map $\mu$ is also called the {\bf moment map} of the action.
\end{definition}
The notion of right action/right $\Sigma$-space is defined analogously.

When the moment map $\mu:E\to \BB$ is a vector bundle projection and each arrow $g\in \Sigma$ acts as a linear map (hence an isomorphism) between fibers, then E is called a (left) {\bf representation} of $\Sigma\tto \BB$. 

Given a $\Sigma$-space $\mu:P\to \BB$, the {\bf orbit space} of the action of $\Sigma$ is the quotient $P/\Sigma$ with respect to the equivalence relation
\[
p_1\sim p_2 \quad \text{if and only if}\quad p_2=g\cdot p_1 \text{ for some } g\in \Sigma, s(g)=\mu(p_1).
\]

\begin{definition}\label{def:princ_gpd_bundle}
Let $\Sigma\tto \BB$ be a Lie groupoid and $M$ be a manifold. A {\bf principal $\G$-bundle over $M$} is a $\Sigma$-space $\mu:P\to \BB$ together with a surjective submersion $\pi:P\to M$ such that 
\begin{itemize}
\item $\pi$ is $\G$-invariant: $\pi(g\cdot p)=\pi(p)$ for all $g\in \G$, $p\in P$ such that $s(g)=\mu(p)$;
\item the action of $\G$ is transitive on $\pi$-fibres: if $p_1$, $p_2$ are points in $P$ and $\pi(p_1) = \pi(p_2)$, then $p_2 =g\cdot p_1$ for some $g\in \G$ such that $s(g) = \mu(p_1)$;
\item the action is free: for all $p\in P$, $g\in \Sigma$ such that $s(g)=\mu(p)$, if $g\cdot p=p$ then $g=1_{\mu(p)}$;
\item the action is proper: the map
\[
\Sigma\tensor[_s]{\times}{_\mu}P \to P\times P, \quad (g, p) \mapsto (g\cdot p, p)
\]
is a proper map.
%together with a surjective submersion $\pi:P\to M$ such that
%\begin{itemize}
%\item $\pi$ is $\Sigma$-invariant, that is 
%\[
%\pi(g\cdot p)=\pi(p), \quad p\in P,\ g\in \Sigma,\ \mu(p)=s(g);
%\]
%\item the map $P/\Sigma\to M$ induced by $\pi$ is a homeomorphism.
\end{itemize}
\end{definition}
The orbit space of a principal $\G$-bundle over $M$ possesses a canonical manifold structure making the quotient projection into a surjective submersion; the map $P/\G\to M$ induced by $\pi$ is a diffeomorphism when $P/\G$ is given such a smooth structure.

\begin{remark}\label{infinitesimal_action}
Given an action $m_P$ of a Lie groupoid $\Sigma \tto \BB$ on $\mu:P\to \BB$, one has an induced {\bf infinitesimal action}, given by the map
\[
a: t^*A\to \ker(d\pi)\subset TP
\]
sending $\alpha_x\in A_{t(g)}=\ker_{t(g)}(ds)$ to $dm_P(\alpha_x, 0_p)\in \ker_p(d\pi)\subset T_pP$. When $P$ is a principal $\Sigma$-bundle over $M$, the infinitesimal action induces an isomorphism 
\[
\mu^*A \to \ker (d\pi), \quad \alpha_{\mu(p)}\mapsto dm_P(\alpha_{\mu(p)}, 0_p). \qedhere
\]
\end{remark}

\begin{definition}
A {\bf morphism} of principal $\Sigma$-bundles $\mu_1:P_1\to \BB$, $\mu_2: P_2\to \BB$ over the same base $M$ is a smooth map $F:P_1\to P_2$ commuting with the action of $\Sigma$, i.e.\ 
\begin{itemize}
\item $\mu_2\circ F=\mu_1$;
\item $F(g\cdot p)=g\cdot F(p)$ for all $g\in \Sigma$, $p\in P$, $\mu(p)=s(g)$.
\end{itemize}
\end{definition}
Throughout the paper, we work under the following axiom.
\begin{axiom}{MM}\label{axiom_moment_map}
Given a Lie groupoid $\Sigma\tto \BB$ and a $\Sigma$-principal bundle $\mu:P\to \BB$ over $M$, the moment map $\mu$ is always assumed to be a surjective submersion.
\end{axiom}
In particular, if one is dealing with an étale groupoid $\mathcal{G}\tto \BB$ and one has $\dim(\BB) = \dim(M)$, the above axiom is equivalent to the request that the moment map is étale.
\subsubsection*{Morita equivalence and transverse geometry}
The notion of {\it Morita equivalence} between Lie groupoids, which appeared first in \cite[Definition 2.1]{XUMORITAEQ} in the context of Poisson geometry, captures the idea of ``transverse geometry'' of a Lie groupoid $\Sigma\tto \BB$ (see Proposition \ref{prp:Transverse-geometry} below and \cites{JOAO, MATIAS, MOERDIJK} for more details on this point of view).
%Lie groupoids $\Sigma\tto \BB$ can be understood as a way to ``desingularise'' the singular foliation on $\BB$ given by their orbits -- or the leaf space of such a foliation.
\begin{definition}\label{def:classical-ME}
A {\bf principal bibundle} between $\G_1\tto \BB_1$ and $\G_2\tto \BB_2$ is a manifold $P$ together with two maps $\mu_1:P\to \BB_1$ and $\mu_2:P\to \BB_2$ such that
\begin{itemize}
\item $\mu_1:P\to \BB_1$ is a left $\G_1$-space and a principal $\Sigma_1$-bundle over $\BB_2$, with projection $\mu_2:P\to \BB_2$;
\item $\mu_2:P\to \BB_2$ is a right $\G_2$-space and a principal $\Sigma_2$-bundle over $\BB_1$, with projection $\mu_1:P\to \BB_1$;
\item the two actions on $P$ commute.
\end{itemize}
Two Lie groupoids $\Sigma_1\tto \BB_1$ and $\Sigma_2\tto \BB_2$ are called {\bf Morita equivalent} if there exists a principal bibundle between them.
\end{definition}
We will denote Morita equivalences/principal bibundles by $\BB_1\overset{\mu_1}{\leftarrow} P \overset{\mu_2}{\rightarrow} \BB_2$ or, more extendedly, via {\bf butterfly diagrams}:
\[
\xymatrix{
\G_1 \ar@<0.25pc>[dr] \ar@<-0.25pc>[dr]  & \ar@(dl, ul) &  P \ar[dl]^{\mu_1}\ar[dr]^{\mu_2} & \ar@(dr, ur) & \G_2 \ar@<0.25pc>[dl] \ar@<-0.25pc>[dl] \\
&\BB_1  & &  \BB_2 &}.
\]
\begin{example}\label{ex:ME_transitive}
A transitive Lie groupoid is Morita equivalent to its isotropy group at any point. In fact, if $\G\tto \BB$ is transitive, then the $s$-fibre $s^{-1}(x)$ at any point is a principal $\G$-bundle over $\{x\}$ (along the restriction of the target map), but also a principal $\G_x$-bundle over $M$ along the (constant) restriction of the source map. See e.g.\ \cite[Proposition 5.14]{MOERDIJK}.
\end{example}

\begin{remark}\label{Morita_maps}
Morita equivalence between Lie groupoids can also be alternatively defined by means of Morita maps \cite{MATIAS}. 

A {\bf Morita map}\footnote{this term is not universal in the literature: it is known e.g.\ as {\it essential equivalence} in \cite{JANEZ} and {\it weak equivalence} in \cite{MOERDIJK}} $\Phi: \G_1 \to \G_2$ covering $\phi: \BB_1 \to \BB_2$ is a Lie groupoid morphism (Definition \ref{def_morphisms_groupoids}) which is also
 \begin{itemize}
  \item {\it fully faithful}: $\G_1$ is a pullback of $(\phi,\phi): \BB_1 \to \BB_2 \times \BB_2$ and $(s,t): \G_2 \to \BB_2 \times \BB_2$ in the smooth category, i.e.\ the following map is a diffeomorphism
  \[
   \G_1 \to (\BB_1 \times \BB_1) \times_{(\phi,\phi), (s,t)} \G_2, \quad g \mapsto (s(g), t(g), \Phi(g));
  \]
  \item {\it essentially surjective}: the smooth map $t \circ \pr_2: \G_1 \times_{\BB_2} \BB_1 \to \BB_2, (g,x) \mapsto t(g)$ is a surjective submersion.
 \end{itemize}
 In particular, a Morita map is fully faithful and essentially surjective as a functor between categories, i.e.\ it induces an equivalence of categories between $\Sigma_1$ and $\Sigma_2$. %Notice however that not every fully faithful and essentially surjective smooth functor between Lie groupoids is a Morita map (for the same principle that not every smooth bijection between manifolds is a diffeomorphism).

For the equivalence with the bibundle definition see e.g.\ \cite[Theorem 4.3.6]{MATIAS}: in particular, two Lie groupoids $\Sigma_1$ and $\Sigma_2$ are Morita equivalent if and only if there exist a third groupoid $\Sigma$ and two Morita maps $\Sigma \to \Sigma_1$ and $\Sigma \to \Sigma_2$.
\end{remark}

The following well known proposition (e.g.\ \cite[Lemma 1.28]{JOAO}) makes precise the claim that Morita equivalence captures the ``transverse geometry'' of a Lie groupoid. Recall that the {\bf orbit space} of a Lie groupoid $\Sigma\tto \BB$ is the leaf space of the singular foliation on $\BB$ by orbits of $\Sigma$ -- in other words, it is the quotient $\BB/\G$ of $\BB$ with respect to the equivalence relation identifying two points $x, y\in \BB$ when there is an arrow $g\in \Sigma$ such that $s(g)=x$, $t(g)=y$.
\begin{proposition}\label{prp:Transverse-geometry}
Any Morita equivalence $\BB_1\lto P\to \BB_2$ between two Lie groupoids $\G_1\tto \BB_1$ and $\G_2\tto \BB_2$ induces:
\begin{itemize}
\item a homeomorphism between the orbit spaces
\[
\Phi_P: \BB_1/\G_1\overset{\cong}{\to} \BB_2/\G_2; 
\]
\item for each $p\in P$, an isomorphism between the isotropy groups at points $x_1\in \BB_1$ and $x_2\in \BB_2$ such that $\mu_1(p)=x_1$ and $\mu_2(p)=x_2$,
\[
\varphi_p: (\G_1)_{x_1} \overset{\cong}{\to} (\G_2)_{x_2},\quad g_1\mapsto g_2:=\varphi_p(g_1),
\]
where $g_2$ is the (unique) element of $(\Sigma)_{x_2}$ such that $g_1\cdot p=p\cdot g_2$;
\item an isomorphism of the normal representations at points whose orbits are related by $\Phi_P$ (see below).
\end{itemize}
\end{proposition}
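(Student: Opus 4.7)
The plan is to exploit the symmetry of the bibundle $P$ together with the defining principal bundle structures of $\mu_1$ and $\mu_2$. For the orbit space homeomorphism, I would define $\Phi_P([x_1]) := [\mu_2(p)]$ for any $p \in \mu_1^{-1}(x_1)$. Well-definedness comes from the fact that $\mu_2 : P \to \BB_2$ is constant along the $\Sigma_2$-action (which is principal with moment map $\mu_1$ as the orbit projection), hence independent of the choice of $p \in \mu_1^{-1}(x_1)$, and equivariant for the $\Sigma_1$-action (so independent of the representative $x_1$). Bijectivity follows by running the analogous construction from $\BB_2$ to $\BB_1$, and continuity in both directions is automatic because both $\mu_1$ and $\mu_2$ are surjective submersions, hence topological quotient maps, so $\Phi_P$ is precisely the map of quotients induced by the identity on $P$.

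For the isotropy isomorphism at $p \in P$ with $\mu_1(p)=x_1$ and $\mu_2(p)=x_2$, I would observe that for $g_1 \in (\Sigma_1)_{x_1}$ the point $g_1 \cdot p$ lies again in $\mu_1^{-1}(x_1) \cap \mu_2^{-1}(x_2)$: the first because $\mu_1(g_1 \cdot p) = t(g_1) = x_1$, the second because $\mu_2$ is $\Sigma_1$-invariant. Since $\mu_1$ realises $P$ as a principal $\Sigma_2$-bundle, there is a unique $g_2 \in \Sigma_2$ with $p \cdot g_2 = g_1 \cdot p$, and having both its source and target equal to $x_2$ forces $g_2 \in (\Sigma_2)_{x_2}$. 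Set $\varphi_p(g_1) := g_2$. Multiplicativity follows from the computation $p \cdot \varphi_p(g_1 h_1) = (g_1 h_1) \cdot p = g_1 \cdot (p \cdot \varphi_p(h_1)) = (p \cdot \varphi_p(g_1))\cdot \varphi_p(h_1)$ combined with uniqueness, and the symmetric construction yields a two-sided inverse. Smoothness comes from the smoothness of the division maps attached to the two principal structures, which make $\varphi_p$ the composition of smooth maps.

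For the normal representations, recall that $N_{x_i} := T_{x_i}\BB_i / \rho(A_{x_i}|_{x_i})$ is the fibre at $x_i$ of the normal bundle to the $\Sigma_i$-orbit, and carries the canonical linearisation of the $\Sigma_i$-action. I would define the candidate isomorphism $N_{x_1} \to N_{x_2}$ as follows: given $[v] \in N_{x_1}$, pick any lift $\tilde{v} \in T_p P$ with $d\mu_1(\tilde{v}) = v$, and send $[v]$ to $[d\mu_2(\tilde{v})]$. Well-definedness requires that a lift exists (since $\mu_1$ is a submersion) and that changing it by an element of $\ker(d\mu_1)_p$ does not change the class of $d\mu_2(\tilde{v})$ modulo the orbit direction at $x_2$; the first is clear, and for the second one uses Remark~\ref{infinitesimal_action}: the infinitesimal action identifies $\ker(d\mu_1)_p$ with the image of the infinitesimal $\Sigma_2$-action, whose projection under $d\mu_2$ is exactly the orbit tangent space at $x_2$. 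Changing the base point $x_1$ by a $\Sigma_1$-orbit corresponds to changing $p$ along the commuting actions, so the map intertwines $\varphi_p$ and is an isomorphism of representations.

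The main obstacle, and the technically subtlest point, will be the well-definedness and equivariance in part (iii): one needs to keep track of two principal structures simultaneously and verify that the natural candidate map $[d\mu_2 \circ (d\mu_1)^{-1}]$ really descends to normal bundles and intertwines the isotropy actions under $\varphi_p$. The cleanest way is to perform this at the level of the action groupoids $\Sigma_1 \ltimes P$ and $P \rtimes \Sigma_2$, noting that both are Morita equivalent to both $\Sigma_1$ and $\Sigma_2$ via the respective projections, which reduces (iii) to checking equivariance of a single linear map $T_p P \to T_p P / (\ker d\mu_1 + \ker d\mu_2)$ under the diagonal action, a computation that unfolds directly from the commutativity of the two actions on $P$.
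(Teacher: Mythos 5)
The paper does not actually prove this proposition: it is stated as a classical fact with a pointer to the literature (\cite[Lemma 1.28]{JOAO}), so there is no internal proof to compare against. Your argument is the standard one and is correct in substance: the orbit-space homeomorphism as the map induced by $\mathrm{id}_P$ on the common double quotient, the isotropy isomorphism via the division map of the principal $\Sigma_2$-action, and the normal-space isomorphism via the identification of both $\nu_{x_1}$ and $\nu_{x_2}$ with $T_pP/(\ker d\mu_1 + \ker d\mu_2)$. Two small points deserve correction. First, in part (i) you write that ``$\mu_2$ is constant along the $\Sigma_2$-action''; it is not --- $\mu_2$ is the \emph{moment} map of that action and moves by source/target along orbits. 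What is true (and what you need) is that $\mu_1$ is constant along the $\Sigma_2$-action, while $\mu_2$ sends each $\Sigma_2$-orbit in $P$ (i.e.\ each $\mu_1$-fibre) into a single $\Sigma_2$-orbit in $\BB_2$, so the class $[\mu_2(p)]$ is independent of $p\in\mu_1^{-1}(x_1)$. Second, in part (iii) you verify independence of the lift $\tilde v$ (using $d\mu_2(\ker(d\mu_1)_p)=T_{x_2}\mathcal{O}_{x_2}$) but you should also record the symmetric identity $d\mu_1(\ker(d\mu_2)_p)=T_{x_1}\mathcal{O}_{x_1}$, which is what guarantees that the map descends from $T_{x_1}\BB_1$ to the normal space $\nu_{x_1}$ in the first place; it follows from the fact that the $\Sigma_1$-orbit of $p$ is exactly the $\mu_2$-fibre and that $\mu_1$ restricted to it submerges onto $\mathcal{O}_{x_1}$. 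With those repairs your outline is a complete and standard proof.
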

Let us comment on the last point. A representation of a groupoid $\G\tto \BB$ is a vector bundle $E\to \BB$ together with an action of $\G$ along $E\to \BB$ itself, such that each arrow acts by a linear isomorphism. If $\mathcal{O}_x$ is the orbit of $x \in \BB$, the restriction $\G|_{\mathcal{O}_x} \tto \mathcal{O}_x$ is a groupoid admitting a representation on the normal bundle $\nu_{\mathcal{O}_x}$ to the orbit. An arrow $g\in \G|_{\mathcal{O}_x}$, with $s(g)=y\in \mathcal{O}_x$, acts by
\[
g\cdot [v_y]:=[dt (v^s_g)],
\]
where $v^s_g \in T_g\G$ is any $s$-lift of $v_y \in T_y\BB$. This representation of $\Sigma|_{\mathcal{O}_x}$ on $\nu_{\mathcal{O}_x}$ is called {\bf normal representation} of $\Sigma$ at $x$.
By restricting it, one gets a representation of the isotropy group $\G_x$ on the normal space $\nu_x := (\nu_{\mathcal{O}_x})_x$; this latter representation is called {\bf normal representation} as well.

\begin{remark}[Normal representation of \`etale groupoids]
Observe that, when $\mathcal{G}\tto \BB$ is an étale Lie groupoid, %the normal representation can be reconstructed from a 
there is representation of $\mathcal{G}$ on $T\BB$, because $s$ and $t$ are local diffeomorphisms. In fact, one defines
\[
g\cdot v_y:= dt (v^s_g), \quad g\in \mathcal{G},\ s(g)=y,\ v_y\in T_y\BB
\]
where now $v^s_g$ is {\it the} $s$-lift of $v_y$ at $g$ (which exists and is unique because $s$ is a local diffeomorphism).

Such a representation of $\mathcal{G}$ on $T\BB$ can be used to reconstruct the normal representation of $\mathcal{G}|_{\mathcal{O}_x}$ on $\nu_{\mathcal{O}_x}$ for all $x\in \BB$. Given $x\in \BB$, the ``étale'' representation above induces a representation of $\mathcal{G}|_{\mathcal{O}_x}$ on $i^*T\BB$, where $i:\mathcal{O}_x\hookrightarrow \BB$ is the inclusion. Let now $[v_y]\in \nu_{\mathcal{O}_x}$, $y\in \mathcal{O}_x$, $v_y\in T_y\BB$, and $v_y+w_y\in T_y\BB$, $w_y\in T_y\mathcal{O}_x\hookrightarrow T_y\BB$ be any representative of $[v_y]\in \nu_{\mathcal{O}_x}$. It is straightforward to check that, for all $g\in \mathcal{G}$ with $s(g)=y$, 
\[
g\cdot w_y \in T_{t(g)}\mathcal{O}_x.
\]
Consequently the quotient projection $i^*T\BB\to \nu_{\mathcal{O}_x}$ is a map of $\mathcal{G}|_{\mathcal{O}_x}$-representations.
\end{remark}

It turns out that any principal $\G$-bundle $P\to M$ can be ``completed'' and made into a bibundle, in the sense explained below. Let $\G\tto \BB$ be a Lie groupoid and $\pi: P\to M$ be a principal $\G$-bundle along $\mu:P\to \BB$, recalling that we always assume its moment map to be surjective and submersive (Axiom \ref{axiom_moment_map}). The {\bf gauge groupoid} $\Gaug(P)$ of $P$ is the Lie groupoid such that
\begin{itemize}
\item the space of arrows is the quotient of $P\times P$ by the diagonal action of $\G$;
\item  the source and target projection are induced by the first and second projections to $P$;
\item the multiplication is induced by the multiplication
\[
(P\times P)\tensor[_{\pr_1}]{\times}{_{\pr_2}}(P\times P)\to P\times P,\quad ((p_1, p_2), (p_2, p_3)) \mapsto (p_1, p_3)
\]
on the pair groupoid $P\times P\tto P$.
 \end{itemize}
 \begin{example}\label{ex:Gauge_groupoid}
In the particular case when $\G=G$ is a Lie group acting principally on $P\to M$, one discovers the classical {\bf gauge groupoid} $(P\times P)/G\tto M$. It is always transitive over $M$, its isotropy group at any point is isomorphic to $G$ and its $s$-fibre is isomorphic to $P$. Then Example~\ref{ex:ME_transitive} can be reformulated as follows: any transitive Lie groupoid is isomorphic to the gauge groupoid of its $s$-fibre acted from the right by the isotropy group (Proposition 5.14 in~\cite{MOERDIJK}). By making use of this fact, one can often rephrase constructions in terms of transitive Lie groupoids into more familiar constructions in terms of ordinary principal (group) bundles.
\end{example}

\begin{proposition}\label{prop_gauge_construction}
Let $\pi: P\to M$ be a principal $\G$-bundle along $\mu:P\to \BB$. Then $\mu:P \to \BB$ is a principal $\Gaug(P)$-bundle along $\pi:P\to M$ and the actions of $\Sigma$ and $\Gaug(P)$ commute. In other words, $P$ is a principal bibundle between $\Sigma\tto \BB$ and $\Gaug(P)\tto M$.
\end{proposition}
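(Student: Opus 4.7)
The key ingredient is the \emph{division map} of the principal $\Sigma$-bundle $\pi:P\to M$. Since the $\Sigma$-action is free and transitive on $\pi$-fibres, for any pair $(p,q)\in P\times_M P$ there exists a unique arrow $\delta(p,q)\in\Sigma$ with $s(\delta(p,q))=\mu(q)$, $t(\delta(p,q))=\mu(p)$ and $\delta(p,q)\cdot q = p$. The map $\delta:P\times_M P\to \Sigma$ is smooth; this is the standard fact that in a principal groupoid bundle the division map is smooth, which follows from freeness and properness together with Axiom~\ref{axiom_moment_map}.

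Using $\delta$, I will define the right action of $\Gaug(P)\tto M$ on $\mu:P\to\BB$ along $\pi$ by the formula
\[
p\cdot [q_1,q_2] := \delta(p,q_1)\cdot q_2,
\]
whenever $\pi(p) = \pi(q_1)$, i.e.\ $s([q_1,q_2]) = \pi(p)$. The first thing to check is that this is well defined on equivalence classes: if $[q_1,q_2]=[h\cdot q_1,h\cdot q_2]$ for some $h\in\Sigma$ (with compatible moments), then $\delta(p,h\cdot q_1)=\delta(p,q_1)h^{-1}$, so $\delta(p,h\cdot q_1)\cdot (h\cdot q_2) = \delta(p,q_1)\cdot q_2$. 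Smoothness of the action follows from smoothness of $\delta$ and of $m_P$. The axioms (identity and associativity) are immediate from the groupoid structure on $\Gaug(P)$ and the uniqueness property of $\delta$; e.g.\ the identity $[p,p]$ acts trivially, and $(p\cdot[q_1,q_2])\cdot[q_2,q_3] = \delta(p,q_1)\cdot q_3 = p\cdot[q_1,q_3]$.

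Next I will verify the compatibility with $\pi$ and the principality. By construction $\pi(p\cdot[q_1,q_2])=\pi(\delta(p,q_1)\cdot q_2)=\pi(q_2)=t([q_1,q_2])$, but we also need $\mu$-invariance: $\mu(p\cdot[q_1,q_2])=t(\delta(p,q_1))=\mu(p)$, so the moment map of the $\Gaug(P)$-action is $\pi:P\to M$ (as required for a right action with $\mu$ as invariant projection). Transitivity of the $\Gaug(P)$-action on $\mu$-fibres: if $\mu(p_1)=\mu(p_2)$, then $[p_1,p_2]\in\Gaug(P)$ sends $p_1$ to $p_2$. Freeness: if $p\cdot[q_1,q_2]=p$, then $\delta(p,q_1)\cdot q_2 = p = \delta(p,q_1)\cdot q_1$, so by freeness of the $\Sigma$-action $q_1=q_2$, hence $[q_1,q_2]$ is a unit. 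Properness follows from the properness of the $\Sigma$-action, since the $\Gaug(P)$-action factors through a quotient of the proper map associated with the $\Sigma$-action on $P\times_M P$.

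Finally, the two actions commute: for $g\in\Sigma$ and $[q_1,q_2]\in\Gaug(P)$ with $s(g)=\mu(p)$ and $\pi(p)=\pi(q_1)$, one has $\delta(g\cdot p,q_1)=g\cdot\delta(p,q_1)$, so
\[
(g\cdot p)\cdot[q_1,q_2] = g\delta(p,q_1)\cdot q_2 = g\cdot(p\cdot[q_1,q_2]).
\]
The main potential obstacle is justifying the smoothness of $\delta$ and of the induced action; this is where Axiom~\ref{axiom_moment_map} and properness of the $\Sigma$-action are essential, but these are standard consequences of the principal bundle hypotheses.
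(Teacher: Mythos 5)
The paper does not actually prove Proposition~\ref{prop_gauge_construction}: it is stated in the appendix as classical background (see e.g.\ \cite{MOERDIJK}), so there is no in-paper argument to compare yours against. Your division-map proof is the standard one and is essentially correct: defining $p\cdot[q_1,q_2]=\delta(p,q_1)\cdot q_2$, checking well-definedness on $\Sigma$-orbits, $\mu$-invariance, transitivity and freeness along $\mu$-fibres, and commutation with the left $\Sigma$-action all go through exactly as you write them (note only that $[q_1,q_2]$ must be a class in $(P\tensor[_\mu]{\times}{_\mu}P)/\G$, which is what guarantees $\mu(q_1)=\mu(q_2)$ and hence that $\delta(p,q_1)\cdot q_2$ is defined). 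Two points could be tightened. First, your ``freeness'' step really uses that a single arrow $g=\delta(p,q_1)$ acts injectively (compose with $g^{-1}$), not freeness of the whole action -- harmless, but worth phrasing correctly. Second, the properness claim is the one genuinely hand-wavy step: the clean argument is that the map $\Gaug(P)\tensor[_s]{\times}{_\pi}P\to P\times P$, $([q_1,q_2],p)\mapsto(p\cdot[q_1,q_2],p)$, is -- by the freeness and transitivity you just established, together with smoothness of $\delta$ -- a homeomorphism onto $P\tensor[_\mu]{\times}{_\mu}P$, which is closed in $P\times P$ because $\BB$ is Hausdorff; a homeomorphism onto a closed subset is proper. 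Smoothness of $\delta$ itself is where Axiom~\ref{axiom_moment_map} and properness of the original action enter, via the fact that $\G\tensor[_s]{\times}{_\mu}P\to P\times_M P$, $(g,p)\mapsto(g\cdot p,p)$, is a diffeomorphism; you correctly flag this as the standard ingredient.
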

In our setting, the gauge construction is in fact equivalent to the notion of Morita equivalence:
\begin{theorem}\label{thm:gauge_eq_morita}
Two Lie groupoids $\Sigma_1\tto \BB_1$ and $\Sigma_2\tto \BB_2$ are Morita equivalent if and only if there exists a principal $\Sigma_1$-bundle $P$ such that $\Sigma_2\cong \Gaug(P)$.
\end{theorem}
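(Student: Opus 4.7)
The plan is to prove the two directions separately, with the forward (``if'') direction being essentially immediate from Proposition~\ref{prop_gauge_construction}, while the reverse (``only if'') direction requires constructing an explicit isomorphism $\Sigma_2 \cong \Gaug(P)$ from the bibundle structure.

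For the ``if'' direction, assume $\Sigma_2 \cong \Gaug(P)$ for some principal $\Sigma_1$-bundle $\pi: P \to M$ along $\mu: P \to \BB_1$. By Proposition~\ref{prop_gauge_construction}, $P$ is a principal bibundle between $\Sigma_1$ and $\Gaug(P)$; transporting the right $\Gaug(P)$-action along the given isomorphism makes $P$ into a principal bibundle between $\Sigma_1$ and $\Sigma_2$, which by Definition~\ref{def:classical-ME} is exactly a Morita equivalence.

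For the ``only if'' direction, suppose we are given a bibundle $\BB_1 \overset{\mu_1}{\lto} P \overset{\mu_2}{\to} \BB_2$ between $\Sigma_1$ and $\Sigma_2$. Then $P$, seen as a principal $\Sigma_1$-bundle along $\mu_1$ with base projection $\mu_2$, has an associated gauge groupoid $\Gaug(P) \tto \BB_2$ whose arrows are equivalence classes $[p_1, p_2]$ with $\mu_2(p_1) = \mu_2(p_2)$ (i.e.\ pairs lying in the same $\mu_2$-fibre), modulo the diagonal $\Sigma_1$-action. The candidate isomorphism $\Psi: \Sigma_2 \to \Gaug(P)$ is given by
\[
\Psi(g) := [p, p \cdot g], \qquad p \in \mu_2^{-1}(s(g)),
\]
where $\cdot$ denotes the right $\Sigma_2$-action. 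Well-definedness follows from commutativity of the two actions: replacing $p$ by $h \cdot p$ yields $(h \cdot p, h \cdot (p \cdot g))$, which is in the same diagonal $\Sigma_1$-orbit. Bijectivity uses freeness and fibrewise transitivity of the $\Sigma_2$-action: for any $[p_1, p_2]$ in $\Gaug(P)$ the principality of the right $\Sigma_2$-action on $\mu_1$-fibres produces a unique $g$ with $p_2 = p_1 \cdot g$, giving the inverse. Smoothness and the groupoid axioms (compatibility with source, target, units, multiplication, inversion) follow by choosing local sections of $\mu_2$ and unravelling the definitions.

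The ``hard'' step in this plan is verifying the isomorphism $\Psi$ cleanly: the bookkeeping of how $\Sigma_1$ and $\Sigma_2$ interact on $P$ and how $\mu_1$, $\mu_2$ play opposite roles (quotient map versus moment map for each action) needs to be kept straight so that well-definedness and surjectivity use the right principality conditions. Once $\Psi$ is established as a Lie groupoid isomorphism, the result follows; no further argument is needed, since the bibundle $P$ itself then realises $\Sigma_2 \cong \Gaug(P)$ as required.
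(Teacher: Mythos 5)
Your overall strategy is the standard one, and it matches what the paper actually does: Theorem~\ref{thm:gauge_eq_morita} is stated there as background without proof, but the same construction appears (correctly) inside the proof of its Pfaffian upgrade, Theorem~\ref{prp:PME_is_gauge_construction}, via the map $P\rtimes \Sigma_2 \to P\times_{\mu_1}P$, $(p,g)\mapsto (p,p\cdot g)$, which descends to $\Sigma_2\cong \Gaug(P)$. Your ``if'' direction is fine.

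There is, however, a concrete error in your description of $\Gaug(P)$ in the ``only if'' direction, and it is exactly the bookkeeping you flagged as delicate. You take the arrows of $\Gaug(P)$ to be classes of pairs $(p_1,p_2)$ with $\mu_2(p_1)=\mu_2(p_2)$, i.e.\ the fibred product over the base $\BB_2$. This is wrong on two counts. First, the diagonal $\Sigma_1$-action is only defined on pairs lying in the same fibre of the \emph{moment map} $\mu_1$ (one needs $s(h)=\mu_1(p_1)=\mu_1(p_2)$ to act by $h$), so the quotient you describe does not make sense; the arrow space must be $(P\times_{\mu_1}P)/\Sigma_1$, with source and target then induced by $\mu_2$. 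Second, your own map $\Psi(g)=[p,p\cdot g]$ does not land in the set you defined: $\mu_2(p)$ and $\mu_2(p\cdot g)$ are $t(g)$ and $s(g)$, which differ for non-isotropy arrows, whereas $\mu_1(p)=\mu_1(p\cdot g)$ always holds since $\mu_1$ is invariant under the right $\Sigma_2$-action. Tellingly, your bijectivity argument (``principality of the right $\Sigma_2$-action on $\mu_1$-fibres produces a unique $g$ with $p_2=p_1\cdot g$'') only applies to pairs in the same $\mu_1$-fibre, so the rest of your proof implicitly uses the correct arrow space. Replacing $\mu_2(p_1)=\mu_2(p_2)$ by $\mu_1(p_1)=\mu_1(p_2)$ (and fixing the minor convention slip $p\in\mu_2^{-1}(s(g))$, which should be $\mu_2^{-1}(t(g))$ for a right action) repairs the argument; after that correction the well-definedness, bijectivity, and compatibility checks go through as you outline.
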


The following proposition, which is well known (e.g.\ the proof of Theorem 4.2 in~\cite{XUMORITAEQ}), was one of our starting points for this paper.
\begin{proposition}\label{prop:princ_ME}
%If $\Sigma_1\tto \BB_1$ and $\Sigma_2\tto \BB_2$ are Morita equivalent Lie groupoids, then there is a one to one correspondence between principal $\Sigma_1$-bundles and principal $\Sigma_2$-bundles
The categories of principal bundles of Morita equivalent groupoids are equivalent.
\end{proposition}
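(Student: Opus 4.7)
The plan is to mimic, in the purely groupoid-theoretic setting, exactly the construction already laid out in the proof of Theorem~\ref{prop:Pfaffian_principal_category} — in fact this Proposition is the non-Pfaffian shadow of that result, so the core recipe is the same but without the extra form-theoretic bookkeeping. Fix a principal bibundle
\[
\BB_1 \overset{\mu_1}{\lto} P \overset{\mu_2}{\to} \BB_2
\]
realising a Morita equivalence between $\Sigma_1 \tto \BB_1$ and $\Sigma_2 \tto \BB_2$. I would first construct a functor $F_P$ from principal $\Sigma_1$-bundles to principal $\Sigma_2$-bundles: given $\pi_1\colon P_1 \to M$ a principal $\Sigma_1$-bundle with moment map $\tau_1\colon P_1\to \BB_1$, form the fibered product $P_1 \times_{\BB_1} P$ (smooth because $\mu_1$ is a surjective submersion by Axiom~\ref{axiom_moment_map}), endow it with the diagonal left $\Sigma_1$-action $g\cdot(p_1,p)=(g\cdot p_1,\, g\cdot p)$, and set $F_P(P_1):=(P_1\times_{\BB_1}P)/\Sigma_1$. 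Since the left $\Sigma_1$-action on $P$ is free and proper and commutes with the right $\Sigma_2$-action, the right $\Sigma_2$-action on the second factor descends to a free, proper, principal action on the quotient, with moment map induced by $\mu_2\circ\pr_2$ and base map induced by $\pi_1\circ\pr_1$ (the fibers of the latter are precisely the $\Sigma_2$-orbits). Turning the right action into a left one via inversion yields the desired principal $\Sigma_2$-bundle over $M$.

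The action of $F_P$ on morphisms is forced: a morphism $\Psi\colon P_1\to P_1'$ of principal $\Sigma_1$-bundles induces $\Psi\times \id_P$, which is $\Sigma_1$-equivariant with respect to the diagonal actions and descends to a $\Sigma_2$-equivariant smooth map $F_P(\Psi)\colon F_P(P_1)\to F_P(P_1')$; functoriality is immediate. The opposite functor $F_{P^{\rm op}}$ is obtained by running the same construction with $P$ viewed as a $(\Sigma_2,\Sigma_1)$-bibundle (i.e.\ swapping the roles of the two moment maps and of the two actions).

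The heart of the proof is then to show that $F_{P^{\rm op}}\circ F_P$ and $F_P\circ F_{P^{\rm op}}$ are naturally isomorphic to the respective identity functors. I would exhibit the natural isomorphism on $P_1$ via the map
\[
\big((P_1\times_{\BB_1}P)/\Sigma_1\big)\times_{\BB_2}P \longrightarrow P_1,\qquad \big([p_1,p],\,p'\big)\longmapsto g\cdot p_1,
\]
where $g\in \Sigma_1$ is the unique arrow, existing because $P\to\BB_2$ is a principal $\Sigma_1$-bundle, such that $g\cdot p'=p$. One checks directly that this descends to a well-defined map out of the double quotient $F_{P^{\rm op}}(F_P(P_1))$, is $\Sigma_1$-equivariant, covers the identity on $M$, and is a diffeomorphism; naturality in $P_1$ is automatic. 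The same recipe, with the roles of $\Sigma_1$ and $\Sigma_2$ swapped, handles the other composition.

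The only non-formal part is the verification that the various quotient manifolds are smooth and that the descended actions remain free and proper — this is where Axiom~\ref{axiom_moment_map} on moment maps plus the freeness/properness of the given principal actions do the real work. Everything else is diagram chasing that unwinds the bibundle axioms. The main potential obstacle is keeping track of left-versus-right conventions when composing the two functors through the double quotient; I expect that writing down the inverse construction carefully (rather than trying to invert $F_P$ abstractly) will make the natural isomorphism transparent.
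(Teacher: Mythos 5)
Your construction is exactly the one the paper has in mind: it states this proposition as standard (citing Xu) and sketches precisely this tensor-product recipe --- $F_P(P_1)=(P_1\times_{\BB_1}P)/\Sigma_1$ with the descended $\Sigma_2$-action --- in the discussion preceding Theorem~\ref{prop:Pfaffian_principal_category}, so your argument is correct and follows the same route. The only slip is in your natural isomorphism, where with your conventions one should send $\bigl([p_1,p],p'\bigr)$ to $g^{-1}\cdot p_1$ (for the unique $g$ with $g\cdot p'=p$, one has $s(g^{-1})=\mu_1(p)=\tau_1(p_1)$, whereas $g\cdot p_1$ is not defined); this is exactly the left/right bookkeeping you already flagged and does not affect the substance.
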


\begin{remark}[Differentiable stacks]\label{rm:stacks}
% Proposition \ref{prop:princ_ME} can be rewritten as an isomorphism $B\G_1 \cong B\G_2$ between the differentiable stacks associated to any Lie groupoid. 
 Proposition \ref{prop:princ_ME} is classically used to show that two Lie groupoids $\G_1$ and $\G_2$ are Morita equivalent if and only if their associated {\it differentiable stacks} $B\G_1$ and $B\G_2$ are isomorphic (see e.g.\ \cite{STACKS}). This leads naturally to the rephrasement of the definition of a differentiable stack as a Lie groupoid up to Morita equivalence. 
\end{remark}

\subsection{Jet prolongations and the Cartan distribution}\label{app:jets}
Let $\pi:Y\to X$ be a surjective submersion between manifolds. Let $\sigma_1:U\subset X \to Y$ and $\sigma_2:V\subset X\to Y$ be smooth sections of $\pi$ defined on open subsets $U$, $V$ of $X$, and let $x\in U\cap V$. One says that $\sigma_1$ and $\sigma_2$ have {\bf contact of order $k$} at $x$ if their order $k$ Taylor polynomial at $x$ coincide. This defines an equivalence relation on the set of smooth sections defined locally around $x$, whose equivalence classes are denoted by
\[
j^k_x\sigma := \{\sigma':\ \sigma'\ \text{is a local section of } \pi,\ x\in {\rm dom}(\sigma),\ \sigma'\ \text{has contact of order } k\ \text{with } \sigma\ \text{at } x\}
\]
\begin{definition}\label{def:jet_prol}
Let $\pi:Y\to X$ be a surjective submersion between manifolds, and let $k\in \mathbb{N}$. The {\bf $k$-th jet prolongation (or jet bundle) of $\pi:Y\to X$}, denoted by $J^kY$, is the set
\[
J^kY := \{j^k_x\sigma:\ x\in X,\ \sigma\ \text{is a local section of } \pi\}
\]
\end{definition}
Observe that $J^0Y\cong Y$ canonically.
\begin{lemma}
For all $k\in \mathbb{N}$, there is a canonical manifold structure on $J^kY$ such that:
\begin{itemize}
\item the natural projections $J^kY\to J^hY$, $h\geq k$, are surjective submersions;
\item the natural projections $J^kY\to X$ are surjective submersions.
\end{itemize}
\end{lemma}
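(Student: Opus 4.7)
The natural strategy is to produce the smooth structure on $J^kY$ via adapted local trivialisations of the submersion $\pi: Y \to X$, check that the transition functions between such trivialisations are smooth, and then verify the two submersion properties in coordinates.

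First I would exploit the fact that, since $\pi: Y \to X$ is a surjective submersion, around any $y \in Y$ there exist adapted coordinates: an open $U \subset X$ around $\pi(y) = x$ with coordinates $(x^1,\dots,x^n)$ and an open neighbourhood $V \subset Y$ of $y$ with coordinates $(x^1,\dots,x^n,u^1,\dots,u^m)$ such that $\pi$ reads $(x,u) \mapsto x$. A local section $\sigma$ of $\pi$ defined near $x$ is then encoded by its vertical components $u^\alpha \circ \sigma$, which are genuine smooth $\RR$-valued functions on $U$. The key observation is that two local sections $\sigma_1, \sigma_2$ have contact of order $k$ at $x$ if and only if, in these coordinates, all partial derivatives up to order $k$ of $u^\alpha \circ \sigma_1$ and $u^\alpha \circ \sigma_2$ agree at $x$ (this is independent of the choice of adapted chart, but I won't need this intrinsically yet). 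This suggests coordinates on the subset of $J^kY$ sitting over $V$:
\[
\left(x^i,\ u^\alpha,\ u^\alpha_{i_1},\ u^\alpha_{i_1 i_2},\ \dots,\ u^\alpha_{i_1 \cdots i_k}\right)_{1 \leq i_1 \leq \cdots \leq i_l \leq n,\ 1 \leq \alpha \leq m,\ 0 \leq l \leq k},
\]
where $u^\alpha_{i_1\cdots i_l}(j^k_x\sigma) := \partial^l(u^\alpha \circ \sigma)/\partial x^{i_1}\cdots \partial x^{i_l}(x)$.

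Next I would show these charts are compatible. Given two adapted trivialisations $(x,u)$ and $(\tilde x,\tilde u)$ of $Y$, the transition $\tilde u^\alpha = \tilde u^\alpha(x,u)$, $\tilde x^i = \tilde x^i(x)$ is smooth, so the induced transformation of partial derivatives of a local section is obtained by iterated application of the chain rule: it is a smooth polynomial expression in the old jet coordinates and in finitely many partial derivatives of the transition functions, hence smooth. This equips $J^kY$ with the structure of a smooth manifold, and makes $J^kY \to J^hY$ (for $h \leq k$; I assume the direction in the statement is a typo) the map which simply forgets the coordinates $u^\alpha_{i_1\cdots i_l}$ for $l > h$. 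Its fibres are modelled on the product of symmetric tensor spaces $\bigoplus_{l = h+1}^{k} \mathrm{Sym}^l(\RR^n)^* \otimes \RR^m$, so in local coordinates it is a linear projection, which is plainly a surjective submersion. Similarly $J^kY \to X$ factors through $J^0Y = Y \to X$ (both already surjective submersions), so is itself a surjective submersion.

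The only genuinely subtle step is the well-posedness of the chart-change computation, i.e.\ checking that the coordinate change on $J^kY$ is globally well defined and truly smooth; this is a bookkeeping exercise in Fa\`a di Bruno's formula. Everything else is essentially formal from the local model, so I expect no conceptual obstacle beyond keeping the multi-index notation under control.
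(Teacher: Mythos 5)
Your proposal is correct and follows exactly the route the paper itself sketches: coordinatise $J^kY$ by the base point, the value of the section, and the Taylor coefficients in an adapted chart, then invoke the chain rule for compatibility of charts, after which the projections become linear forgetful maps and hence surjective submersions. You also rightly note that the index condition in the statement should read $h\leq k$.
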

Intuitively, one can put coordinates on $J^kY$ by assigning to each contact class $j^k_x\sigma\in J^kY$ the coordinates of $x\in X$ and of $\sigma(x)\in Y$ together with the coefficients of the order $k$ Taylor polynomial at $x$ of $\sigma$. The chain rule for differentiation ensures that such a prescription defines an atlas.
\begin{definition}\label{def:hol_sec}
Let $\pi:Y\to X$ be a surjective submersion. A local section $\psi:U\subset X\to J^kY$ of the jet prolongation $J^kY\to X$ is called {\bf holonomic} if there exists a local section $\sigma:U\to Y$ such that $\psi = j^k\sigma$. Here $j^k\sigma:U\to Y$ denotes the {\bf order $k$ prolongation of $\sigma$}:
\[
j^k\sigma:U \to J^kY,\quad x\mapsto j^k_x\sigma
\]
\end{definition} 
\begin{remark} The reader unfamiliar with jet spaces might want to notice that not all sections are holonomic. In general, when $\sigma:U\to J^k(M,N)$ is a local section, then for all $x\in U$ one has $\sigma(x) = j^k_x f_x$, where the maps $f_x:U_x\to N$ are some smooth functions defined on neighbourhoods $U_x$ of $x$. The section $\sigma$ is holonomic if and only if the family of representatives $\{f_x\}_{x\in U}$ can be replaced by a single function $f:U\to N$.

Holonomic sections of $J^kY\to X$ are therefore in bijection with local sections of $Y\to X$.
\end{remark}

\begin{definition}\label{def:diff_eq}
A geometric {\bf order $k$ differential equation} is an embedded submanifold $R \subset J^kY$. A {\bf local solution of $R$} is a local section $\sigma:U\to Y$ of $Y\to X$ such that the prolongation $j^k\sigma:U\to J^kY$ takes values in $R$.

Alternatively, in view of Definition \ref{def:hol_sec}, a solution of $R \subset J^kY$ is a holonomic section of $J^kY$ taking values in $R$.
\end{definition}

Intuitively, the jet formalism turns (regular) systems of PDEs into submanifolds. Solutions can be identified with a distinguished class of maps into such submanifolds: it turns out that these maps can be detected by means of a canonical distribution on $J^kY$.
\begin{definition}\label{def:cart-dist}
The {\bf Cartan distribution} on the jet prolongation $J^kY$ is the regular distribution $\CC^k$ whose integral sections are precisely the holonomic sections.
\end{definition}
Such a distribution indeed exists and is unique; it can be characterised as follows. 

\begin{proposition}\label{prp:cart-dist}
Let $\pi:Y\to X$ be a surjective submersion and $\pr:J^kY\to J^{k-1}Y$, $\pi^k:J^kY\to X$ denote the canonical projections. The Cartan distribution $\CC^k$ is given by the kernel of the vector-valued 1-form 
\[
\omega^k\in \Omega^1(J^kY, VJ^{k-1}Y),\quad \omega^k_{j^k_x \sigma}:=d \pr - d(j^{k-1} \sigma) \circ ds
\]
where $VJ^{k-1}Y$ is the vertical bundle of $s: J^{k-1}Y\to X$.

Moreover, the kernel of $\pr$ equals the intersection of $\ker(\omega^k)$ with $VJ^{k-1}Y$.
\end{proposition}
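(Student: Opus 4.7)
My plan is to verify the four ingredients separately: well-definedness of $\omega^k$, that its image lies in $VJ^{k-1}Y$, the equality $\ker(\omega^k) = \CC^k$, and the final kernel decomposition.

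First I would check that the defining formula $\omega^k_{j^k_x\sigma}(v) := d\pr(v) - d_x(j^{k-1}\sigma)(ds(v))$ is independent of the representative $\sigma$. If $j^k_x\sigma = j^k_x\sigma'$, then in local coordinates the order-$k$ Taylor polynomial of $\sigma$ and $\sigma'$ at $x$ coincide; since the coordinate functions on $J^{k-1}Y$ involve only derivatives of order $\leq k-1$, their first derivatives along $X$ at $x$ involve only derivatives of $\sigma$ of order $\leq k$, so $d_x(j^{k-1}\sigma) = d_x(j^{k-1}\sigma')$. Next, to see that $\omega^k(v)\in VJ^{k-1}Y = \ker(ds)$, it is enough to apply $ds$ to the defining formula: $ds(d\pr(v))=d(s\circ \pr)(v)=d\pi^k(v)=ds(v)$ and $ds(d(j^{k-1}\sigma)(ds(v)))=d(s\circ j^{k-1}\sigma)(ds(v))=ds(v)$, so the two terms cancel.

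Next I would prove that $\ker(\omega^k)=\CC^k$. One inclusion is the direct check that $(j^k\sigma)^*\omega^k=0$ for any local section $\sigma$: for $v\in T_xU$, $d\pr(d(j^k\sigma)(v))=d(\pr\circ j^k\sigma)(v)=d(j^{k-1}\sigma)(v)$, while $ds(d(j^k\sigma)(v))=v$, so the two summands agree. For the converse I would pass to standard jet coordinates $(x^i, y^\alpha_I)$ on $J^kY$, $|I|\leq k$. In these coordinates one verifies that $\omega^k$ takes the familiar form $\sum_{|I|<k}(dy^\alpha_I - y^\alpha_{I+i}\,dx^i)\otimes \partial_{y^\alpha_I}$. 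A section $\psi(x)=(x, y^\alpha_I(x))$ of $\pi^k$ with $\psi^*\omega^k=0$ then satisfies $\partial y^\alpha_I/\partial x^i=y^\alpha_{I+i}$ for all $|I|<k$; an induction on $|I|$ starting from $I=0$ shows $y^\alpha_I(x) = \partial_I y^{\alpha}(x)$, so $\psi=j^k\sigma$ where $\sigma(x)=(x,y^\alpha(x))$. Constancy of the rank of $\omega^k$ follows from the same coordinate expression.

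For the final kernel identification, note that $\pi^k=s\circ \pr$, so every $\pr$-vertical vector is automatically $\pi^k$-vertical; hence $v\in \ker(d\pr)$ implies $ds(v)=0$, and then $\omega^k(v)=d\pr(v)-d(j^{k-1}\sigma)(0)=0$, proving $\ker(d\pr)\subseteq \ker(\omega^k)\cap \ker(d\pi^k)$. Conversely, if $ds(v)=0$ and $\omega^k(v)=0$, then $d\pr(v)=d(j^{k-1}\sigma)(0)=0$, giving the reverse inclusion.

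I expect the main obstacle to be the converse direction of $\ker(\omega^k)=\CC^k$: the coordinate-free statement hides an inductive argument on the jet order, which is cleanest to run in local coordinates. The well-definedness check, although conceptually easy, also requires some care because the expression uses a representative $\sigma$ through $j^{k-1}\sigma$; the argument rests on the fact that the first-order behaviour along $X$ of $j^{k-1}\sigma$ at $x$ depends only on $j^k_x\sigma$, which is precisely the content of the standard ``holonomy'' identity $j^1(j^{k-1}\sigma)(x)$ lives in the holonomic embedding $J^kY\hookrightarrow J^1 J^{k-1}Y$.
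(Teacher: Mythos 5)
Your proof is correct. The paper itself gives no argument for this proposition and simply defers to the standard references on jet bundles (Saunders, Vinogradov et al.); what you have written is precisely the classical proof found there -- well-definedness via the fact that $d_x(j^{k-1}\sigma)$ depends only on $j^k_x\sigma$, the contact-form expression $\sum_{|I|<k}(dy^\alpha_I - y^\alpha_{I+i}\,dx^i)\otimes\partial_{y^\alpha_I}$ in jet coordinates, and the induction on $|I|$ for the converse inclusion. The only caveat is notational: the paper writes $ds$ for the differential of the base projection on $J^kY$ (i.e.\ $d\pi^k$) and states the last claim as an intersection with $VJ^{k-1}Y$ where it means the $\pi^k$-vertical bundle of $J^kY$; your reading, $\ker(d\pr)=\ker(\omega^k)\cap\ker(d\pi^k)$, is the intended one and your verification of it is correct.
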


See, for example,~\cites{SAUNDERS, Vinogradovetal99} for more details on jets and the geometric studies of PDEs.

\addcontentsline{toc}{section}{References}% adds the bibliography in the table of contents

\printbibliography

@article{CATTMARIUSMARIA,
	author = {Cattafi, Francesco and Crainic, Marius and Salazar, Mar{\'\i}a Amelia},
	doi = {10.4171/lem/66-1/2-10},
	issn = {0013-8584},
	journal = {L{\rq}Enseignement Math{\'e}matique},
	month = oct,
	number = {1},
	pages = {187--250},
	publisher = {European Mathematical Society Publishing House},
	title = {From PDEs to Pfaffian fibrations},
	url = {http://dx.doi.org/10.4171/LEM/66-1/2-10},
	volume = {66},
	year = {2020}
}

@phdthesis{MARIA,
	archiveprefix = {arXiv},
	author = {Salazar, Mar{\'\i}a Amelia},
	eprint = {1306.1164},
	primaryclass = {math.DG},
	school = {Universiteit Utrecht},
	title = {Pfaffian groupoids},
	year = {2013}
}

@phdthesis{ORI,
	author = {Yudilevich, Ori},
	school = {Universiteit Utrecht},
	title = {Lie pseudogroups {\`a} la Cartan from a modern perspective},
	year = {2016}
}

@phdthesis{FRANCESCO,
	author = {Cattafi, Francesco},
	school = {Universiteit Utrecht},
	title = {A general approach to almost structures in geometry},
	year = {2020}
}

@article{MARIUSRUI,
	author = {Crainic, Marius and Fernandes, Rui Loja},
	doi = {10.4007/annals.2003.157.575},
	fjournal = {Annals of Mathematics. Second Series},
	issn = {0003-486X},
	journal = {Ann. of Math. (2)},
	mrclass = {58H05 (17B37 17B99 22A22 53D17)},
	mrnumber = {1973056},
	mrreviewer = {Johannes Huebschmann},
	number = {2},
	pages = {575--620},
	title = {Integrability of {L}ie brackets},
	url = {https://doi.org/10.4007/annals.2003.157.575},
	volume = {157},
	year = {2003}
}

@book{SHARPE,
	author = {Sharpe, Richard W.},
	isbn = {978-0-387-94732-7},
	publisher = {Springer-Verlag New York},
	title = {Differential geometry - Cartan's Generalization of Klein's Erlangen Program},
	year = 1997
}

@book{CAPSLOVAK,
	author = {{\v C}ap, Andreas and Slov{\'a}k, Jan},
	isbn = {978-0-8218-2681-2},
	publisher = {AMS},
	title = {Parabolic geometries: background and general theory},
	year = {2009}
}

@misc{AccCr2020,
	archiveprefix = {arXiv},
	author = {Accornero, Luca and Crainic, Marius},
	eprint = {2012.07777},
	keywords = {own},
	primaryclass = {math.DG},
	title = {Haefliger's differentiable cohomology},
	year = {2020}
}

@phdthesis{JANEZ,
	archiveprefix = {arXiv},
	author = {Mr{\v c}un, Janez},
	eprint = {math/0506484},
	primaryclass = {math.DG},
	school = {Universiteit Utrecht},
	title = {Stability and invariants of Hilsum-Skandalis maps},
	year = {1996}
}

@book{STERNBERG,
	author = {Sternberg, Shlomo},
	mrclass = {53.00},
	mrnumber = {0193578},
	mrreviewer = {W. F. Pohl},
	pages = {xv+390},
	publisher = {Prentice-Hall Inc. Englewood Cliffs, N.J.},
	title = {Lectures on differential geometry},
	year = {1964}
}

@phdthesis{JOAO,
	author = {Mestre, Jo{\~a}o Nuno},
	school = {Universiteit Utrecht},
	title = {Differentiable stacks: stratifications, measures and deformations},
	year = {2016}
}

@book{MOERDIJK,
	author = {Moerdijk, Ieke and Mr{\v c}un, Janez},
	doi = {10.1017/CBO9780511615450},
	isbn = {0-521-83197-0},
	mrclass = {58H05 (17B99 57R30)},
	mrnumber = {2012261},
	mrreviewer = {Jan Kubarski},
	pages = {x+173},
	publisher = {Cambridge University Press, Cambridge},
	series = {Cambridge Studies in Advanced Mathematics},
	title = {Introduction to foliations and {L}ie groupoids},
	url = {https://doi.org/10.1017/CBO9780511615450},
	volume = {91},
	year = {2003}
}

@article{ALEKSEEVSKYMICHOR,
	author = {Alekseevsky, Dmitri V. and Michor, Peter W.},
	fjournal = {Publicationes Mathematicae Debrecen},
	issn = {0033-3883},
	journal = {Publ. Math. Debrecen},
	mrclass = {53C05 (53C10)},
	mrnumber = {1362298},
	mrreviewer = {Fernando Etayo Gordejuela},
	number = {3-4},
	pages = {349--375},
	title = {Differential geometry of {C}artan connections},
	volume = {47},
	year = {1995}
}

@article{MATIAS,
	author = {del Hoyo, Matias},
	doi = {10.4171/pm/1930},
	issn = {0032-5155},
	journal = {Portugaliae Mathematica},
	number = {2},
	pages = {161--209},
	publisher = {European Mathematical Society Publishing House},
	title = {Lie groupoids and their orbispaces},
	url = {http://dx.doi.org/10.4171/PM/1930},
	volume = {70},
	year = {2013}
}

@article{FRANCESCOPAPER,
	author = {Cattafi, Francesco},
	doi = {10.1016/j.difgeo.2021.101722},
	fjournal = {Differential Geometry and its Applications},
	issn = {0926-2245},
	journal = {Differential Geom. Appl.},
	mrclass = {58A10 (53C05 53C10 53C15 58H05)},
	mrnumber = {4209702},
	pages = {101722, 20},
	title = {Cartan geometries and multiplicative forms},
	url = {https://doi.org/10.1016/j.difgeo.2021.101722},
	volume = {75},
	year = {2021}
}

@incollection{XUMORITAEQ,
	author = {Xu, Ping},
	booktitle = {Symplectic geometry, groupoids, and integrable systems ({B}erkeley, {CA}, 1989)},
	doi = {10.1007/978-1-4613-9719-9\_20},
	mrclass = {46L87 (46L05 58B30 58F06 81S10)},
	mrnumber = {1104935},
	mrreviewer = {Jean N. Renault},
	pages = {291--311},
	publisher = {Springer, New York},
	series = {Math. Sci. Res. Inst. Publ.},
	title = {Morita equivalent symplectic groupoids},
	url = {https://doi.org/10.1007/978-1-4613-9719-9_20},
	volume = {20},
	year = {1991}
}

@book{SAUNDERS,
	author = {Saunders, David J.},
	collection = {London Mathematical Society Lecture Note Series},
	doi = {10.1017/CBO9780511526411},
	place = {Cambridge},
	publisher = {Cambridge University Press},
	series = {London Mathematical Society Lecture Note Series},
	title = {The Geometry of Jet Bundles},
	year = {1989}
}

@article{HAEFLIGERGAMMASTRUCTURES,
	author = {Haefliger, Andr{\'e}},
	journal = {Commentarii mathematici Helvetici},
	number = {32},
	pages = {248--329},
	title = {Structures feuillet{\'e}es et cohomologie {\`a} valeur dans un faisceau de groupoides},
	year = {1958}
}

@phdthesis{LUCA,
	author = {Accornero, Luca},
	school = {Universiteit Utrecht},
	title = {Topics on Lie pseudogroups},
	year = {2021}
}

@article{STACKS,
	author = {Xu, Ping and Behrend, Kai},
	journal = {Journal of Symplectic Geometry},
	number = {3},
	pages = {285 -- 341},
	title = {Differentiable stacks and gerbes},
	volume = {9},
	year = {2011}
}

@book{MACKENZIE,
	author = {Mackenzie, Kirill C. H.},
	collection = {London Mathematical Society Lecture Note Series},
	doi = {10.1017/CBO9781107325883},
	place = {Cambridge},
	publisher = {Cambridge University Press},
	series = {London Mathematical Society Lecture Note Series},
	title = {General Theory of Lie Groupoids and Lie Algebroids},
	year = {2005}
}

@misc{INPROGRESS,
	author = {Accornero, Luca and Cattafi, Francesco and Crainic, Marius and Salazar, Mar{\'\i}a Amelia},
	note = {To appear},
	title = {Pseudogroups and geometric structures},
	year = {2022}
}

@misc{CRAINICYUDILEVICH,
	archiveprefix = {arXiv},
	author = {Crainic, Marius and Yudilevich, Ori},
	eprint = {1801.00370},
	note = {To appear},
	primaryclass = {math.DG},
	title = {Lie pseudogroups {\`a} la Cartan},
	year = {2022}
}

@article{SingerSternberg65,
	author = {Singer, Isadore M. and Sternberg, Shlomo},
	doi = {10.1007/BF02787690},
	fjournal = {Journal d'Analyse Math{\'e}matique},
	issn = {0021-7670},
	journal = {J. Analyse Math.},
	mrclass = {57.70 (22.00)},
	mrnumber = {217822},
	mrreviewer = {M. Kuranishi},
	pages = {1--114},
	title = {The infinite groups of {L}ie and {C}artan. {I}. {T}he transitive groups},
	url = {https://doi.org/10.1007/BF02787690},
	volume = {15},
	year = {1965}
}

@misc{MEINRENKEN,
	author = {Meinrenken, Eckhart},
	note = {Lecture Notes},
	primaryclass = {math.DG},
	title = {Lectures on Lie groupoids and Lie algebroids},
	year = {2017}
}

@article{GUILLEMINSTERNBERG,
	author = {Guillemin, Victor W and Sternberg, Shlomo},
	journal = {Bulletin of the American Mathematical Society},
	number = {1},
	pages = {16--47},
	publisher = {American Mathematical Society},
	title = {An algebraic model of transitive differential geometry},
	volume = {70},
	year = {1964}
}

@article{CARTANINFINITEGROUPS,
	author = {Cartan, {\'E}lie},
	fjournal = {Annales Scientifiques de l'{\'E}cole Normale Sup{\'e}rieure. Troisi{\`e}me S{\'e}rie},
	issn = {0012-9593},
	journal = {Ann. Sci. {\'E}cole Norm. Sup. (3)},
	mrclass = {DML},
	mrnumber = {1509040},
	pages = {153--206},
	title = {Sur la structure des groupes infinis de transformation},
	url = {http://www.numdam.org/item?id=ASENS_1904_3_21__153_0},
	volume = {21},
	year = {1904}
}

@article{IEKEHAEFLIGER,
	author = {Moerdijk, Ieke},
	doi = {10.1016/S0040-9383(97)00053-0},
	issn = {0040-9383},
	journal = {Topology},
	number = {4},
	pages = {735--741},
	title = {Proof of a conjecture of A. Haefliger},
	url = {https://www.sciencedirect.com/science/article/pii/S0040938397000530},
	volume = {37},
	year = {1998}
}

@misc{AccorneroCattafi2,
	archiveprefix = {arXiv},
	author = {Accornero, Luca and Cattafi, Francesco},
	eprint = {2211.16639},
	primaryclass = {math.DG},
	title = {A groupoid approach to transitive differential geometry},
	year = {2022}
}

@incollection{Ehresmann53,
	author = {Ehresmann, Charles},
	booktitle = {Colloque de topologie et g{\'e}om{\'e}trie diff{\'e}rentielle, {S}trasbourg, 1952, no. 11},
	mrclass = {53.0X},
	mrnumber = {0061454},
	mrreviewer = {H. Samelson},
	pages = {16},
	publisher = {La Biblioth{\`e}que Nationale et Universitaire de Strasbourg},
	title = {Introduction {\`a} la th{\'e}orie des structures infinit{\'e}simales et des pseudogroupes de {L}ie},
	year = {1953}
}

@article{CrainicSalazarStruchiner15,
	author = {Crainic, Marius and Salazar, Mar{\'\i}a Amelia and Struchiner, Ivan},
	doi = {10.1007/s00209-014-1398-z},
	fjournal = {Mathematische Zeitschrift},
	issn = {0025-5874},
	journal = {Math. Z.},
	mrclass = {58H05 (53C05 58H10)},
	mrnumber = {3318255},
	mrreviewer = {I. Kol{\'a}{\v r}},
	number = {3-4},
	pages = {939--979},
	title = {Multiplicative forms and {S}pencer operators},
	url = {https://doi.org/10.1007/s00209-014-1398-z},
	volume = {279},
	year = {2015}
}

@book{Vinogradovetal99,
	author = {Bocharov, A. V. and Chetverikov, V. N. and Duzhin, S. V. and {Khor'kova}, N. G. and {Krasil'shchik}, I. S. and Samokhin, A. V. and Torkhov, Yu. N. and Verbovetsky, A. M. and Vinogradov, A. M.},
	doi = {10.1090/mmono/182},
	isbn = {0-8218-0958-X},
	mrclass = {58J70 (34C14 35A30 35L65 37K10 37K35 58A20 58J72)},
	mrnumber = {1670044},
	mrreviewer = {Juha Pohjanpelto},
	note = {Edited and with a preface by Krasil'shchik and Vinogradov, Translated from the 1997 Russian original by Verbovetsky [A. M. Verbovetski{\u\i}] and Krasil'shchik},
	pages = {xiv+333},
	publisher = {American Mathematical Society, Providence, RI},
	series = {Translations of Mathematical Monographs},
	title = {Symmetries and conservation laws for differential equations of mathematical physics},
	url = {https://doi.org/10.1090/mmono/182},
	volume = {182},
	year = {1999}
}

@book{LieEngel88TrGroupAll,
	author = {Engel, Friedrich and Lie, Sophus},
	publisher = {{Teubner, B. G.}},
	title = {Theorie der Transformationsgruppen, volume 1-3},
	year = {{1888-93}}
}

\end{document}